\documentclass[11pt,twoside]{amsart}
\usepackage{amsmath,amsfonts,amscd,amsthm}
\usepackage{amssymb}
\usepackage[flushmargin]{footmisc}
\usepackage{color}                    
\usepackage{esvect}

\newtheorem{remark}{Remark}
\newcommand{\dbar}{\ensuremath{\overline\partial}}
\newcommand{\dbarstar}{\ensuremath{\overline\partial^*}}
\newcommand{\C}{\ensuremath{\mathbb{C}}}

\newcommand{\R}{\ensuremath{\mathbb{R}}}

\DeclareFontFamily{U}{mathx}{\hyphenchar\font45}
\DeclareFontShape{U}{mathx}{m}{n}{
      <5> <6> <7> <8> <9> <10>
      <10.95> <12> <14.4> <17.28> <20.74> <24.88>
      mathx10
      }{}
\DeclareSymbolFont{mathx}{U}{mathx}{m}{n}
\DeclareFontSubstitution{U}{mathx}{m}{n}
\DeclareMathAccent{\widecheck}{0}{mathx}{"71}
\DeclareMathAccent{\wideparen}{0}{mathx}{"75}

\def\dom{\operatorname{Dom}\,}

\def\eps{\varepsilon}
\def\tf{\wt{f}}
\def\hf{\widehat{f}}
\def\cf{\widecheck{f}}

\def\range{\operatorname{\mathcal R}\,}

\def\omz{\Omega} 
\def\va{\vartheta}

\makeatletter
\newcommand{\sumprime}{\if@display\sideset{}{'}\sum%
	\else\sum'\fi}
\makeatother

\newtheorem{thm}{Theorem}[section]

\newtheorem{lem}[thm]{Lemma}
\newtheorem{cor}[thm]{Corollary}

\theoremstyle{definition}

\theoremstyle{remark}


\numberwithin{equation}{section}

\oddsidemargin = 0.5cm \evensidemargin= 0.5cm \textwidth=15cm
\topmargin=0.5cm \textheight=21.5cm

\pagestyle{myheadings}

\providecommand\ufootnote[1]{{\let\thefootnote\relax\footnote[0]{#1}}}

\newcommand{\dc}{\mathcal D}
\newcommand{\ec}{\mathcal E}

\newcommand{\nb}{\mathbb N}

\newcommand{\B}{\mathbb B}

\newcommand{\hb}{\mathbb H}

\newcommand{\N}{\mathbb{N}}

\newcommand{\ol}{\overline}

\newcommand{\wt}{\widetilde}

\DeclareMathOperator{\supp}{supp} 
 
\DeclareMathOperator{\rea}{Re}

\DeclareMathOperator{\dist}{dist} 
 \DeclareMathOperator{\Dom}{Dom}

\begin{document}

\title[Spectral Stability of the $\dbar$-Neumann Laplacian]{Spectral Stability of the $\dbar-$Neumann Laplacian: Domain Perturbations}

\author{Siqi Fu and Weixia Zhu}

\address{Department of Mathematical Sciences,
	Rutgers University, Camden, NJ 08102, USA} \email{sfu@rutgers.edu}

\address{School of Mathematical Sciences,
   Xiamen University, Xiamen, Fujian 361005, CHN}\email{zhuvixia@stu.xmu.edu.cn}

\begin{abstract} We study spectral stability of the $\bar\partial$-Neumann Laplacian on a bounded domain in $\C^n$ when the underlying domain is perturbed. In particular, we establish upper semi-continuity properties for the variational eigenvalues of the $\bar\partial$-Neumann Laplacian on bounded pseudoconvex domains in $\C^n$, lower semi-continuity properties on pseudoconvex domains that satisfy property ($P$), and quantitative estimates on smooth bounded pseudoconvex domains of finite D'Angelo type in $\C^n$.
	
\bigskip

\noindent{{\sc Mathematics Subject Classification} (2010): 32W05, 32G05, 35J25, 35P15.}	

	\smallskip
	
	\noindent{{\sc Keywords}: The $\dbar$-Neumann Laplacian, spectrum, stability, pseudoconvex domain, property~($P$), finite type condition.}
\end{abstract}

\maketitle

\tableofcontents

\section{Introduction}\label{sec:intro}

The $\dbar$-Neumann Laplacian is a prototype of an elliptic operator with non-coercive
boundary conditions~(\cite{KohnNirenberg65}). Since the fundamental work of Kohn \cite{Kohn63}
and H\"{o}rmander \cite{Hormander65}, it has been known that existence and regularity of the $\dbar$-Neumann Laplacian are closely connected to the boundary geometry of the underlying domains (see, e.g.,  \cite{FollandKohn72, BoasStraube99, DangeloKohn99, ChenShaw99, Straube10} for expositions on the subject). Spectral behavior of the $\dbar$-Neumann Laplacian has also been shown to be sensitive to the geometry of the domains. Positivity of the $\dbar$-Neumann can be used to characterize pseudoconvexity (see \cite{Fu08, FLS17} and references therein).  Spectral discreteness of the $\dbar$-Neumann Laplacian can be used to determine whether the boundary of a convex domain in $\C^n$ contains a complex variety (\cite{FuStraube98, FuStraube01}) and whether the boundary of a smooth bounded pseudoconvex Hartogs domain in $\C^2$ satisfies property ($P$), a potential theoretic property introduced by Catlin~\cite{Catlin84} (see \cite{FuStraube02, ChristFu05}).  Asymptotic behavior of the eigenvalues can be used to establish whether a smooth bounded pseudoconvex domain in $\C^2$ is of finite type (\cite{Fu08}). 

In physical sciences, exact values of quantities are oftentimes difficult--in some cases, impossible--to obtain and approximate values are observed and utilized instead. It is thus important to study how these quantities are affected when there are small perturbations of other parameters.  Spectral stability of the classical Dirichlet and Neumann Laplacians on domains in $\R^n$ has been studied extensively in literatures (see, e.g., \cite{Fuglede99, Davies00, BL07} and references therein).  In this paper, we study spectral stability of the $\dbar$-Neumann Laplacian on a bounded domain $\Omega$ in $\C^n$ when the underlying domain is perturbed.  There are several ways to measure spectral stability. Our focus here is on the variational eigenvalues.  The $k^{\rm th}$-variational eigenvalue $\lambda^q_k(\Omega)$ of the $\dbar$-Neumann Laplacian $\Box$ on $(0, q)$-forms ($1\le q\le n-1$) on $\Omega$ are defined through the min-max principle and they are bona fide eigenvalues when the spectrum is discrete (see Section~\ref{sec:prelim} below). We first establish the following upper semi-continuity property of the variational eigenvalues of the $\dbar$-Neumann Laplacian on a pseudoconvex domain.

\begin{thm}\label{main1}
	Let	$\omz_1$ be a bounded pseudoconvex domain in $\C^n$ with $C^1$-smooth boundary. Let $k$ be a positive integer. For any $\eps>0$, there exists $\delta>0$ such that for any pseudoconvex domain $\Omega_2$,
	\begin{equation}
	\lambda^q_k(\Omega_2)\le \lambda^q_k(\Omega_1)+\eps, \quad 1\le q\le n-1,
	\end{equation}	
	provided $d_H(\Omega_1, \Omega_2)<\delta$, where $d_H$ denotes the Hausdorff distance between the domains.
\end{thm}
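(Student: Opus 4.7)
The plan is to apply the min-max characterization of the variational eigenvalues,
\[
\lambda^q_k(\Omega) \;=\; \inf_{\dim V = k}\;\sup_{u \in V \setminus \{0\}} \frac{\|\dbar u\|^2 + \|\dbarstar u\|^2}{\|u\|^2},
\]
where $V$ ranges over $k$-dimensional subspaces of $\Dom(\dbar)\cap\Dom(\dbarstar) \subset L^2_{(0,q)}(\Omega)$. To prove $\lambda^q_k(\Omega_2) \le \lambda^q_k(\Omega_1) + \eps$, it suffices to exhibit a $k$-dimensional subspace inside $\Dom_{\Omega_2}(\dbar)\cap\Dom_{\Omega_2}(\dbarstar)$ whose Rayleigh quotient does not exceed $\lambda^q_k(\Omega_1) + \eps$.

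I start by choosing an $\eps/4$-almost-optimal $k$-dimensional subspace $V_1 \subset \Dom_{\Omega_1}(\dbar)\cap\Dom_{\Omega_1}(\dbarstar)$. By the standard density result for the $\dbar$-Neumann problem on $C^1$-smooth pseudoconvex domains, I may take the elements of $V_1$ to be smooth $(0,q)$-forms on $\ol{\Omega_1}$ whose normal components vanish on $\partial\Omega_1$. Using the $C^1$-smoothness of $\partial\Omega_1$, for small $t > 0$ I then construct a $C^1$-diffeomorphism $\Phi_t \colon \C^n \to \C^n$ equal to the identity outside an $O(t)$-neighborhood of $\partial\Omega_1$, with $\|\Phi_t - \id\|_{C^1} = O(t)$, mapping $\Omega_1$ onto an inner $C^1$-smooth approximation $\Omega_1^{(t)} \Subset \Omega_1$ obtained by pushing $\partial\Omega_1$ inward by $t$ along the outward normal. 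The pushforward $V_1^{(t)} = (\Phi_t)_{*} V_1$ consists of smooth forms on $\ol{\Omega_1^{(t)}}$ satisfying the $\dbarstar$-boundary condition on $\partial\Omega_1^{(t)}$ (normals are carried to normals by $\Phi_t$). Although $\dbar$ and $\dbarstar$ do not commute exactly with the non-holomorphic pushforward, the commutators are $O(t)$, so the Rayleigh quotients of elements of $V_1^{(t)}$ on $\Omega_1^{(t)}$ do not exceed $\lambda^q_k(\Omega_1) + \eps/2$ once $t$ is sufficiently small.

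Now choose $\delta > 0$ small enough (depending on $t$) that $d_H(\Omega_1, \Omega_2) < \delta$ forces $\Omega_1^{(t)} \Subset \Omega_2$ and $\mathrm{Vol}(\Omega_2 \setminus \Omega_1^{(t)}) = O(t)$. I extend each $u \in V_1^{(t)}$, smooth on $\ol{\Omega_1^{(t)}}$, across $\partial\Omega_1^{(t)}$ by a Seeley-type extension to a smooth form $\wt{u}$ on $\Omega_2$, bounded in $C^1$ by $u$; the $L^2$-norm and the quantity $\|\dbar \wt{u}\|^2 + \|\dbarstar \wt{u}\|^2$ on $\Omega_2$ agree with those of $u$ on $\Omega_1^{(t)}$ up to $O(t)$ errors from the thin shell. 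The main obstacle is verifying that $\wt{u} \in \Dom_{\Omega_2}(\dbarstar)$: since $\partial\Omega_2$ may be rough (only pseudoconvex, with no regularity assumed), the $\dbarstar$-boundary condition there is not automatic from the construction. My plan is to exploit pseudoconvexity of $\Omega_2$---replacing $\wt{u}$ by its orthogonal projection onto $\Dom_{\Omega_2}(\dbar)\cap\Dom_{\Omega_2}(\dbarstar)$, regarded as a closed subspace of the graph Hilbert space---and to argue that the projection correction is small because the normal part of $\wt{u}$ near $\partial\Omega_2$ is $O(t+\delta)$, inherited from the $\dbarstar$-condition on $\partial\Omega_1^{(t)}$ together with the proximity $d_H(\Omega_1,\Omega_2) < \delta$. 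Making this correction quantitative, so that the corrected $k$-dimensional test subspace on $\Omega_2$ has Rayleigh quotient at most $\lambda^q_k(\Omega_1) + \eps$ uniformly in $1 \le q \le n-1$, is the technical heart of the argument.
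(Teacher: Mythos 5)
The step you yourself flag as ``the technical heart'' is in fact a genuine gap, and the mechanism you propose for it cannot work as stated. In Theorem~\ref{main1} the domain $\Omega_2$ is only assumed pseudoconvex, with no boundary regularity at all, so ``the normal part of $\wt u$ near $\partial\Omega_2$'' is not a meaningful quantity: the description of $\Dom(\dbarstar)$ by vanishing of the normal component is available only for forms of class $C^1$ up to a $C^1$ boundary, and in general membership in $\Dom(\dbarstar_{\Omega_2})$ has to be verified from the duality definition \eqref{eq:dbarstar-dom}. Moreover, even when $\partial\Omega_2$ is smooth, projecting $\wt u$ orthogonally (in the graph norm of $\dbar\oplus\vartheta$) onto the closed subspace $\Dom(\dbar_{\Omega_2})\cap\Dom(\dbarstar_{\Omega_2})$ gives you no estimate by itself; what you need is a quantitative bound on the graph-norm \emph{distance} from $\wt u$ to that subspace, and smallness of the normal component near the boundary does not obviously yield such a bound. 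That distance estimate is precisely the difficulty the whole theorem turns on, and your outline does not supply any tool for it. (Secondary issues: the pushforward of a $(0,q)$-form under a non-holomorphic $C^1$ diffeomorphism is not a $(0,q)$-form and does not carry the $\dbarstar$-boundary condition exactly, and with only $C^1$ boundary the density of forms smooth on $\ol{\Omega_1}$ in $\Dom(Q_{\Omega_1})$ in graph norm is itself delicate; the paper avoids needing either fact.)

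For comparison, the paper's proof treats the two halves of the quadratic form asymmetrically and never has to check a boundary condition on $\Omega_2$ by hand. For the $\dbarstar$-part it uses that $\dbarstar$ is the \emph{minimal} closed extension of $\vartheta$: $f$ is approximated in $\|\cdot\|+\|\dbarstar\cdot\|$ by a compactly supported form $\phi$, which automatically lies in $\Dom(\dbarstar_{\Omega_2})$ once $d_H(\Omega_1,\Omega_2)$ is small, and the zero-extension $\wt f$ satisfies $\vartheta\wt f=\widetilde{\dbarstar f}\in L^2$. For the $\dbar$-part it pushes $f$ outward by finitely many translations subordinate to a partition of unity (the form $\widehat f_\delta$ of \eqref{hat}), which lies in $\Dom(\dbar)$ on the larger set. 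Then, instead of correcting by a projection, it defines the transition operator $T_\delta f=\dbarstar_2 N_2\dbar_2\widehat f_\delta+\dbar_2 N_2\vartheta\wt f$, which belongs to $\Dom(Q_{\Omega_2})$ \emph{by construction}, and the deviation of $T_\delta f$, $\dbar_2 T_\delta f$, $\dbarstar_2 T_\delta f$ from $f$, $\dbar f$, $\vartheta\wt f$ is controlled by H\"ormander's $L^2$ estimate, whose constant depends only on the diameter of $\Omega_2$ -- this is where pseudoconvexity of $\Omega_2$ enters quantitatively. Unless you replace your projection step with a device of this kind (or prove the missing distance estimate), the argument does not close.
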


Spectral theory of the $\dbar$-Neumann Laplacian differs substantially from that of the classical Laplacians because of  the non-coercive nature of the $\dbar$-Neumann boundary conditions.  Unlike the classical Dirichlet or Neumann Laplacian, spectral discreteness of the $\dbar$-Neumann Laplacian on a bounded domain $\Omega$ in $\C^n$ depends not only on the smoothness of the boundary but more importantly on geometric and potential properties of the boundary.  One difficulty in studying spectral stability of the $\dbar$-Neumann Laplacian is due to the fact that unlike the classical Neumann Laplacian, the restriction $f\vert_{\widehat\Omega}$ of a form $f\in\Dom(Q_\Omega)$, the domain of definition of the quadratic form associated with the $\dbar$-Neuman Laplacian on $\Omega$, need not belong to $\Dom(Q_{\widehat\Omega})$, where $\widehat\Omega$ is a subdomain of $\Omega$. Additionally, unlike the Dirichlet Laplacian, the extension of $f$ to zero outside of $\Omega$ does not make it belong to $\Dom(Q_{\widetilde\Omega})$ for a larger domain $\widetilde\Omega$.  To overcome these difficulties, we decompose a form in $\Dom(Q_\Omega)$ into tangential and normal components and treat them separately. Roughly speaking, the tangential component is treated as in the case of the Neumann Laplacian and the normal component is treated as in the case of the Dirichlet Laplacian.  

To establish the lower semi-continuity property of the variational eigenvalues, we will have to assume that the targeted domain satisfies property ($P$). Property ($P$) is a potential theoretic property introduced by Catlin \cite{Catlin84} to study compactness in the $\dbar$-Neumann problem. Kohn and Nirenberg \cite{KohnNirenberg65} showed that compactness of the $\dbar$-Green operator, the inverse of $\dbar$-Neuman Laplacian, implies exact global regularity of the $\dbar$-Neumann Laplacian.  (Compactness of the $\dbar$-Green operator is equivalent to spectral discreteness of the $\dbar$-Neumann Laplacian.)  Catlin showed that for  a bounded pseudoconvex domain with smooth boundary  in $\C^n$, property ($P$) implies compactness of the $\dbar$-Green operator. Straube showed that Catlin's theorem holds without the boundary smoothness assumption~(\cite{Straube97}). It remains an open problem whether or not the converse to Catlin's theorem is also true. 

\begin{thm}\label{main2}
	Let	$\omz_1$ be a bounded pseudoconvex domain in $\C^n$ with $C^1$  boundary that satisfies Property $(P_{q-1})$, $2\le q\le n-1$. Let $k$ be a positive integer. For any $\eps>0$, there exists $\delta>0$ such that for any pseudoconvex domain $\Omega_2$ whose $\dbar$-Neumann Laplacian has discrete spectrum on $(0, q)$-forms, 
	\begin{equation}
	\lambda^q_k(\Omega_2)\ge \lambda^q_k(\Omega_1)-\eps
	\end{equation}	
	provided $d_H(\Omega_1, \Omega_2)<\delta$.
\end{thm}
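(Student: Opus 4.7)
I would establish the equivalent estimate $\lambda_k^q(\Omega_1) \le \lambda_k^q(\Omega_2) + \eps$ via the min-max principle. Write $\Lambda = \lambda_k^q(\Omega_2)$, and use the discreteness hypothesis on $\Omega_2$ to extract an $L^2$-orthonormal system $\{u_1,\dots,u_k\}$ of eigenforms of $\Box_q^{\Omega_2}$ for the first $k$ eigenvalues; every $f$ in their span obeys $Q_{\Omega_2}(f,f) \le \Lambda \|f\|^2$. The aim is to construct a linear transfer map $T = T_\delta$ from $\Span\{u_j\}$ into $\Dom(Q_{\Omega_1})$ that is injective and satisfies $Q_{\Omega_1}(Tf, Tf) \le (\Lambda + \eps) \|Tf\|^2$ whenever $\delta = d_H(\Omega_1, \Omega_2)$ is small enough, so that applying the min-max formula to the test $k$-plane $\Span\{Tu_j\}$ yields the claim.

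The construction follows the tangential–normal template flagged in the introduction. Fix a $C^1$ defining function $\rho$ of $\Omega_1$ and, in a collar of $b\Omega_1$, decompose every $(0,q)$-form as $f = f^T + \dbar\rho \wedge f^N$, where $f^T$ carries no $\dbar\rho$-factor and $f^N$ is a $(0,q{-}1)$-form. On the interior region $\{\rho < -C\delta\}$, which the Hausdorff hypothesis forces to lie inside $\Omega_2$, I would set $Tf := f$. In the $O(\delta)$-collar the two pieces are transferred differently: $f^T$ is handled Neumann-style, by a Sobolev extension from $\Omega_1 \cap \Omega_2$ together with a smooth cutoff, which is consistent with the fact that $\Dom(Q_{\Omega_1})$ imposes no vanishing condition on the tangential part; the normal component $\dbar\rho \wedge f^N$ is handled Dirichlet-style, by multiplication by a cutoff that vanishes on $b\Omega_1$ and extension by zero across $b\Omega_2$ into $\Omega_1 \setminus \Omega_2$. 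These choices guarantee $Tf \in \Dom(Q_{\Omega_1})$.

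Expanding $\dbar(Tf)$ and $\dbarstar(Tf)$ yields an estimate of the form $Q_{\Omega_1}(Tf,Tf) \le Q_{\Omega_2}(f,f) + R_\delta(f)$, where $R_\delta(f)$ is controlled by $\delta^{-2}$ times the $L^2$ mass of $f$ on an $O(\delta)$-collar of $b\Omega_1$. To force $R_\delta(f) = o(1)$ uniformly on the finite-dimensional span, I would invoke the plurisubharmonic function $\phi$ furnished by property $(P_{q-1})$ on $\Omega_1$ for a large parameter $M$. The twisted $L^2$ inequality built from $\phi$ (rather than a hard cutoff) delivers a compactness-type estimate for $(0,q{-}1)$-forms on $\Omega_1$; applied to the $(0,q{-}1)$-valued normal component $f^N$ and combined with $Q_{\Omega_2}(f,f) \le \Lambda \|f\|^2$, it controls the normal contribution to $R_\delta$. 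A Sobolev trace argument, exploiting the $C^1$-regularity of $b\Omega_1$, handles the simpler tangential piece $f^T$.

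The main obstacle is precisely this uniform boundary-strip estimate: the eigenforms $u_j$ satisfy estimates tied to the geometry of $b\Omega_2$, whereas property $(P_{q-1})$ controls forms on $\Omega_1$, and the tangential–normal splitting is defined relative to $b\Omega_1$, hence is not intrinsically compatible with $\Dom(Q_{\Omega_2})$. Reconciling these pictures in the $\delta$-thin overlap region without losing a uniform constant, and in particular absorbing the $\delta^{-2}$ factor using the compactness estimate rather than any hypothesis on $\Omega_2$, is the technical heart of the argument. Once that is in place, a standard perturbation argument shows that $T$ remains injective on $\Span\{u_j\}$ for small $\delta$, and the min-max principle closes the proof.
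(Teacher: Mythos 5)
Your overall skeleton (transfer the span of the first $k$ eigenforms of $\Box_{\Omega_2}$, available by the discreteness hypothesis, into $\Dom(Q_{\Omega_1})$ and apply the min--max principle) matches the paper, but the construction you propose has a gap at exactly the point you flag as the ``technical heart,'' and as planned it cannot be closed. Your error term $R_\delta(f)$ carries a hard factor $\delta^{-2}$ coming from derivatives of cutoffs scaled to an $O(\delta)$-collar, and you intend to absorb it with the boundary-strip estimate furnished by property $(P_{q-1})$. But property $(P)$ is purely qualitative: for each $M$ it produces a neighborhood $U=U(M)$ of $\partial\Omega_1$ (whose size is not controlled in terms of $M$) and yields $\|f\|^2_{\Omega\cap U}\le (e/M)\,Q(f,f)$ once the strip lies in $U$; the smallness is uniform in $\delta$ but carries no rate in $\delta$. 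Since the quantifiers run ``first $M$, then $U(M)$, then $\delta$ small,'' the best you can get is $\delta^{-2}\|f\|^2_{A_{C\delta}}\le C\delta^{-2}\eps^2\,Q(f,f)$, which blows up as $\delta\to 0$ for fixed $\eps$; a rate such as $\|f\|^2_{A_\delta}\lesssim\delta^{2\alpha}Q(f,f)$ is exactly the quantitative property $(P_q^\alpha)$ of the finite-type setting (Section~5 of the paper), and even there the exponent is too weak to beat $\delta^{-2}$ without the additional Hardy-inequality and eigenform-regularity machinery used there. This is why the paper's proof of Theorem~\ref{cor3} (of which Theorem~\ref{main2} is an immediate consequence) never introduces $\delta^{-1}$-scaled cutoffs: it uses a fixed partition of unity, translates by $2\delta$ along finitely many normal directions (the push-out $\widehat f_\delta$ and push-in $\widecheck f_\delta$), composes with the Straube-type regularization $\dbarstar N\dbar(\cdot)+\dbar N\dbarstar(\cdot)$ on $\Omega_1$ to land in $\Dom(Q_{\Omega_1})$, and controls the translation errors uniformly over the eigenform family by proving that the zero-extensions form a pre-compact family in $L^2(\C^n)$ (property $(P)$ near the boundary plus interior ellipticity and Rellich) and invoking the Kolmogorov--Riesz theorem; the boundary-strip terms then occur only with $\delta$-independent constants and are made small by applying the property-$(P)$ estimate to $f_j$, $\dbar f_j$ and $\dbarstar f_j$ (the last being where $(P_{q-1})$ is genuinely used).

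A secondary problem is your treatment of the tangential piece: a Sobolev extension from $\Omega_1\cap\Omega_2$ presupposes $W^1$-regularity of the eigenforms up to the boundary, with extension constants uniform in the perturbation. Since $\Omega_2$ is only assumed bounded pseudoconvex with discrete spectrum, its eigenforms have no boundary regularity (interior ellipticity gives $W^1$ only on compact subsets, with constants blowing up like the inverse distance to the boundary), and the Lipschitz character of $\Omega_1\cap\Omega_2$ is not controlled; likewise, bounding the energy of the normal component $f^N$ by $Q_{\Omega_2}(f,f)$, as needed before you can even apply a strip estimate to it, requires $C^2$-smoothness of the distance function and is not available here. Both difficulties are avoided by the translation-plus-regularization construction and the compactness argument described above.
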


To establish the quantitative estimates, we further assume that the domains are of finite type. A notion of finite type was introduced by Kohn for smooth bounded pseudoconvex domains in $\C^2$ in connection with subellipticity of the $\dbar$-Neumann Laplacian~\cite{Kohn72}.   For domains in higher dimensions, a new finite type notion was introduced by D'Angelo~\cite{Dangelo82}: A smooth bounded domain in $\C^n$ is of finite type in the sense of D'Angelo if the normalized order of contact of complex analytic varieties with the boundary is finite. Catlin showed that for a smooth bounded pseudoconvex domain in $\C^n$, subellipticity of the $\dbar$-Neumann Laplacian is equivalent to the finite D'Angelo type~\cite{Catlin83, Catlin87}. Here we study spectral stability of the $\dbar$-Neumann Laplacian on such domains. Our main result in this regard is:

\begin{thm}\label{main3} 
	Let $\omz_j$ and $\omz$ be smooth bounded pseudoconvex domains in $\C^n$. Suppose $\omz_j$ and $\omz$ are of uniform finite $D_q$-type in $\C^n$, $1\le q\le n-1$. Let $k$ be a positive integer. Then there exist constants $\delta>0$ and $C_k>0$ such that
	\begin{equation}
		|\lambda^q_k(\omz_j)-\lambda^q_k(\omz)|\le C_k \delta_j,
	\end{equation}
	provided $\delta_j=d_H(\Omega, \Omega_j)<\delta$. 
\end{thm}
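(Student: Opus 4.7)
The plan is to run the variational argument used for Theorems \ref{main1} and \ref{main2}, upgraded with the extra regularity and uniform estimates supplied by the finite type hypothesis. The starting point is Catlin's subellipticity theorem \cite{Catlin87}: a smooth bounded pseudoconvex domain of finite D'Angelo $q$-type $t$ admits a subelliptic estimate of some order $\epsilon = \epsilon(t,n)>0$ for $\Box$ on $(0,q)$-forms. Under the uniform finite $D_q$-type hypothesis, I would show that the subelliptic constants can be chosen uniform over $\Omega$ and the family $\{\Omega_j\}$; bootstrapping then yields uniform $C^\infty(\ol{\Omega_j})$, $C^\infty(\ol{\Omega})$ bounds on eigenforms associated with the first $k$ eigenvalues, with Sobolev norms controlled solely by $k$ and the uniform subelliptic data. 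In particular, finite type implies property $(P_{q-1})$, so Theorem \ref{main2} applies and the spectra on each $\Omega_j$ are discrete.

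For the upper bound $\lambda_k^q(\Omega_j)\le \lambda_k^q(\Omega)+C_k\delta_j$, let $u_1,\dots,u_k$ be an $L^2$-orthonormal basis of the spectral subspace on $\Omega$ associated with the first $k$ eigenvalues. I would construct a transfer map $T_j\colon \Span\{u_l\}\to \Dom(Q_{\Omega_j})$ in the spirit of Theorems \ref{main1}--\ref{main2}: near $b\Omega$, decompose each $u_l = u_l^\tau + u_l^\nu$ into tangential and complex normal components with respect to a smooth defining function of $\Omega$, reinterpret this decomposition relative to a defining function of $\Omega_j$ (which differs from that of $\Omega$ by $O(\delta_j)$ in $C^1$), extend the tangential piece across the boundary layer $\Omega_j\setminus\Omega$, and multiply the normal piece by a cutoff that enforces the membership of $T_j u_l$ in $\Dom(\dbarstar_{\Omega_j})$. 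Using the uniform smooth control of $u_l$ up to $b\Omega$, the standard boundary-layer estimates give
\begin{equation*}
\|u_l - T_j u_l\|_{L^2(\Omega\cap\Omega_j)}^2 + \|T_j u_l\|_{L^2(\Omega_j\setminus\Omega)}^2 \le C\delta_j^2,
\end{equation*}
and
\begin{equation*}
Q_{\Omega_j}(T_j u_l, T_j u_m) = Q_{\Omega}(u_l, u_m) + O(\delta_j), \qquad \langle T_j u_l, T_j u_m\rangle_{\Omega_j} = \delta_{lm} + O(\delta_j),
\end{equation*}
with implicit constants independent of $j$. The min-max principle then produces the one-sided bound.

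The reverse inequality $\lambda_k^q(\Omega)\le \lambda_k^q(\Omega_j)+C_k\delta_j$ is obtained by the symmetric construction, starting from an $L^2$-orthonormal basis of the first-$k$ spectral subspace on $\Omega_j$ and transferring to $\Omega$. Here the uniformity of the subelliptic estimates along the family $\{\Omega_j\}$ is essential, because it guarantees the uniform Sobolev bounds on these eigenforms that power the $O(\delta_j)$ layer estimate. Combining the two directions gives the claim.

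The main obstacle will be verifying that every error term in the transfer really scales linearly in $\delta_j$, rather than the $o(1)$ that sufficed for Theorems \ref{main1} and \ref{main2}. Two ingredients carry the argument: (i) uniform $C^\infty$ smoothness of eigenforms up to the boundary, which upgrades the Dirichlet-type estimate for the normal component from $\delta_j$ to $\delta_j^2$ and the Neumann-type estimate for the tangential component to $\delta_j$; and (ii) a careful choice of cutoffs and tangential/normal frames compatible with the defining functions of both $\Omega$ and $\Omega_j$. Establishing that the uniform finite $D_q$-type hypothesis actually propagates through Catlin's construction of plurisubharmonic weights / Kohn multipliers to yield uniform subelliptic constants on the whole family is the key technical step that underpins the whole proof.
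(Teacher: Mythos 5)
Your overall strategy (transfer eigenforms between the domains, control errors by uniform subelliptic estimates and uniform boundary regularity of eigenforms) is in the right spirit, and your ingredient (i) -- the normal component vanishes on the boundary, so on a collar of width $\delta_j$ it is $O(\delta_j)$ pointwise and its cutoff error is harmless -- is essentially the role played in the paper by the Brezis--Marcus Hardy inequality and Davies' boundary estimate ($\|f_N\|_{A_\delta}\le C\delta^{3/2}$). But there are two genuine gaps in the transfer step itself. First, your claim that the defining function of $\Omega_j$ ``differs from that of $\Omega$ by $O(\delta_j)$ in $C^1$'' does not follow from the hypotheses: uniform finite $D_q$-type only gives uniform $C^\infty$ bounds on the defining functions, and together with $d_H(\Omega,\Omega_j)=\delta_j$ this yields (by interpolation) only $O(\delta_j^{1/2})$ closeness of the gradients, not $O(\delta_j)$; a bump of height $\delta_j$ and width $\delta_j^{1/2}$ is consistent with all assumptions. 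Since your frame-matching errors enter at the level of this $C^1$ discrepancy, the argument as written only promises a rate $\delta_j^{1/2}$ -- which is exactly the rate the paper obtains \emph{without} the uniform finite type assumption on the $\Omega_j$ -- so the linear rate of the theorem is not established. Second, cutting off only the $\Omega$-normal piece does not put $T_ju_l$ into $\Dom(\dbarstar_{\Omega_j})$: the extended $\Omega$-tangential piece has a nonzero $\Omega_j$-normal component on $\partial\Omega_j$ of the size of the frame mismatch, and repairing this (projecting onto the $\Omega_j$-tangential frame or cutting the mismatch off over a $\delta_j$-collar) creates derivative terms of size $\delta_j^{-1}\cdot\delta_j^{1/2}$ on a set of volume $O(\delta_j)$, i.e.\ errors in the quadratic form that are not even $o(1)$ without further structure. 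So the ``standard boundary-layer estimates'' you invoke are precisely the step that needs a new idea.

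The paper avoids both problems by never matching boundary conditions directly. It takes a Stein extension $\ec f$ of the eigenform $f$ of $\Box_\Omega$ and a cutoff $f_{\delta_j}=\eta_{\delta_j}\widetilde f$ supported in $\Omega_j$, and transfers by the regularization
\begin{equation*}
T_jf=\dbarstar_jN_j\dbar_j\,\ec f+\dbar_jN_j\dbarstar_j f_{\delta_j},
\end{equation*}
which lies in $\Dom(Q_{\Omega_j})$ automatically (each summand is in the range of $\dbarstar_j$, resp.\ $\dbar_j$), so no tangential/normal matching between the two boundaries is ever needed. The boundary condition on $\Omega$ enters only through the smallness of $f_N$ in the collar (Hardy/Davies plus the $C^l(\overline\Omega)$ bounds on eigenforms from Catlin's subelliptic estimates), and the uniform finite type of the family $\{\Omega_j\}$ is used at exactly one point: to get uniform $C^2$ bounds on $N_j\ec f$ up to $\partial\Omega_j$, which upgrades the cross term $\langle f_{\delta_j}-\ec f,\,\dbarstar_j\dbar_jN_j\ec f\rangle_{\Omega_j}$ from $O(\delta_j^{1/2})$ to $O(\delta_j)$; the reverse inequality is the symmetric argument with eigenforms on $\Omega_j$. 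If you want to salvage your direct tangential/normal transfer, you would need to prove a genuinely new estimate showing the frame mismatch contributes only $O(\delta_j)$ to the quadratic form, and that is not supplied by the stated hypotheses.
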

We refer the reader to Section~\ref{sec:finite-type} for precise definition of {\it uniform} finite type. Our analysis is based on Catlin's construction of bounded plurisubharmonic functions with large complex Hessians. We will also use a version of sharp Hardy inequality due to Brezis and Marcus~\cite{BrezisMarcus97} and an idea from Davies~\cite{Davies00}.

This paper is organized as follows. In Section~\ref{sec:prelim}, we recall the spectral theoretic
setup of the $\dbar$-Neumann Laplacian and relevant facts regarding the variational eigenvalues. In Section~\ref{sec:upper-semi}, we establish upper semi-continuity property for the variational eigenvalues of the $\dbar$-Neumann Laplacian on bounded pseudoconvex domains in $\C^n$ and prove Theorem~\ref{main1}. In Section~\ref{sec:lower-semi}, we study lower semi-continuity of the variational eigenvalues on bounded pseudoconvex domains satisfying property ($P$) and establish Theorem~\ref{main2}. In Section~\ref{sec:finite-type}, we obtain quantitative estimates, including Theorem~\ref{main3}, for stability of the variational eigenvalues on pseudoconvex domains of finite type. Section~\ref{sec:re} contains further results on convergence of the $\dbar$-Neumann  Laplacian in resolvent sense.

\section{Preliminary}\label{sec:prelim}
We first review relevant elements in general spectral theory.  Let $Q$ be a non-negative, densely defined, and closed
sesquilinear form on a complex Hilbert space $\hb$ with domain $\Dom(Q)$. Then $Q$
uniquely determines a non-negative self-adjoint operator $S$ such that
$\Dom(S^{1/2})=\Dom(Q)$ and
\[
Q(u, v)=\langle S^{1/2}u, \; S^{1/2}v\rangle
\]
for all $u, v\in \Dom(Q)$. Furthermore,
\[
\Dom(S)=\{ u\in\Dom(Q) \mid \exists f\in \hb, Q(u, v)=\langle f, v\rangle, \forall v\in \Dom(Q) \}.
\]
(See, e.g., Theorem 4.4.2 in \cite{Davies95}.) For any subspace
$L\subset\Dom(Q)$, let 
$$
\lambda_Q(L)=\sup\{Q(u, u) \mid  u\in L,
\|u\|=1\}.
$$
For any positive integer $k$, let
\begin{equation}\label{minmax}
\lambda_{k} (S)=\inf\{\lambda(L) \mid L\subset \Dom(Q),
\dim(L)=k\}
\end{equation}
be the $k^{\rm th}$ {\it variational} eigenvalues of $S$.
The resolvent set $\rho(S)$ of the operator $S$ consists of all $\lambda\in\C$ such that
$S-\lambda I\colon\Dom(S)\to \hb$ 
is both one-to-one and onto. It follows from the closed graph theorem that this operator has a bounded inverse, the resolvent operator $R_\lambda(S)=(S-\lambda I)^{-1}\colon \hb\to \Dom(S)$. The spectrum $\sigma(S)$ is the complement of $\rho(S)$ in $\C$. It is a non-empty closed subset of $[0, \ \infty)$. The lowest point in the spectrum is $\lambda_1(S)$.  The essential spectrum $\sigma_e(S)$ is the closed subset of $\sigma(S)$ that consists of isolated eigenvalues of infinite multiplicity and accumulation points of the spectrum. 
The bottom of the essential spectrum, $\inf\sigma_e(S)$, is the limit of $\lambda_k(S)$ as $k\to\infty$. The essential spectrum $\sigma_e(S)$ is empty if and only if $\lambda_{k}(S)\to\infty$ as $k\to\infty$. In this case, the variational eigenvalue $\lambda_{k}(S)$ is a {\it bona fide} eigenvalue of $S$. Indeed, it is the $k^{\text {th}}$ eigenvalue when the eigenvalues are arranged in increasing order and repeated according to multiplicity.  One approach to measuring spectral stability of a self-adjoint operator is through study how the variational eigenvalues vary as the operator is perturbed. The following simple lemma is well known~(compare \cite[Theorem~3.2]{BL07}):
 
 \begin{lem}\label{first} Let $S_i$, $i=1, 2$, be non-negative self-adjoint operators on Hilbert spaces $\hb_i$ with associated quadratic forms $Q_i$. Let $T\colon \Dom(Q_1)\to\Dom(Q_2)$ be a linear transformation from the domain of $Q_1$ to that of $Q_2$. Let $k$ be a positive integer. Suppose 
 there exist $0<\alpha_k<1/(2k)$ and $\beta_k>0$ such that for any orthonormal set $\{f_1, f_2, \ldots, f_k\}\subset\Dom(Q_1)$,  
 \begin{equation}\label{eq:approx}
 |\langle Tf_h, Tf_l\rangle_{2}-\delta_{hl}|\le \alpha_k \quad\text{and}\quad |Q_{2}(T f_h, T f_l)-Q_{1}(f_h,f_l)|\le \beta_k, \quad 1\le h, l\le k.
 \end{equation}
 	Then 
 	\begin{align}\label{30}
 		\lambda_k(S_2)\le \lambda_k(S_1)+2 k(\alpha_k\lambda_k(S_1)+\beta_k).
 	\end{align}
 \end{lem}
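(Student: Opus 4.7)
The plan is to combine the min-max characterization \eqref{minmax} of the variational eigenvalues with the map $T$ to transport a near-optimal $k$-dimensional subspace from $\Dom(Q_1)$ to $\Dom(Q_2)$. Given $\eta>0$, I would first use the definition of $\lambda_k(S_1)$ to choose an orthonormal set $\{f_1,\ldots,f_k\}\subset\Dom(Q_1)$ with $\lambda_{Q_1}(L_1)\le\lambda_k(S_1)+\eta$, where $L_1=\Span\{f_1,\ldots,f_k\}$. The natural candidate subspace for the second operator is then $L_2:=\Span\{Tf_1,\ldots,Tf_k\}\subset\Dom(Q_2)$, and the task reduces to bounding $\lambda_{Q_2}(L_2)$ from above.

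The core of the argument is a direct computation on an arbitrary element $u=\sum_{h=1}^{k} c_h\, Tf_h\in L_2$. Expanding $\|u\|_2^2$, invoking the first inequality in \eqref{eq:approx}, and applying the elementary bound $\bigl(\sum_h|c_h|\bigr)^2\le k\sum_h|c_h|^2$ yields
\begin{equation*}
\|u\|_2^2\ge (1-k\alpha_k)\sum_{h=1}^{k}|c_h|^2.
\end{equation*}
The quantitative hypothesis $\alpha_k<1/(2k)$ forces $1-k\alpha_k>1/2$; this both controls the Rayleigh denominator and implies that the $\{Tf_h\}$ are linearly independent, so that $\dim L_2=k$ and $L_2$ is admissible in \eqref{minmax}. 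Setting $v=\sum_h c_h f_h\in L_1$ (so $\|v\|_1^2=\sum_h|c_h|^2$) and running the analogous expansion for $Q_2(u,u)$ with the second inequality in \eqref{eq:approx} gives
\begin{equation*}
Q_2(u,u)\le Q_1(v,v)+k\beta_k\sum_{h=1}^{k}|c_h|^2\le\bigl(\lambda_k(S_1)+\eta+k\beta_k\bigr)\sum_{h=1}^{k}|c_h|^2.
\end{equation*}

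Dividing the two estimates, I obtain $\lambda_{Q_2}(L_2)\le(\lambda_k(S_1)+\eta+k\beta_k)/(1-k\alpha_k)$, and the min-max principle bounds $\lambda_k(S_2)$ by the same quantity. Letting $\eta\to 0^+$ and rearranging the result with the help of $1/(1-k\alpha_k)<2$ produces precisely \eqref{30}. I do not anticipate any genuine obstacle here: the mildest subtlety is confirming that the perturbed Gram matrix $\bigl(\langle Tf_h,Tf_l\rangle_2\bigr)_{h,l}$ is invertible, but the hypothesis $\alpha_k<1/(2k)$ is tailored exactly for that purpose, after which the proof is routine bookkeeping of Rayleigh quotients.
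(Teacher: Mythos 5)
Your proposal is correct and follows essentially the same route as the paper: transport a ($k$-dimensional, near-optimal) subspace via $T$, bound the Gram and form errors by $k\alpha_k\|f\|_1^2$ and $k\beta_k\|f\|_1^2$ respectively, use $1-k\alpha_k>1/2$ to get injectivity and control the Rayleigh quotient, and invoke the min-max principle \eqref{minmax}. The only cosmetic difference is that you pick an $\eta$-near-minimizing subspace and let $\eta\to 0^+$, whereas the paper bounds an arbitrary $k$-dimensional subspace and then takes the infimum; these are equivalent.
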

 
 \begin{proof} Let $L_k$ be any $k$-dimensional linear subspace of $\Dom(Q_1)$. Let $\{f_1, f_2, \ldots, f_k\}$ be an orthonormal basis
 for $L_k$.  For $f=\sum_{j=1}^k a_j f_j\in L_k$,   
it follows from the Cauchy-Schwarz inequality that
 	\begin{align*}
 		|Q_2(Tf, Tf)-Q_1(f, f)|^2&=\big|\sum_{h,l=1}^{k}\left(Q_2(T f_{h},T f_{l})-Q_1(f_{h},f_{l})\right)a_h\ol{a_l}\big|^2\\
 		&\le \sum_{h,l=1}^{k}\left|Q_{2}(T f_{h},Tf_{l})-Q_{1}(f_{h},f_{l})\right|^2\sum_{h,l=1}^{k}|a_h\ol{a_l}|^2\\
 		&\le k^2\beta_k^2 \|f\|_{1}^4.
 	\end{align*}
 	Thus 
 	\[
 	Q_2(Tf,Tf)\le Q_1(f, f)+k\beta_k \|f\|_{1}^2.
 	\]
 	Similarly, we have
 	\[
 	\|Tf\|^2_2\ge \|f\|^2_1-k\alpha_k \|f\|_1^2>\frac{1}{2} \|f\|_1^2.
 	\]
 	Therefore $T$ is one-to-one on $L_k$ and $T(L_k)$ is a $k$-dimensional linear subspace of $\Dom(Q_2)$. It follows from \eqref{minmax} that
 	\begin{equation}\label{29}
 		\begin{aligned}
 			\lambda_k(S_2)&\le\sup\bigg\{\frac{Q_2(Tf, Tf)}{\|T f\|^2_{2}} \mid f\in L_{k}\bigg\}\le \sup \bigg\{\frac{Q_1(f, f)+k\beta_k \|f\|_1^2}{(1-k\alpha_k)\|f\|_1^2} \mid f\in L_k\bigg\}\\
 			&\le  \dfrac{1}{1-k\alpha_k}\lambda_{Q_1}(L_k)+ \dfrac{k\beta_k}{1-k\alpha_k}=\lambda_{Q_1}(L_k)+\frac{k(\alpha_k\lambda_{Q_1}(L_k)+\beta_k)}{1-k\alpha_k} \\
 			&\le \lambda_{Q_1}(L_k)+2k(\alpha_k\lambda_{Q_1}(L_k)+\beta_k).
 		\end{aligned}
 	\end{equation}
 	Taking the infimum over all $k$-dimensional subspace $L_k$ in $\Dom(Q_1)$, we then obtain the desired inequality \eqref{30}.
 \end{proof}

\begin{remark}\label{remark1} Condition \eqref{eq:approx} in Lemma~\ref{first} can be replaced by the following: For any $k$-dimensional subspace $L_k$ of $\dom(Q_1)$ and  $f\in L_k$,
\begin{equation}\label{eq:approx2}
	\|Tf\|^2_2\ge (1-k\alpha_k) \|f\|_1^2 \quad {\text and} \quad
	Q_2(Tf,Tf)\le Q_1(f, f)+k\beta_k \|f\|_1^2.
\end{equation}
 This is easily seen from the proof above.
\end{remark}

We now recall a spectral theoretic setup for the $\bar\partial$-Neumann Laplacian. (We refer the readers to \cite{FollandKohn72, ChenShaw99, Straube10} for an in-depth treatment on regularity theory of the $\dbar$-Neumann Laplacian.) Let $L^2_{(0, q)}(\Omega)$ be the space of $(0, q)$-forms with $L^2$-coefficients on $\Omega$ with respect to the 
standard Euclidean metric.  Let $\dbar_q\colon L^2_{(0, q)}(\Omega)\to L^2_{(0, q+1)}(\Omega)$ be the maximally defined Cauchy-Riemann operator.  Thus $\Dom(\dbar_q)$, the domain of $\dbar_q$,  consists of those $u\in L^2_{(0, q)}(\Omega)$ such that $\dbar_q u$, defined in the sense of distribution, is in $L^2_{(0, q+1)}(\Omega)$. That is, there exists  $v\in L^2_{(0, q+1)}(\Omega)$
such that 
\[
\langle u, \vartheta \varphi\rangle =\langle v, \varphi\rangle 
\]
for all $\varphi\in \dc_{(0, q+1)}(\Omega)$, where $\vartheta$ is the formal adjoint of $\dbar_q$
and $\dc_{(0, q+1)}(\Omega)$ is the space of smooth $(0, q+1)$-forms with compact support in $\Omega$. Let $\dbarstar_q\colon L^2_{(0, q+1)}(\Omega)\to L^2_{(0, q)}(\Omega)$ be the adjoint of $\dbar_q$. Thus its domain is given by
	\begin{equation}\label{eq:dbarstar-dom}
		\Dom(\dbarstar_q)=\big\{u\in L^2_{(0, q+1)}(\Omega) \mid \exists C>0,
		|\langle u, \dbar_q v\rangle|\le C\|v\|,\ \forall v\in\Dom(\dbar_q)\big\}.
	\end{equation}
The maximally defined $\dbar_q$-operator can be regarded as the adjoint of the formal adjoint $\vartheta_q\colon L^2_{(0, q+1)}(\Omega)\to L^2_{(0, q)}(\Omega)$ whose domain $\Dom(\vartheta_q)=\dc_{(0, q+1)}(\Omega)$. The $\dbarstar_q$-operator is then the closure of $\vartheta_q$ and it is sometimes referred to as the {\it minimal extension} of $\vartheta_q$. Let $\Omega=\{z\in\C^n \mid \rho(z)<0\}$  be a bounded domain with a $C^1$-smooth defining function $\rho$ such that $|\nabla \rho|=1$ on $\partial\Omega$ and let 
$$
u=\sumprime_{|J|=q} u_J \,d\bar z_J \in C^1_{(0, q)}(\overline{\Omega}).
$$
Then $u\in\Dom(\dbarstar_{q-1})$ if and only if 
$$
(\dbar \rho)^*\lrcorner u=	\sumprime_{|K|=q-1}\left(\sum_{k=1}^n u_{kK}\frac{\partial \rho}{\partial z_k}\right) d\bar z_K=0
$$
on $\partial\Omega$, where
	\[
	(\dbar \rho)^*=\sum_{j=1}^n \frac{\partial \rho}{\partial z_j}\frac{\partial}{\partial \bar z_j}
	\]
is the dual $(0, 1)$-vector field of $\dbar \rho$ and $\lrcorner$ denotes the contraction operator.

For $1\le q\le n-1$, let
\[
Q_q(u, v)=\langle\dbar_q u, \dbar_q
v\rangle_\Omega+\langle\dbarstar_{q-1} u,
\dbarstar_{q-1} v\rangle_\Omega
\]
be the sesquilinear form on $L^2_{(0, q)}(\Omega)$ with domain
$\Dom(Q_{q})=\Dom(\dbar_q)\cap \Dom(\dbarstar_{q-1})$.
The self-adjoint operator $\square_{q}$ associated with
$Q_{q}$ is  called {\it the $\dbar$-Neumann Laplacian} on $L^2_{(0, q)}(\Omega)$.  
Consequently, $\square_q$ is given by
$$
\square_q=\dbar_{q-1}\dbarstar_{q-1}+\dbarstar_q\dbar_q
$$
with
$$	
\Dom(\square_q)=\{u\in L^2_{(0, q)}(\Omega) \mid u\in\Dom(Q_q), \dbar_q u\in \Dom(\dbarstar_{q}), \dbar_{q-1}^* u\in\Dom(\dbar_{q-1})\}.
$$
It is an elliptic operator with non-coercive boundary conditions \cite{KohnNirenberg65}.
We will use $\lambda^q_k(\Omega)$ to denote the $k^{th}$-variational eigenvalues of the $\dbar$-Neumann Laplacian $\Box$ on $(0, q)$-forms on $\Omega$, defined as above by 
 \begin{equation}\label{eq:minmax}
 \lambda^q_k(\Omega)=\inf_{L\subset\Dom(Q_q)\atop \dim L=k}\sup\limits_{u\in L\setminus\{0\}}\,Q_q(u,u)/\|u\|^2,
 \end{equation}
where the infimum takes over all linear subspace of $\Dom(Q_q)$ of dimension $k$. We will study spectral stability of the $\dbar$-Neumann Laplacian  as the underlying domain $\Omega$ is perturbed.  
There are several ways to study spectral stability of the $\dbar$-Neumann Laplacian. In this paper, we will focus on stability of the variational eigenvalues and the convergence in resolvent sense. Let $T_j$ and $T$ be self-adjoint operators on Hilbert space ${\mathbb H}$.  We say $T_j$ {\it converges to $T$ in norm} (respectively {\it strong}) {\it resolvent sense}  if for all $\lambda\in {\mathbb C}\setminus {\mathbb R}$, the resolvent operator $R_\lambda(T_j)=(T_j-\lambda I)^{-1}$ converges to $R_\lambda(T)= (T-\lambda I)^{-1}$ in norm (strongly).  It is well known that  if $T_j$ converges to $T$ in norm resolvent sense, then for any $\lambda\not\in\sigma(T)$,  $\lambda\not\in\sigma(T_j)$ for sufficiently large $j$, and if $T_j$ converges to $T$ in strong resolvent sense, then for any $\lambda\in\sigma(T)$, there exist $\lambda_j\in\sigma(T_j)$ so that $\lambda_j\to\lambda$. We refer the reader to \cite[\S VIII.7]{ReedSimon80} for relevant material. 

Perturbation of the domains will be measured by the Hausdorff distance.  Recall that for two
sets $A$ and $B$ in a metric space $(X, d)$, the Hausdorff distance between $A$ and $B$ is given by
\[
\tilde d_H(A,B)=\max\{\sup\limits_{x\in A}\inf\limits_{y\in B}d(x,y), \ \sup\limits_{y\in B}\inf\limits_{x\in A}d(x,y)\}.
\]
In this paper, we will measure the closeness between two domains $\Omega_1$ and $\Omega_2$ in $\C^n$ by the Hausdorff distance between them and their complements using the Euclidean metric. We set  
$$
d_H(\Omega_1, \Omega_2)=\max\{\tilde d_H(\Omega_1, \Omega_2), \ \tilde d_H(\Omega^c_1, \Omega^c_2)\}.
$$
For $\delta>0$, let
\[
\Omega^-_\delta=\{z\in\Omega \mid \dist(z, \Omega^c)>\delta\} \quad\text{and}\quad \Omega^+_\delta=\{z\in\C^n \mid \dist(z, \Omega)<\delta\}.
\]
It is easy to see that $d_H(\Omega_1, \Omega_2)<\delta$ if and only if
\[
\overline{(\Omega_2)^-_\delta}\subset\Omega_1\subset (\Omega_2)^+_\delta\quad\text{and}\quad
\overline{(\Omega_1)^-_\delta}\subset\Omega_2\subset (\Omega_1)^+_\delta.
\]

\section{Upper semi-continuity}\label{sec:upper-semi}

In this section, we establish several upper semi-continuity properties for the variational eigenvalues of the $\dbar$-Neumann Laplacian when the underlying domain is perturbed. We first study spectral stability of the $\dbar$-Neumann Laplacian when the underlying domain is exhausted by subdomains from inside. 

We will use $Q_{q, \Omega}$ to denote the quadratic form associated with the $\bar\partial$-Neumann Laplacian $\Box_{q, \Omega}$ acting on $(0, q)$-forms on $\Omega$. Let $\Omega_2\subset\Omega_1$ be bounded pseudoconvex domains in $\C^n$. Unlike the classical Neumann Laplacian, for a $(0, q)$-form $f\in\Dom(Q_{q, \Omega_1})$, its restriction to $\Omega_2$ is no longer in $\Dom(Q_{q, \Omega_2})$. The following regularization procedure was introduced by Straube~\cite{Straube97} (compare also \cite{MichelShaw01}) to overcome this difficulty: For $f\in\Dom(Q_{q, \Omega_1})$, 
we define
 \begin{align}\label{1}
 	T f=\dbarstar_{q, \Omega_2} N_{q+1, \Omega_2}(\dbar_{q, \Omega_1} f)\big\vert_{\Omega_2}+\dbar_{q-1, \Omega_2} N_{q-1, \Omega_2}(\dbarstar_{q-1, \Omega_1} f)\big\vert_{\Omega_2}.
 \end{align}
When $q=1$, $N_{0, \Omega_2}$ is the inverse of the restriction of $\Box_{0, \Omega_2}$ to the orthogonal complement $\ker(\dbar_{0, \Omega_2})^\perp=\range(\dbarstar_{0, \Omega_2})$ of the kernel of $\dbar_{0, \Omega_2}$ such that $\Box_{0, \Omega_2}N_{0, \Omega_2}=I-P_{0, \Omega_2}$, where $P_{0, \Omega_2}$ is the Bergman projection on $\Omega_2$ (see \cite[Theorem 4.4.3]{ChenShaw99}). Hereafter, for economy of notations, we will suppress the subscripts involving $q$ when doing this causes no confusion and instead use subscript $1$ and $2$ to indicate that the operators act on $\Omega_1$ and $\Omega_2$ respectively. Evidently, $T$ is a linear transformation from $\Dom(Q_1)$ into $\Dom(Q_2)$.  In light of Lemma~\ref{first}, in order to estimate the difference between variational eigenvalues on $\Omega_1$ and $\Omega_2$, we need to compare $f$ and $T f$. 

\begin{lem}\label{lm:T}  Let $\Omega_1$ be a bounded pseudoconvex domain in $\C^n$ and let $f\in\Dom(Q_1)$. For any $\eps>0$, there exists $\delta>0$ such that for any pseudoconvex domain $\Omega_2\subset\Omega_1$ with $d_H(\Omega_1, \Omega_2)<\delta$,
	\begin{equation}\label{eq:t1}
	\|f-Tf\|_{\Omega_2}+\|\dbar (f-Tf)\|_{\Omega_2}+\|\vartheta(f-Tf)\|_{\Omega_2}<\eps,
	\end{equation}
	where $\vartheta$ is the formal adjoint of $\dbar$.
\end{lem}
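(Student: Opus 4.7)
My plan is to treat the three terms in \eqref{eq:t1} separately, using the structural properties of the $\dbar$-Neumann operator on the pseudoconvex subdomain $\Omega_2$ together with a smooth-approximation argument on $\Omega_1$.

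I begin by decomposing $Tf = \alpha + \beta$ with $\alpha := \dbarstar_{q,2} N_{q+1,2}((\dbar f)|_{\Omega_2})$ and $\beta := \dbar_{q-1,2} N_{q-1,2}((\vartheta f)|_{\Omega_2})$. Since $(\dbar f)|_{\Omega_2}$ is $\dbar$-closed (because $\dbar^{2}f=0$) and the pseudoconvex $\Omega_2$ admits no harmonic $(0,q+1)$-forms by H\"ormander's $L^{2}$-theory, $\alpha$ is the canonical $L^{2}$-minimal solution to $\dbar u = (\dbar f)|_{\Omega_2}$ on $\Omega_2$, so in particular $\dbar\alpha = (\dbar f)|_{\Omega_2}$. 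Combined with $\dbar\beta=0$, this yields $\dbar Tf = \dbar(f|_{\Omega_2})$ distributionally, whence the middle summand of \eqref{eq:t1} is identically zero. A parallel distributional calculation using the formal identities $\vartheta\vartheta=0$ and $\vartheta\dbar + \dbar\vartheta = \Box$ together with $\Box_{q-1,2}N_{q-1,2} = I - H_{q-1,2}$ gives
\[
\vartheta(f - Tf) = H_{q-1,2}\bigl((\vartheta f)|_{\Omega_2}\bigr) + \dbar\bigl(\vartheta N_{q-1,2}((\vartheta f)|_{\Omega_2})\bigr),
\]
and an analogous Hodge-theoretic argument produces $f - Tf = P_{\ker\dbar_{q,2}}(f|_{\Omega_2}) - \beta$, where $P_{\ker\dbar_{q,2}}$ is the orthogonal projection in $L^{2}_{(0,q)}(\Omega_2)$.

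Both remaining estimates therefore reduce to quantifying how much the restriction to $\Omega_2$ of a form that on $\Omega_1$ lies in $(\ker\dbar_{1})^{\perp}$ fails to be orthogonal to $\ker\dbar_{2}$. I would approximate $f\in\Dom(Q_1)$ in the graph norm by a smooth form $f^{\eps}\in C^{\infty}(\overline{\Omega_1})\cap\Dom(\dbarstar_1)$ via Friedrichs-type density on pseudoconvex domains. The $\dbar$-Neumann operators $N_{q\pm 1,2}$ are uniformly bounded by H\"ormander's estimate, with constant depending only on $\operatorname{diam}\Omega_2$ and therefore stable under small $d_H$-perturbations, so that replacing $f$ by $f^{\eps}$ contributes only $O(\eps)$ to \eqref{eq:t1} uniformly in $\Omega_2$. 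For the smooth $f^{\eps}$, $\dbarstar_1 f^{\eps}$ is continuous on $\overline{\Omega_1}$ and lies in $(\ker\dbar_{q-1,1})^{\perp}$; testing the projections above against $\psi\in\ker\dbar_{q-1,2}$ (or $\ker\dbar_{q,2}$) and using a cutoff $\chi\psi$ as an almost-$\dbar$-closed extension of $\psi$ to $\Omega_1$ converts the pairing into an integral over the thin shell $\Omega_1\setminus\Omega_2$, whose Lebesgue measure is $O(d_H(\Omega_1,\Omega_2))$.

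The main obstacle is precisely in that last step: the cutoff extension introduces a $\dbar$-error $\dbar\chi\wedge\psi$ localized near $\partial\Omega_2$, which must be absorbed by solving an auxiliary $\dbar$-equation on $\Omega_1$ with the H\"ormander bound controlling the correction in $L^{2}$. Genuine $\dbar$-extension of $\psi$ from $\Omega_2$ to $\Omega_1$ is not generally available, so some such solvability argument is unavoidable; alternatively, as signalled in the Introduction, one may bypass extensions by decomposing $f^{\eps}$ into tangential and normal parts relative to $\partial\Omega_1$ and handling each separately, the tangential component in the style of the classical Neumann Laplacian (where restriction preserves boundary conditions in the limit) and the normal component in the style of the Dirichlet Laplacian (where vanishing on $\partial\Omega_1$ allows extension by zero). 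Either route must leverage both the uniform H\"ormander bound on $N_{q\pm 1,2}$ and the Hausdorff proximity of $\partial\Omega_2$ to $\partial\Omega_1$ to convert geometric smallness of the boundary discrepancy into analytic smallness in the graph norm.
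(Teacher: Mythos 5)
Your first paragraph is correct and agrees with the identities in the paper's proof: $\dbar\,Tf=\dbar f$ on $\Omega_2$, so the middle term of \eqref{eq:t1} vanishes, and $f-Tf=P_{\ker\dbar_{q,2}}(f|_{\Omega_2})-\dbar_2N_{q-1,2}(\vartheta f)$ while $\vartheta(f-Tf)$ is the projection of $(\vartheta f)|_{\Omega_2}$ onto $\ker\dbar_{q-1,2}$ (Bergman part included when $q=1$). The genuine gap is that the entire content of the lemma is the smallness of these two quantities, and your proposal does not establish it. The route you sketch --- pair against $\psi\in\ker\dbar_{q-1,2}$, extend by a cutoff $\chi\psi$, and absorb the error $\dbar\chi\wedge\psi$ by an auxiliary $\dbar$-solution on $\Omega_1$ --- is left unexecuted by your own admission, and it relies on ingredients the hypotheses do not provide: $\Omega_1$ is only a bounded pseudoconvex domain, so neither the density of $C^\infty(\overline{\Omega_1})\cap\Dom(\dbarstar_1)$ (a Friedrichs-type lemma requiring boundary regularity) nor the bound $|\Omega_1\setminus\Omega_2|=O(d_H(\Omega_1,\Omega_2))$ is available; moreover smoothness of $f^{\eps}$ up to $\partial\Omega_1$ does not help, since the obstruction is the $\dbarstar$-boundary condition on $\partial\Omega_2$, not regularity on $\overline{\Omega_1}$.

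The missing idea is to approximate $f$ itself, not test forms in the kernels: because $\dbarstar_1$ is the minimal closed extension of $\vartheta$, the space $\dc_{(0,q)}(\Omega_1)$ is dense in $\Dom(\dbarstar_1)$ in the graph norm $\|\cdot\|_{\Omega_1}+\|\dbarstar_1\cdot\|_{\Omega_1}$, with no boundary regularity needed (and no control of $\dbar$ required). Choose $\phi\in\dc_{(0,q)}(\Omega_1)$ with $\|f-\phi\|_{\Omega_1}+\|\vartheta(f-\phi)\|_{\Omega_1}<\eps$, then take $\delta$ so small that $\supp\phi\subset\subset\Omega_2$; then $\phi\in\Dom(\dbarstar_2)$, so $\vartheta\phi=\dbarstar_2\phi\perp\ker\dbar_{q-1,2}$ and $P_{\ker\dbar_{q,2}}\phi=\dbar_2\dbarstar_2N_2\phi=\dbar_2N_2\vartheta\phi$. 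The $\phi$-contributions therefore cancel exactly in both of your expressions, leaving
\[
f-Tf=\dbar_2\dbarstar_2N_2(f-\phi)-\dbar_2N_2\vartheta(f-\phi),\qquad
\vartheta(f-Tf)=P_{\ker\dbar_{q-1,2}}\bigl(\vartheta(f-\phi)\bigr),
\]
whose norms are bounded by $\|f-\phi\|_{\Omega_1}+C\|\vartheta(f-\phi)\|_{\Omega_1}$ and $\|\vartheta(f-\phi)\|_{\Omega_1}$ respectively, with $C$ the H\"ormander bound for $\dbar_2N_2$ depending only on $\operatorname{diam}\Omega_2$. Note also that $\delta$ is allowed to depend on $f$ (through $\phi$), which is all the lemma asserts; no uniform rate, no duality with $\ker\dbar_2$, and no tangential--normal decomposition (that device enters the paper only later, for the push-out/push-in operators) is required here.
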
	

\begin{proof} Since  $\dc_{(0,q)}(\omz_1)$ is dense in $\Dom(\dbarstar_1)$ in the graph norm $\|f\|_{\omz_1}+\|\dbarstar f\|_{\omz_1}$, for any $0<\varepsilon<1$, there exists  $\phi\in\dc_{(0,q)}(\omz_1)$ such that $\|f-\phi\|_{\omz_1}+\|\vartheta (f-\phi)\|_{\omz_1}<\varepsilon$. We choose $\delta$ sufficiently small such that $\supp\phi\subset\subset\Omega_2$. Thus $\phi\in\Dom(\dbarstar_2)$. It follows that
 \begin{equation}\label{4}
 	\begin{aligned}
 		Tf&= \dbarstar_2\dbar_2N_2 f+\dbar_2N_2\vartheta f \\
 		&=f-\dbar_2\dbarstar_2 N_2 f+\dbar_2N_2\vartheta f\\
 		&=f-\dbar_2\dbarstar_2N_2 (f-\phi)+\dbar_2N_2\vartheta (f-\phi).\\
 	\end{aligned}
 \end{equation}
Here we have used the orthogonal decomposition $u=\dbar\dbarstar N u +\dbarstar\dbar N u$  and commutative properties $N_2\dbar_2 =\dbar_2 N_2$ on $\Dom(\dbar_2)$ and $N_2\dbarstar_2=\dbarstar_2 N_2$ on $\Dom(\dbarstar_2)$.  Moreover, we have
 \begin{equation}\label{4b}
 \dbar_2 T f=\dbar f
 \end{equation}
and
 \begin{equation}\label{5}
 	\begin{aligned}
 		\dbarstar_2 Tf &=\dbarstar_2\dbar_2 N_2\vartheta f=\vartheta f-\dbar_2\dbarstar_2 N_2\vartheta f\\
 		&=\vartheta f-\dbar_2\dbarstar_2 N_2 \vartheta (f-\phi).
 	\end{aligned}
 \end{equation}
The desired inequality \eqref{eq:t1} then follows from H\"{o}rmander's $L^2$-estimates for the $\dbar$-operator which imply that $\dbar_2 N_2$ is a bounded operator whose norm is bounded from above by a constant depending only on the diameter of $\Omega_2$ (see, e.g., \cite[Theorem~4.4.1]{ChenShaw99}).
\end{proof}

We have the following upper semi-continuity property for the variational eigenvalues defined by \eqref{eq:minmax}.

\begin{thm}\label{prop1}
Let	$\omz_1$ be a bounded pseudoconvex domain in $\C^n$. Given $1\le q\le n-1$ and $k\in\N$. For any $\eps>0$, there exists $\delta>0$ such that for any pseudoconvex domain $\Omega_2\subset\Omega_1$, 
\begin{equation}\label{eq:s1}
	\lambda^q_k(\Omega_2)\le \lambda^q_k(\Omega_1)+\eps
\end{equation}	
provided $d_H(\Omega_1, \Omega_2)<\delta$.
\end{thm}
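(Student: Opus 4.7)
The plan is to apply Lemma~\ref{first} to the transformation $T\colon\Dom(Q_1)\to\Dom(Q_2)$ defined in \eqref{1}, starting from a near-optimal $k$-dimensional subspace. Using the min-max characterization \eqref{eq:minmax}, I first fix a $k$-dimensional subspace $L_k\subset\Dom(Q_1)$ with $\lambda_{Q_1}(L_k)\le\lambda^q_k(\Omega_1)+\eps/2$, and then apply Gram--Schmidt in $L^2_{(0,q)}(\Omega_1)$ to obtain an orthonormal basis $f_1,\ldots,f_k$ of $L_k$. This reduces the problem to controlling errors on a fixed finite collection of forms.

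Next, for a prescribed threshold $\eta>0$ (to be chosen at the end), I would shrink $\delta$ so that two properties hold simultaneously for every pseudoconvex $\Omega_2\subset\Omega_1$ with $d_H(\Omega_1,\Omega_2)<\delta$ and every $h=1,\ldots,k$. First, Lemma~\ref{lm:T} applied to each $f_h$ makes the discrepancies $\|f_h-Tf_h\|_{\Omega_2}+\|\dbar(f_h-Tf_h)\|_{\Omega_2}+\|\vartheta(f_h-Tf_h)\|_{\Omega_2}$ smaller than $\eta$. Second, since $\Omega_1\setminus\Omega_2$ lies in a $\delta$-neighborhood of $\partial\Omega_1$ and has Lebesgue measure tending to zero, absolute continuity of the integral makes the tail norms $\|f_h\|_{\Omega_1\setminus\Omega_2}$, $\|\dbar f_h\|_{\Omega_1\setminus\Omega_2}$, and $\|\vartheta f_h\|_{\Omega_1\setminus\Omega_2}$ smaller than $\eta$.

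Combining these two ingredients with the identity $\dbar_2 Tf=\dbar f$ in $\Omega_2$ from \eqref{4b} and the representation of $\dbarstar_2 Tf$ in \eqref{5}, I expect to verify the hypotheses \eqref{eq:approx} of Lemma~\ref{first}. The inner product $\langle Tf_h,Tf_l\rangle_{\Omega_2}$ will differ from $\langle f_h,f_l\rangle_{\Omega_1}=\delta_{hl}$ by a term dominated by $\|f_h-Tf_h\|_{\Omega_2}$, $\|f_l-Tf_l\|_{\Omega_2}$, and $\|f_h\|_{\Omega_1\setminus\Omega_2}\|f_l\|_{\Omega_1\setminus\Omega_2}$; a parallel decomposition, applied to $\dbar$ and $\dbarstar$ separately, will control $|Q_2(Tf_h,Tf_l)-Q_1(f_h,f_l)|$. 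Thus \eqref{eq:approx} holds with $\alpha_k,\beta_k=O(\eta)$, and Lemma~\ref{first} yields
\[
\lambda^q_k(\Omega_2)\le \lambda_{Q_1}(L_k)+2k\bigl(\alpha_k\lambda_{Q_1}(L_k)+\beta_k\bigr)\le \lambda^q_k(\Omega_1)+\eps
\]
once $\eta$ is chosen small enough that the remainder is at most $\eps/2$.

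The main obstacle I anticipate is essentially bookkeeping: the threshold $\eta$ must be chosen in terms of $k$ and $\lambda^q_k(\Omega_1)$ so that the constant $\alpha_k$ in Lemma~\ref{first} satisfies $\alpha_k<1/(2k)$ and the composite error $2k(\alpha_k\lambda_{Q_1}(L_k)+\beta_k)$ is no larger than $\eps/2$. Since the forms $f_1,\ldots,f_k$ are fixed before choosing $\delta$, no uniform (in $f$) argument is needed, and the verification of \eqref{eq:approx} reduces to bounding finitely many Cauchy--Schwarz-type error terms.
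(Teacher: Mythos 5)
Your proposal is correct and follows essentially the same route as the paper's proof: a near-optimal subspace $L_k$, the transition operator $T$ of \eqref{1} controlled by Lemma~\ref{lm:T}, smallness of the tail norms on $\Omega_1\setminus\Omega_2$, and then Lemma~\ref{first}. The only cosmetic difference is that the paper takes a basis of $L_k$ diagonalizing $Q_1$ rather than a Gram--Schmidt basis, but since $Q_1(f,f)\le\lambda_{Q_1}(L_k)$ for every unit $f\in L_k$, your version of the bookkeeping works just as well.
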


\begin{proof}
	For any $0<\tilde\varepsilon<1$, there exists a $k$-dimensional subspace $L_k\subset \Dom(Q_1)$ such that $\lambda_{Q_1}(L_k)=\sup\{Q_1(f,f)|\,f\in L_k, \|f\|_{\omz_1}=1\} \le\lambda_k(\omz_1)+\tilde\varepsilon$. (As before, we will drop the superscript $q$ for economy of notations when doing so causes no confusion.) Consider $Q_1(\cdot, \cdot)$ as a sesquilinear form on $L_k\times L_k$. Then there exists an orthonormal basis $\{f_1,\cdots,f_k\}$ of $L_k$ such that $Q_1(f_h,f_l)=\gamma_l\delta_{hl}$, $1\le h,l\le k$, and $0\le \gamma_1\le\cdots\le\gamma_k=\lambda_{Q_1}(L_k)$.  Note that 
\begin{equation}\label{t1a}
\|\dbar f_l\|_{\omz_1}^2+\|\dbarstar f_l\|_{\omz_1}^2\le\lambda_k(\omz_1)+\tilde\varepsilon.
\end{equation}
	Furthermore, by choosing $\delta$ sufficiently small, we can assume that 
\begin{equation}\label{t2}
\|f_l\|_{\omz_1\setminus\omz_2}+\|\dbar f_l\|_{\omz_1\setminus\omz_2}+\|\dbarstar f_l\|_{\omz_1\setminus\omz_2}\le \tilde\varepsilon
\end{equation} 
for all $1\le l\le k$.  We have		
	\begin{equation}\label{6}
	\begin{aligned}
&\left|\langle Tf_{h},Tf_{l}\rangle_{\omz_2}-\delta_{hl}\right|=\left|\langle Tf_{h},Tf_{l}\rangle_{\omz_2}-\langle f_h,f_l\rangle_{\omz_1}\right|\\ 
	&\qquad \le\left|\langle Tf_{h}-f_h,Tf_{l}\rangle_{\omz_2}\right|+\left|\langle f_h,Tf_{l}-f_l\rangle_{\omz_2}\right|+\big|\langle f_h,f_l\rangle_{\omz_1\setminus\omz_2}\big|\\ 
	&\qquad \le \|Tf_{h}-f_h\|_{\omz_2}\|Tf_{l}\|_{\omz_2}+\|f_h\|_{\omz_2}\|Tf_{l}-f_l\|_{\omz_2}+\|f_h\|_{\omz_1\setminus\omz_2}\|f_l\|_{\omz_1\setminus\omz_2}\\
	&\qquad\le C\tilde\varepsilon.
	\end{aligned}
	\end{equation}
	Since 	
	\begin{equation}\label{7}
	\begin{aligned}
		Q_{2}(Tf_{h},Tf_{l})-Q_1(f_h,f_l)&= \langle\dbar_2 Tf_{h},\dbar_2 Tf_{l}-\dbar f_l\rangle_{\omz_2}+\langle\dbar_2 Tf_{h}-\dbar f_h,\dbar f_{l}\rangle_{\omz_2} \\
		&-\langle\dbar f_{h},\dbar f_{l}\rangle_{\omz_1\backslash\omz_2}+\langle\dbarstar_2 Tf_{h},\dbarstar_2 Tf_{l}-\dbarstar f_l\rangle_{\omz_2}\\
		&+\langle\dbarstar_2  Tf_{h}-\dbarstar f_h,\dbarstar f_{l}\rangle_{\omz_2}-\langle\dbarstar f_{h},\dbarstar f_{l}\rangle_{\omz_1\backslash\omz_2},
	\end{aligned}
	\end{equation}
	it follows from \eqref{eq:t1}, \eqref{t1a}, \eqref{t2} and the Cauchy-Schwarz inequality that
	\begin{equation}
\left|Q_{2}(Tf_{h},  Tf_{l})-Q_1(f_h, f_l)\right|\le C(\lambda^{1/2}_k(\omz_1)+\tilde\varepsilon^{1/2})\tilde\varepsilon.
\end{equation}
By Lemma~\ref{first}, we have 
\[
\lambda_k(\Omega_2)\le \lambda_k(\Omega_1)+\eps,
\]
provided $\tilde\eps$ is sufficiently small. 	
\end{proof}	

As a direct consequence of Theorem~\ref{prop1}, we have:

\begin{cor}\label{cor1}
	Let	$\omz$, $\omz_j$ be bounded pseudoconvex domains in $\C^n$ such that $\omz_j\subset\omz$ and $d_H(\omz_j, \omz)\to 0$ as $j\to\infty$. Let $1\le q\le n-1$ and $k\in\N$. Then  
	\begin{equation}
		\limsup_{j\to\infty}\lambda^q_k(\Omega_j)\le \lambda^q_k(\Omega).
	\end{equation}
\end{cor}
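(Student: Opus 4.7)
The plan is to apply Theorem~\ref{prop1} directly, since the corollary is essentially a reformulation of that theorem in the language of limits along a sequence of inner approximations. The hypotheses of the corollary exactly match the setting of Theorem~\ref{prop1}: each $\Omega_j$ is a bounded pseudoconvex subdomain of the bounded pseudoconvex domain $\Omega$, and the Hausdorff distances $d_H(\Omega_j,\Omega)$ tend to zero.

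The key step is as follows. Fix $\varepsilon > 0$. By Theorem~\ref{prop1} applied with $\Omega_1 = \Omega$, there exists $\delta > 0$ such that for every pseudoconvex $\Omega_2 \subset \Omega$ with $d_H(\Omega,\Omega_2) < \delta$,
\[
\lambda^q_k(\Omega_2) \le \lambda^q_k(\Omega) + \varepsilon.
\]
Since $d_H(\Omega_j, \Omega) \to 0$, there exists $N$ such that $d_H(\Omega_j, \Omega) < \delta$ whenever $j \ge N$. Taking $\Omega_2 = \Omega_j$ then yields $\lambda^q_k(\Omega_j) \le \lambda^q_k(\Omega) + \varepsilon$ for all $j \ge N$.

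Taking $\limsup_{j \to \infty}$ on both sides gives $\limsup_{j\to\infty} \lambda^q_k(\Omega_j) \le \lambda^q_k(\Omega) + \varepsilon$, and letting $\varepsilon \to 0^+$ produces the claimed inequality. There is no substantive obstacle here; the entire content of the corollary is packaged inside Theorem~\ref{prop1}, whose proof already handled the delicate matter of constructing the comparison map $T$ via Straube's regularization and controlling both $\|f - Tf\|$ and the corresponding quadratic form difference through Lemma~\ref{lm:T} and Lemma~\ref{first}.
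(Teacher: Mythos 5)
Your proof is correct and is exactly the argument the paper intends: the corollary is stated as a direct consequence of Theorem~\ref{prop1}, and your epsilon--delta argument followed by taking $\limsup$ and letting $\varepsilon\to 0^+$ is precisely that deduction.
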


We now study stability of variational eigenvalues of the $\dbar$-Neumann Laplacian on a bounded pseudoconvex domain $\Omega$ as it is encroached--not necessarily from inside--by pseudoconvex domains. Let $\Omega$ be a pseudoconvex domain in $\C^n$ with $C^1$-smooth boundary. Let $\rho(z)$ be the signed distance function such that $\rho(z)=-\dist(z, \partial\Omega)$ on $\Omega$ and $\rho(z)=\dist(z, \partial\Omega)$ on $\C^n\setminus\Omega$. Then $\rho$ is $C^1$ in a neighborhood $U$ of $\partial\Omega$ and $|\nabla\rho(z)|=1$ on $U$ (see \cite{KrantzParks81}). Let $z'\in\partial\omz$ and let $U'\subset U$ be a tubular neighborhood of $z'$ such that $|\nabla\rho(z)-\nabla\rho(z')|<1/2$ when $z\in U'$. Denote $\vv n(z)=\nabla r(z)$ and 
$$
\Omega^\pm_\delta=\{z\in\C^n \mid r(z)<\pm\delta\}.
$$ 
Shrinking $U'$ if necessary, then for sufficiently small $\delta>0$, we have $z-2\delta \vv n(z')\in \Omega$ for any $z\in U'\cap \Omega^+_\delta$ and $z+2\delta\vv n(z')\not\in\Omega$ for any $z\in U'\setminus\Omega^-_\delta$. Furthermore, 
	$$\dist(z- 2\delta\vv n(z'),\partial\omz)\ge\dist(z - 2\delta\vv n(z),\partial\omz)-2\delta|\vv n(z)-\vv n(z')|>2\delta-\delta=\delta$$
	for all $z\in U'\cap\Omega^+_\delta$. 
We choose a finite covering $\{U^l\}_{l=0}^{m}$ of $\ol{\omz}$ such that $U^0$ is relatively compact in $\omz$ and each $U^l$, $1\le l\le m,$ is a tubular neighborhood about some $z^l\in\partial\omz$ constructed as above. Write $\vv n^l=\vv n(z^l)$. We then have
$$
\bigcup_{l=1}^{m}\left\lbrace z-2\delta\vv n^l\,|\,z\in U^l\cap\omz\right\rbrace \bigcup U^0 \subset   \omz^-_\delta
$$
and 
$$
\bigcup_{l=1}^{m}\left\lbrace z+2\delta\vv n^l\,|\,z\in U^l\cap\omz\right\rbrace\bigcup U^0 \supset   \omz^+_\delta.
$$
Let $\{\psi^l\}_{l=0}^{m}$ be a partition of unity subordinated the covering $\{U^l, \ 0\le l\le m\}$ such that $\supp \psi^l\subset U^l$. 
Let $f\in \Dom(Q_{q, \Omega})$. Let $\wt f$ be the form obtained by extending $f$ to 0 outside of $\omz$. Then $\vartheta \widetilde{f}=\widetilde{\dbarstar_\Omega f}\in L^2_{(0, q-1)}(\C^n)$ (see \cite[p.~31]{ChenShaw99}). Let  
\begin{equation}\label{hat}
\widehat f_\delta(z)=\psi^0(z) f(z)+\sum_{l=1}^{m}\psi^l(z)f(z-2\delta\vv{n}^l)	
\end{equation}
for $z\in\omz^+_\delta$ and let
\begin{equation}\label{check}
		\widecheck f_\delta (z)=\psi^0(z)\tf(z)+\sum_{l=1}^{m}\psi^l(z)\tf(z+2\delta\vv{n}^l)
		\end{equation}
for $z\in\omz$. Here we use $f(z\pm 2\delta\vv{n}^l)$ to denote the form obtained by replacing the coefficient $f_J(z)$ of the form $f$ by $f_J(z\pm 2\delta\vv{n}^l)$:
\[
f(z\pm 2\delta\vv{n}^l)=\sumprime_{|J|=q} f_J(z\pm 2\delta\vv{n}^l) d\bar z_J.
\]
Notice that $\widecheck f_\delta(z)$ is supported on $\Omega^-_\delta$.
Roughly speaking, the form $\widehat f_\delta$ and $\widecheck f_\delta$ are respectively the {\it push-out} and {\it push-in} of $f$ along the normal direction by $\delta$ unit. These constructions are used to counter the fact that the restriction of $f$ to a subdomain does not necessarily belong to $\Dom(\dbarstar)$ on the subdomain and the extension of $f$ to zero outside of $\Omega$ does not necessarily belong to $\Dom(\dbar)$ on a larger domain. 
 
\begin{lem}\label{lm:hatcheck} Let $\Omega$ be a bounded domain in $\C^n$ with $C^1$-smooth boundary. Let $f\in \Dom(Q_{q, \Omega})$. Then
	$\widehat f_\delta\in \Dom(\dbar_{\Omega_\delta^+})$,  $\widecheck f_\delta\in\Dom(\dbarstar_{\Omega_\delta^-})$, and 
	\begin{equation}\label{eq:hatcheck}
	\|\widehat f_\delta\|_{\Omega^+_\delta}\le C\|f\|_\Omega, \quad \|\dbar\widehat f_\delta\|_{\Omega^+_\delta}\le C(\|f\|_\Omega+\|\dbar f\|_\Omega)
	\end{equation}
	and
	\begin{equation}\label{eq:hatcheck2}
	\|\widecheck{f}_\delta\|_{\Omega}\le C\|f\|_\Omega, \quad  \|\dbarstar\widecheck{f}_\delta\|_{\Omega}\le C(\|f\|_\Omega+\|\dbarstar f\|_\Omega)
	\end{equation}  
for some constant $C>0$ independent of $\delta$. Furthermore,
\begin{equation}\label{eq:hatcheck3}
	\|\widehat f_\delta -\widetilde f\|_{\Omega^+_\delta}+\|\dbar\widehat f_\delta -\widetilde{\dbar f}\|_{\Omega^+_\delta}\to 0
\end{equation}
and 
\begin{equation}\label{eq:hatcheck4}
	\|\widecheck f_\delta -f\|_{\Omega}+\|\vartheta\widecheck f_\delta -\vartheta f\|_{\Omega}\to 0
\end{equation}
as $\delta\to 0$, where $\widetilde{\dbar f}$, as before, is the extension of $\dbar f$ to $0$ outside of $\Omega$.
\end{lem}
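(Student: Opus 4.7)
\emph{Plan.} The proof handles $\widehat f_\delta$ and $\widecheck f_\delta$ in parallel, and for each performs three tasks: domain membership, the $L^2$ bounds, and the $\delta\to 0$ convergence. The push-out case is the easier one because $\Dom(\dbar_{\Omega_\delta^+})$ imposes no boundary condition; the push-in case is harder because $\Dom(\dbarstar_{\Omega_\delta^-})$ does, and the geometric setup of the construction has to be exploited carefully.

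For $\widehat f_\delta \in \Dom(\dbar_{\Omega_\delta^+})$ it suffices to show the coefficients and the distributional $\dbar$ are in $L^2(\Omega_\delta^+)$. Since $z-2\delta\vv n^l\in\Omega$ for $z\in U^l\cap\Omega_\delta^+$ by the construction, each translate $z\mapsto f(z-2\delta\vv n^l)$ is a well-defined form with $\dbar f(\cdot-2\delta\vv n^l) \in L^2$, and the Leibniz rule yields
\begin{equation*}
\dbar\widehat f_\delta \;=\; \dbar\psi^0\wedge f \,+\, \psi^0\,\dbar f \,+\, \sum_{l=1}^{m}\bigl(\dbar\psi^l\wedge f(\cdot-2\delta\vv n^l) \,+\, \psi^l\,(\dbar f)(\cdot-2\delta\vv n^l)\bigr).
\end{equation*}
The bounds \eqref{eq:hatcheck} then follow from $L^2$-isometry of translation together with uniform bounds on $|\nabla\psi^l|$ and $|\psi^l|$.

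For $\widecheck f_\delta\in\Dom(\dbarstar_{\Omega_\delta^-})$, the analogous Leibniz computation with $\vartheta$ (using $\widetilde f\in\Dom(\vartheta)$ on $\C^n$ and $\vartheta\widetilde f=\widetilde{\dbarstar f}$) gives $\vartheta\widecheck f_\delta\in L^2$. The critical geometric step is to show that $\widecheck f_\delta$ is \emph{compactly} supported in $\Omega_\delta^-$. If $z$ lies in the support of $\psi^l\widetilde f(\cdot+2\delta\vv n^l)$ then $z+2\delta\vv n^l\in\overline\Omega$; since $|\nabla\rho-\vv n^l|<1/2$ along the segment, the Fundamental Theorem of Calculus gives
\begin{equation*}
\rho(z+2\delta\vv n^l)-\rho(z) \;=\; \int_0^1 2\delta\,\vv n^l\cdot\nabla\rho(z+2t\delta\vv n^l)\,dt \;>\; \delta,
\end{equation*}
so $\rho(z)<-\delta$, i.e., $z$ lies strictly inside $\Omega_\delta^-$. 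The support is therefore a closed bounded subset of the open set $\Omega_\delta^-$, hence compact in it (the $\psi^0\widetilde f$ piece is supported in $\overline{U^0}\subset\subset\Omega_\delta^-$ for $\delta$ small). Compact support plus $\widecheck f_\delta,\vartheta\widecheck f_\delta\in L^2$ then places $\widecheck f_\delta\in\Dom(\dbarstar_{\Omega_\delta^-})$ by standard mollification, and \eqref{eq:hatcheck2} follows as before.

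The convergence statements \eqref{eq:hatcheck3}-\eqref{eq:hatcheck4} reduce to strong $L^2$-continuity of the translation group: $\|g(\cdot\pm 2\delta\vv n^l)-g\|_{L^2(\C^n)}\to 0$ for any $g\in L^2(\C^n)$, applied to $g=\widetilde f,\widetilde{\dbar f}$ (hat case) and $g=\widetilde f, \widetilde{\dbarstar f}$ (check case), combined with $\psi^0+\sum_l\psi^l\equiv 1$ on $\overline\Omega$. The main obstacle is precisely the compact-support claim for $\widecheck f_\delta$: the factor $2$ in the push and the strict $1/2$ bound on $|\nabla\rho-\vv n^l|$ are both essential to produce the strict inequality $\rho(z)<-\delta$, because an equality there would place the support on $\partial\Omega_\delta^-$ and one would be forced to verify the $\dbar$-Neumann boundary condition explicitly—which there is no reason for $\widecheck f_\delta$ to satisfy.
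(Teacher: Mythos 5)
Your proof is correct and follows essentially the same route as the paper's: domain membership is read off from the construction (with the key geometric point being that $\supp\widecheck f_\delta$ lies strictly inside $\Omega_\delta^-$), the $L^2$ bounds come from the Leibniz rule and translation isometry, and the convergence statements reduce to $L^2$-continuity of translations. You have in fact supplied more detail than the paper, which disposes of the first part with "follows directly from the definitions"; your FTC computation making the strict containment $\supp\widecheck f_\delta\subset\subset\Omega_\delta^-$ explicit, and the mollification step placing $\widecheck f_\delta$ in $\Dom(\dbarstar_{\Omega_\delta^-})$, are exactly the facts the paper leaves implicit.
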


\begin{proof} The first part of the lemma follows directly from the definitions of $\widehat f_\delta$ and $\widecheck f_\delta$. Notice that $\supp\widecheck f_\delta \subset \Omega^-_\delta$ and $\vartheta\widecheck f_\delta\in L^2_{(0, q)}(\C^n)$. Hence $\widecheck f_\delta\in\Dom(\dbarstar_{\Omega^-_\delta})$. Since
	
	\begin{equation}\label{hf}
		\begin{aligned}
			&\|\widehat f_\delta-\wt f\|_{\Omega^+_\delta}+\|\dbar \widehat f_\delta-\wt {\dbar f}\|_{\Omega_\delta^+}\\
			&\lesssim\sum_{l=1}^{m}\left\|f(z-2\delta\vv{n}^l)-\wt f(z)\right\|_{\omz_\delta^+\cap U^l}+\sum_{l=1}^{m}\left\|\dbar f(z-2\delta\vv{n}^l)-\wt{\dbar f}(z)\right\|_{\omz_\delta^+\cap U^l}\\
			&\le \sum_{l=1}^{m}\left\|\widetilde f(z-2\delta\vv{n}^l)-\widetilde f(z)\right\|_{\C^n}+\sum_{l=1}^{m}\left\|\widetilde{\dbar f}(z-2\delta\vv{n}^l)-\widetilde{\dbar f}(z)\right\|_{\C^n},
		\end{aligned}
	\end{equation}
	we then obtain \eqref{eq:hatcheck3} from the dominated convergence theorem. The proof of \eqref{eq:hatcheck4} is similar and is left to the reader. 
	\end{proof}	

\begin{thm}\label{prop5}
Let	$\omz_1$ be a bounded pseudoconvex domain in $\C^n$ with $C^1$ boundary. Let $1\le q\le n-1$ and $k\in\N$. For any $\eps>0$, there exists $\delta>0$ such that for any pseudoconvex domain $\Omega_2$, we have
\begin{equation}
	\lambda^q_k(\Omega_2)\le \lambda^q_k(\Omega_1)+\eps
\end{equation}	
provided $d_H(\Omega_1, \Omega_2)<\delta$. 
\end{thm}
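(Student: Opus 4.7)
The plan is to apply Lemma~\ref{first} to a linear map $T\colon \Dom(Q_{q,\Omega_1}) \to \Dom(Q_{q,\Omega_2})$ built by combining the push-out construction from Lemma~\ref{lm:hatcheck} with the Straube-style $\dbar$-Neumann regularization used in the proof of Theorem~\ref{prop1}. The obstacle compared with Theorem~\ref{prop1} is that $\Omega_2\not\subset\Omega_1$, so neither restriction nor zero-extension of an $f\in\Dom(Q_{q,\Omega_1})$ preserves $\Dom(Q)$. The remedy is to transport the $\dbar$-data outward via the push-out $\widehat f_\delta$, which lies in $\Dom(\dbar)$ on $\Omega_1^+_\delta\supset\Omega_2$, and to approximate the $\vartheta$-data by a smooth compactly supported form $\phi\in\dc_{(0,q)}(\Omega_1)$ with $\|f-\phi\|_{\Omega_1}+\|\vartheta(f-\phi)\|_{\Omega_1}<\tilde\eps$; for $\delta$ small one has $\supp\phi\subset\subset\Omega_1^-_\delta\subset\Omega_2$, so $\phi\in\dc_{(0,q)}(\Omega_2)$.

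With these ingredients, set
$$
Tf := \dbarstar_{q,2}\, N_{q+1,2}\bigl((\dbar\widehat f_\delta)|_{\Omega_2}\bigr) + \dbar_{q-1,2}\, N_{q-1,2}(\vartheta\phi),
$$
which lies in $\Dom(Q_{q,\Omega_2})$ by construction. The Hodge identity $\dbar\dbarstar N+\dbarstar\dbar N=I$ on $L^2_{(0,q)}(\Omega_2)$, valid for pseudoconvex $\Omega_2$ and $1\le q\le n-1$ by the absence of harmonic $(0,q)$-forms, together with the commutator relations $\dbar N=N\dbar$ and $\dbarstar N=N\dbarstar$ applied to $\phi$ as in \eqref{4}--\eqref{5} and then via adding and subtracting $\phi$, yields
$$
\dbar_2 Tf = (\dbar\widehat f_\delta)|_{\Omega_2}, \quad \dbarstar_2 Tf = \vartheta\phi, \quad Tf-\widehat f_\delta|_{\Omega_2} = -\dbar_2\dbarstar_2 N_2(\widehat f_\delta-\phi).
$$
(At $q=1$ the Bergman projection $P_0$ annihilates $\vartheta\phi$ since $\langle\vartheta\phi,h\rangle=\langle\phi,\dbar h\rangle=0$ for every holomorphic $h\in L^2(\Omega_2)$, so the middle identity still holds.) Because $\dbar_2\dbarstar_2 N_2$ is the orthogonal projection of $L^2_{(0,q)}(\Omega_2)$ onto $\range(\dbar_2)$, the third identity yields the crude bound $\|Tf-\widehat f_\delta\|_{\Omega_2}\le\|\widehat f_\delta-\phi\|_{\Omega_2}\le\|\widehat f_\delta-\widetilde f\|_{\Omega_2}+\|f-\phi\|_{\Omega_1}$, which tends to $\tilde\eps$ by Lemma~\ref{lm:hatcheck} as $\delta\to 0$.

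Now choose a $k$-dimensional subspace $L_k\subset\Dom(Q_{q,\Omega_1})$ nearly realizing $\lambda_k^q(\Omega_1)$ with an orthonormal basis $\{f_1,\dots,f_k\}$ diagonalizing $Q_1$, and apply the construction to each $f_j$ (with its own $\phi_j$). The three displayed identities, combined with the elementary fact $\|f_j\|_{\Omega_1\setminus\Omega_2}+\|\dbar f_j\|_{\Omega_1\setminus\Omega_2}+\|\vartheta f_j\|_{\Omega_1\setminus\Omega_2}\to 0$ (which follows from $|\Omega_1\setminus\Omega_2|\to 0$), yield, by the same bookkeeping as in \eqref{6}--\eqref{7},
$$
|\langle Tf_h,Tf_l\rangle_{\Omega_2}-\delta_{hl}|\le C\tilde\eps, \qquad |Q_2(Tf_h,Tf_l)-Q_1(f_h,f_l)|\le C(\lambda_k^q(\Omega_1)^{1/2}+1)\tilde\eps.
$$
Lemma~\ref{first} then produces $\lambda_k^q(\Omega_2)\le\lambda_k^q(\Omega_1)+\eps$ for $\tilde\eps$ sufficiently small.

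The main obstacle is justifying the Hodge and commutator manipulations in the absence of $\widehat f_\delta|_{\Omega_2}\in\Dom(\dbarstar_2)$. The trick is to anchor every $\dbarstar_2$-involving identity on the smooth compactly supported $\phi$ (which lies in $\Dom(Q_{q,\Omega_2})$, so the usual manipulations go through), and to transfer to $\widehat f_\delta$ only through the $L^2$-bounded difference $\widehat f_\delta-\phi$, to which $\dbar_2\dbarstar_2 N_2$ is applied as an orthogonal projection of norm at most $1$. A minor subtlety at $q=1$, namely the existence of non-trivial harmonic $(0,0)$-forms, is resolved by the orthogonality $\vartheta\phi\perp\ker\dbar$ noted above.
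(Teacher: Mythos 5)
Your proposal is correct and follows essentially the same route as the paper's proof: push out the $\dbar$-data via $\widehat f_\delta$ (Lemma~\ref{lm:hatcheck}), handle the $\vartheta$-data through compactly supported approximations $\phi$, glue them with the transition operator $\dbarstar_2 N_2\dbar(\cdot)+\dbar_2 N_2(\cdot)$, and conclude with Lemma~\ref{first}. The only (harmless) deviation is that you put $\vartheta\phi$ rather than $\vartheta\widetilde f$ into the definition of $T$, which makes the identity $\dbarstar_2 Tf=\vartheta\phi$ exact but means $T$ is linear on the chosen $L_k$ only after choosing $\phi=\sum_l c_l\phi_l$ linearly in $f=\sum_l c_l f_l$ --- precisely the bookkeeping the paper carries out in its estimates.
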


\begin{proof} Since $d_H(\Omega_1, \Omega_2)<\delta$, we have $(\Omega_1)^-_\delta\subset \Omega_2\subset (\Omega_1)^+_\delta$. For $f\in \Dom(Q_1)$, let $\widetilde f$ be the form obtained by extending $f$ to $0$ outside $\Omega_1$ and let $\widehat f_\delta$ be
	the forms constructed by \eqref{hat} as above (with $\Omega$ replaced by $\Omega_1$). Let
	\begin{align}
	T_\delta f=\dbarstar_2 N_2\dbar_2 \widehat f_\delta+\dbar_2 N_2\va \tf.
	\end{align}
	Then $T_\delta f\in \Dom(Q_2)$. Furthermore,  for any $\phi\in\Dom(\dbarstar_2)$, we have
	\begin{equation}\label{Tf}
	\begin{aligned}
	T_\delta f&=\dbarstar_2\dbar_2N_2 \hf_\delta+\dbar_2 N_2\va (\tf-\phi)+\dbar_2 \dbarstar_2N_2\phi\\
	&=\dbarstar_2\dbar_2N_2 \tf+\dbarstar_2\dbar_2N_2 (\hf_\delta-\tf)+\dbar_2 N_2\va (\tf-\phi)+\dbar_2 \dbarstar_2N_2(\phi-\tf)+\dbar_2 \dbarstar_2N_2\tf\\
	&=\tf+\dbarstar_2\dbar_2N_2 (\hf_\delta-\tf)+\dbar_2 N_2\va (\tf-\phi)+\dbar_2 \dbarstar_2N_2(\phi-\tf).\\
	\end{aligned}
	\end{equation}
	Moreover,
	\begin{equation}\label{df1}
	\dbar_2 T_\delta f=\dbar_2\dbarstar_2N_2\dbar_2\hf_\delta=\dbar_2\hf_\delta
	\end{equation}
	and
	\begin{align}\label{df2}
	\dbarstar_2 T_\delta f=\dbarstar_2\dbar_2 N_2\va \tf=\va \tf-\dbar_2\dbarstar_2 N_2\va \tf=\va \tf-\dbar_2\dbarstar_2 N_2\va(\tf-\phi).
	\end{align}	
	
	Let $L_k$ be a $k$-dimensional subspace of $\Dom(Q_1)$ with an orthonormal basis  $\{f_1,\cdots,f_k\}$.  For any $0<\eps<1$, by choosing $\delta$ sufficiently small, we have that 
\begin{equation}
\sum_{l=1}^k \big( \|f_l\|^2_{\omz_1\setminus\omz_2}+\|\dbar f_l\|^2_{\omz_1\setminus\omz_2}+\|\dbarstar f_l\|^2_{\omz_1\setminus\omz_2}\big)< \varepsilon^2. 
\end{equation} 
Since $\dc_{(0,q)}(\omz_1)$ is dense in $\Dom(\dbarstar_1)$ in the graph norm $\|f\|_{\omz_1}+\|\dbarstar f\|_{\omz_1}$, there exists a $\phi_l\in\dc_{(0,q)}(\omz_1)$ such that 
\begin{equation}\label{phi-1}
\sum_{l=1}^k\big(\|f_l-\phi_l\|^2_{\omz_1}+\|\dbarstar f_l-\dbarstar\phi_l\|^2_{\omz_1}\big)<\varepsilon^2.
\end{equation}	
By choosing $\delta$ sufficiently small,  we have $\supp\phi_l\subset\subset\Omega_2$. Thus $\phi_l\in\Dom(\dbarstar_2)$.
Let 
$$
f=\sum_{l=1}^k c_l f_l \quad \text{and} \quad 
\phi=\sum_{l=1}^k c_l \phi_l.
$$ 
It follows from the Cauchy-Schwarz inequality that
\begin{equation}
\|f\|^2_{\omz_1\setminus\omz_2}+\|\dbar f\|^2_{\omz_1\setminus\omz_2}+\|\dbarstar f\|^2_{\omz_1\setminus\omz_2}\le k \varepsilon^2 \|f\|^2_{\Omega_1}.
\end{equation} 

From \eqref{Tf}, we have
\[
\|T_\delta f -\widetilde{f}\|_{\Omega_2}\le \|\widehat{f}_\delta -\widetilde{f}\|_{\Omega_2}+\|\widetilde{f}-\phi\|_{\Omega_2}+C\|\vartheta(\widetilde{f}-\phi)\|_{\Omega_2},
\]
where the constant $C$ depending only on the diameter of $\Omega_2$, which can be assumed to be uniformly bounded from above. It follows from Lemma~\ref{lm:hatcheck} and \eqref{phi-1} that
\begin{equation}\label{t1}
	\begin{aligned}
\big|\|T_\delta f\|^2_{\Omega_2} - \|f\|^2_{\Omega_1} \big|&=\big|\| T_\delta f\|^2_{\Omega_2}-\|\widetilde f\|^2_{\Omega_2}-\|f\|^2_{\Omega_1\setminus\Omega_2}\big| \\
&\le \big(\|T_\delta f\|_{\Omega_2}+\|\widetilde f\|_{\Omega_2}\big)\|T_\delta f-\widetilde f\|_{\Omega_2}+\|f\|^2_{\Omega_1\setminus\Omega_2}\\
&\le C\eps \|f\|^2_{\Omega_1}.
\end{aligned}
\end{equation}

From \eqref{df1}, \eqref{df2}, \eqref{phi-1} and Lemma~\ref{lm:hatcheck}, we have  
\[
\|\dbar_2 T_\delta f -\widetilde{\dbar f}\|_{\Omega_2}=\|\dbar \widehat{f}_\delta -\widetilde{\dbar f}\|_{\Omega_2}\le C\eps \|f\|_{\Omega_1} 
\]	
and
\[
\|\dbarstar_2 T_\delta f-\vartheta\widetilde f\|_{\Omega_2}\le \|\vartheta (\widetilde f-\phi)\|_{\Omega_2}\le C\eps \|f\|_{\Omega_1}.
\]	
Therefore, similar to \eqref{t1}, we have
\[
\big| Q_2(T_\delta f, T_\delta f)-Q_1(f, f)\big|\le C\eps \|f\|^2_{\Omega_1}.
\]
By Lemma~\ref{first} and the subsequent remark, we then have 
$$\lambda_k(\omz_2)\le \lambda_k(\Omega_1)+C\varepsilon.$$
\end{proof}	

As a direct consequence of Theorem~\ref{prop5}, we have:
\begin{cor}\label{cor2}
	Let	$\omz$, $\omz_j$ be bounded pseudoconvex domains in $\C^n$ such that $\partial\Omega$ is $C^1$ and $d_H(\omz_j, \omz)\to 0$ as $j\to\infty$. Then for any $k\in\nb$, 
	\begin{equation}
		\limsup_{j\to\infty}\lambda^q_k(\Omega_j)\le \lambda^q_k(\Omega), \quad 1\le q\le n-1.
	\end{equation}
\end{cor}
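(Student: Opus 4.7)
The plan is to apply Theorem~\ref{prop5} directly with $\Omega_1=\Omega$ playing the role of the reference domain and $\Omega_2=\Omega_j$ the approximating one. Since $\Omega$ is assumed to be a bounded pseudoconvex domain with $C^1$ boundary, it fits the hypotheses of Theorem~\ref{prop5}, while each $\Omega_j$ is pseudoconvex by assumption; the smoothness hypothesis is imposed only on the \emph{limit} domain, which is exactly what the theorem requires.

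Fix $k\in\nb$, $1\le q\le n-1$, and $\eps>0$. By Theorem~\ref{prop5} there exists $\delta=\delta(\eps,k,\Omega)>0$ such that
\[
\lambda^q_k(\Omega_j)\le \lambda^q_k(\Omega)+\eps
\]
whenever $d_H(\Omega,\Omega_j)<\delta$. Since $d_H(\Omega_j,\Omega)\to 0$ as $j\to\infty$, there exists $J=J(\delta)$ such that the above inequality holds for all $j\ge J$. Taking $\limsup_{j\to\infty}$ yields
\[
\limsup_{j\to\infty}\lambda^q_k(\Omega_j)\le \lambda^q_k(\Omega)+\eps,
\]
and since $\eps>0$ was arbitrary, letting $\eps\to 0$ delivers the corollary.

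There is no real obstacle here: all the analytic content, including the push-out construction $\widehat f_\delta$, the regularization via $T_\delta$, and the use of H\"ormander's $L^2$-estimates on $\Omega_j$, has been absorbed into the statement of Theorem~\ref{prop5}. The corollary is simply the sequential reformulation of that uniform $(\eps,\delta)$-statement, so the proof reduces to this one-line extraction.
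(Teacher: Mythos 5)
Your proof is correct and matches the paper's approach: the paper states Corollary~\ref{cor2} as a direct consequence of Theorem~\ref{prop5} with no further argument given, and your $(\eps,\delta)$-to-$\limsup$ extraction is exactly the intended (and only needed) step.
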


The upper semi-continuity property of the variational eigenvalues also holds without the pseudoconvexity assumption when restricted to level sets.

\begin{thm}\label{prop3} Let $\Omega=\{z\in\C^n \mid \rho<0\}$ be a bounded domain in $\C^n$ with $C^2$-smooth boundary where $\rho\in C^2$ is a defining function of $\Omega$ with $|\nabla\rho|=1$ on $\partial\Omega$. For $\delta>0$, let $\Omega^-_\delta=\{z\in\Omega \mid \rho^-_\delta=\rho+\delta<0\}$
and $\Omega^+_\delta=\{z\in\C^n \mid \rho^+_\delta=\rho-\delta<0\}$. Then for any $k\in\nb$,
\[
\limsup_{\delta\to 0^+} \lambda_k(\Omega^{\pm}_\delta)\le \lambda_k(\Omega).
\]
\end{thm}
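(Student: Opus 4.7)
My plan is to construct, for each small $\delta>0$, a linear map $T_\delta:\Dom(Q_\Omega)\to\Dom(Q_{\Omega^\pm_\delta})$ satisfying the hypotheses of Lemma~\ref{first} and Remark~\ref{remark1} with defect parameters tending to zero as $\delta\to 0^+$; the conclusion will then follow from the standard variational argument on a $k$-dimensional subspace nearly realizing $\lambda_k(\Omega)$. Since the $\Omega^\pm_\delta$ are parallel level sets of $\rho$, pushforward along the normal flow of $\nabla\rho$ is the natural candidate. I would first replace $\rho$ by a $C^2$ function $\tilde\rho$ with $|\nabla\tilde\rho|\equiv 1$ in a tubular neighborhood of $\partial\Omega$ (for instance the signed distance function). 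Because $\rho$ and $\tilde\rho$ agree to second order at $\partial\Omega$, the symmetric differences between $\{\rho<\pm\delta\}$ and $\{\tilde\rho<\pm\delta\}$ lie in an $O(\delta^2)$-tube, which is absorbed by the $\limsup$; hence we may assume $|\nabla\rho|\equiv 1$ in a neighborhood of $\partial\Omega$.

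Let $\eta$ be a smooth cutoff on $\R$, identically $1$ near $0$ and supported in a tube. For small $\delta$, the map $\Phi_\delta(z)=z+\delta\eta(\rho(z))\nabla\rho(z)$ is a $C^1$-diffeomorphism $\bar\Omega\to\overline{\Omega^+_\delta}$ (and, with $-\delta$, a diffeomorphism $\bar\Omega\to\overline{\Omega^-_\delta}$). On $\{\eta=1\}$ one has $\rho\circ\Phi_\delta=\rho+\delta$ and, crucially, $\nabla\rho\circ\Phi_\delta=\nabla\rho$, since $\nabla\rho$ is constant along the normal lines of a signed-distance-type defining function. I would define
\[
T_\delta f(w)=\sumprime_{|J|=q} f_J(\Phi_\delta^{-1}(w))\,d\bar w_J.
\]
At $w\in\partial\Omega^+_\delta$ with preimage $z=\Phi_\delta^{-1}(w)\in\partial\Omega$, the coefficient of $(\dbar\rho(w))^*\lrcorner T_\delta f(w)$ in $d\bar w_K$ is $\sum_k (\partial\rho/\partial z_k)(w)\,f_{kK}(z)=\sum_k (\partial\rho/\partial z_k)(z)\,f_{kK}(z)$, which vanishes by $f\in\Dom(\dbarstar_\Omega)$. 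Hence $T_\delta f\in\Dom(\dbarstar_{\Omega^\pm_\delta})$.

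The principal obstacle is verifying $T_\delta f\in\Dom(\dbar_{\Omega^\pm_\delta})$ and the estimates required by Lemma~\ref{first}, because $\Phi_\delta$ is not holomorphic: the chain rule produces $\dbar T_\delta f=(\dbar f)\circ\Phi_\delta^{-1}+R_\delta$, where $R_\delta$ is an $L^\infty$ matrix of order $O(\delta)$ applied to $(\partial f,\dbar f)$, and $\partial f$ need not lie in $L^2(\Omega)$ for a general $f\in\Dom(Q_\Omega)$. To handle this I would use the tangential--normal decomposition $f=f^t+f^n$ advertised in the introduction, where $f^n=|\dbar\rho|^{-2}\dbar\rho\wedge\bigl((\dbar\rho)^*\lrcorner f\bigr)$ vanishes on $\partial\Omega$ (and so can be extended or contracted by zero across the perturbed boundary in a Dirichlet-like fashion), while the tangential part $f^t=f-f^n$ is treated by the normal pushforward above, the offending $O(\delta)\,\partial f^t$ term being controlled in $L^2$ using only $\|f^t\|_\Omega$ and $\|\dbar f^t\|_\Omega$ via integration by parts along the normal-flow foliation. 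Combining the two pieces gives $T_\delta f\in\Dom(Q_{\Omega^\pm_\delta})$ with $\|T_\delta f\|_{\Omega^\pm_\delta}^2=(1+o(1))\|f\|_\Omega^2$ and $Q_{\Omega^\pm_\delta}(T_\delta f,T_\delta f)=Q_\Omega(f,f)+o(1)(1+Q_\Omega(f,f))$ as $\delta\to 0$, uniformly on any fixed $k$-dimensional $L_k\subset\Dom(Q_\Omega)$ with $\lambda_{Q_\Omega}(L_k)\le\lambda_k(\Omega)+\varepsilon$. Lemma~\ref{first} with Remark~\ref{remark1} then yields $\lambda_k(\Omega^\pm_\delta)\le\lambda_k(\Omega)+C\varepsilon$ for all sufficiently small $\delta$, establishing the asserted $\limsup$.
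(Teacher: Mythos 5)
Your structural plan---decompose $f$ into tangential and normal parts, treat them asymmetrically, and push along the normal---is exactly the paper's strategy (the paper uses a partition-of-unity analogue of your normal-flow diffeomorphism, pushing the normal part inward for $\Omega^-_\delta$ and the tangential part outward for $\Omega^+_\delta$). However, there is a genuine gap precisely at the point you flag as ``the principal obstacle.'' You correctly observe that the chain rule produces an $O(\delta)\,\partial f$ term and that $\partial f$ need not lie in $L^2(\Omega)$ for a general $f\in\Dom(Q_\Omega)$; but your proposed remedy---that $O(\delta)\,\partial f^t$ is ``controlled in $L^2$ using only $\|f^t\|_\Omega$ and $\|\dbar f^t\|_\Omega$ via integration by parts along the normal-flow foliation''---is not a valid step. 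There is no such estimate: near $\partial\Omega$ the $\partial$-derivatives are exactly what $Q_\Omega$ fails to control (this is the non-coercivity of the $\dbar$-Neumann boundary conditions), and the present theorem does not even assume pseudoconvexity, so the Morrey--Kohn--H\"ormander identity is unavailable. The $O(\delta)$ prefactor does not save the argument because $\partial f^t$ may fail to be square-integrable at all, so $\delta\,\|\partial f^t\|$ need not tend to zero.

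The missing ingredient is an a priori density reduction, which is what the paper does first: $C^1_{(0,q)}(\overline\Omega)\cap\Dom(\dbarstar_\Omega)$ is dense in $\Dom(Q_\Omega)$ in graph norm, so by Lemma~\ref{first} and Remark~\ref{remark1} one may take the test subspace $L_k$ to be spanned by forms in this class. For such forms the full gradient is bounded on $\overline\Omega$, and the push-in/push-out errors in $f$, $\dbar f$, and $\vartheta f$ tend to zero as $\delta\to 0^+$ by ordinary uniform continuity---no elliptic estimate or integration by parts is required. Once that reduction is inserted before your analysis of $\dbar T_\delta f$, the remainder of your argument (the reduction to the signed distance, the normal-flow diffeomorphism, and the verification that the $\dbarstar$-boundary condition is preserved because $\nabla\rho$ is constant along the flow) closes in essentially the same way the paper's proof does.
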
	
\begin{proof} Since $C^1_{(0, q)}(\ol{\Omega})\cap\Dom(\dbarstar_\Omega)$ is dense in $\Dom(Q_\Omega)$ in the graph norm $(\|f\|^2_\Omega +Q_\Omega(f, f))^{1/2}$ (see, e.g., \cite[Lemma~4.3.2]{ChenShaw99}), it is sufficient to work on forms in $C^1_{(0, q)}(\ol{\Omega})\cap\Dom(\dbarstar_\Omega)$. Let $f\in C^1_{(0,q)}(\ol{\omz})\cap\Dom(\dbarstar_\Omega)$.
	Write $$f^\nu=f_{N}\wedge\dbar\rho,\quad f^\tau=f-f^\nu,$$
	where
	$$f_N:=(\dbar\rho)^*\lrcorner f=\sumprime_{|K|=q-1}\left(\sum_{j=1}^{n}\frac{\partial\rho}{\partial z_j}f_{jK}\right)d\bar{z}_K.
	$$ 
	Notice that $f_N=0$ on $\partial\Omega$ and $(\dbar\rho)^*\lrcorner f^\tau=0$ on $\Omega$. We extend $f_N$ to be 0 outside of $\Omega$. 
	Let $\{U^l\}_{l=0}^m$ be an open covering of $\ol{\Omega}$ and let $\{\psi^l\}_{l=0}^m$ be a partition of unity subordinated to the covering as in the setup preceding Lemma~\ref{lm:hatcheck}.  Set
	\begin{align}
			&f_{\delta}^{-\nu} (z)=\psi^0(z)f_N(z)\wedge\dbar\rho+\sum_{l=1}^{m}\psi^l(z)f_N(z+2\delta\vv{n}^l)\wedge\dbar\rho \notag\\
	\intertext{and}
			&f_{\delta}^{-\tau}(z)=f^\tau(z). \notag
		\end{align}
for $z\in\omz^-_\delta$. Define $T^-_\delta f=f_\delta^{-\nu}+ f_\delta^{-\tau}$. Since
	$$ (\dbar\rho^-_\delta)^*\lrcorner T^-_\delta f=(\dbar\rho)^*\lrcorner f^\tau+(\dbar\rho)^*\lrcorner f_\delta^{-\nu}=\psi^0(z)f_N(z)+\sum_{l=1}^{m}\psi^l(z)f_N(z+2\delta\vv{n}^l)=0
	$$
	on $\partial\Omega^-_\delta$, we have $T^-_{\delta} f\in\Dom(Q_{\omz^-_\delta})$. Furthermore,  
\[
\|T^-_{\delta} f-f\|_{\omz^-_\delta}+\|\dbar T^-_{\delta} f-\dbar f\|_{\omz^-_\delta}+\|\vartheta T^-_{\delta} f-\vartheta f\|_{\omz^-_\delta}\to 0
\]
as $\delta\to 0^+$.  The proof for the case  $\Omega^-_\delta$ then follows along the same lines as in the proof of Theorem~\ref{prop5}. For $\Omega^+_\delta$, we set
  	\begin{align}
  		&f_{\delta}^{+\nu}(z)=f^\nu(z)\notag\\
\intertext{and}
  			&f_{\delta}^{+\tau} (z)=\psi^0(z)f^\tau(z)+\sum_{l=1}^{m}\psi^l(z)f^\tau (z-2\delta\vv{n}^l) \notag \\
  	\end{align}
and define $T^+_\delta f=f_{\delta}^{+\tau}+f_\delta^{+\nu}$, and then proceed similarly.
\end{proof}

\section{Lower semi-continuity and  property ($P$)}\label{sec:lower-semi}

Property ($P$) was introduced by Catlin as a potential theoretic sufficient condition for compactness of the inverse of the $\dbar$-Neumann Laplacian on bounded pseudoconvex domains in $\C^n$.  A compact set $K\subset\C^n$ is said to satisfy Property ($P$) if for any $M>0$, there exists a neighborhood $U$ of $K$ and a function $\varphi\in C^\infty(U)$ such that $0\le \varphi \le 1$ and any eigenvalue of the hermitian matrix $(\partial^2\varphi/\partial z_j\partial\bar{z}_k)_{j, k=1}^n$ is greater than or equal to $M$ on $U$. It is said to satisfy Property ($P_q$), $1\le q\le n$, if any sum of $q$ eigenvalues 
of the hermitian metric is greater than or equal to $M$ on $U$. We start with the following well-known lemma.

\begin{lem}\label{P}
	Let $\omz$ be a bounded pseudoconvex domain in $\C^n$ and $b\in C^2(\ol\omz)$ with $-1\le b\le 0$. Then 
	\begin{equation}\label{cat}
	Q_\omz(f,f)\ge \frac{1}{e}\int_\omz H_q(b)(f) dV
	\end{equation}
	for all $f\in\Dom(Q_{q, \omz})$, where $$
	H_q(b)(f)=\mathop{\sumprime}\limits_{|K|=q-1}\sum\limits_{j,k=1}^n\dfrac{\partial^2 b}{\partial z_j\partial \bar z_k}f_{jK}\bar f_{kK}
	$$ 
	is the complex Hessian of $b$, acting on $(0, q)$-forms $f=\sumprime_{|J|=q} f_J\, d\bar z_J$.
\end{lem}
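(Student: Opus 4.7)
The approach is Catlin's classical weighted Morrey--Kohn--H\"ormander (MKH) argument. The factor $1/e$ in \eqref{cat} originates from the oscillation bound $1\le e^{-b}\le e$ implied by $-1\le b\le 0$.

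First I would reduce to smooth forms by the density of $C^\infty_{(0,q)}(\overline{\Omega})\cap\Dom(\dbarstar)$ in $\Dom(Q_{q,\Omega})$ under the graph norm (\cite[Lemma~4.3.2]{ChenShaw99}), so that it suffices to prove \eqref{cat} for $f\in C^\infty_{(0,q)}(\overline{\Omega})\cap\Dom(\dbarstar)$. Then I would apply the MKH identity on the pseudoconvex domain $\Omega$ in the weighted Hilbert space $L^2_{(0,q)}(\Omega,e^{-b}\,dV)$. Since $\Omega$ is pseudoconvex, the boundary Levi-form contribution is non-negative; the interior Friedrichs-type term is also non-negative; and after dropping both one obtains
\begin{equation*}
\int_\Omega\bigl(|\dbar f|^2+|\dbarstar_b f|^2\bigr)\,e^{-b}\,dV\;\ge\;\int_\Omega H_q(b)(f)\,e^{-b}\,dV,
\end{equation*}
where $\dbarstar_b$ denotes the formal adjoint of $\dbar$ in the weighted inner product.

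It remains to convert this weighted inequality into the unweighted statement of \eqref{cat}. On the right-hand side, $e^{-b}\ge 1$ yields $\int_\Omega H_q(b)(f)\,e^{-b}\,dV\ge\int_\Omega H_q(b)(f)\,dV$ (the inequality \eqref{cat} is trivially true at points where $H_q(b)(f)$ is non-positive). On the left-hand side, $e^{-b}\le e$, and the weighted adjoint relates to the usual one by $\dbarstar_b=\dbarstar+(\dbar b)^\sharp\lrcorner(\cdot)$, so the zeroth-order cross-term produced in the comparison must be absorbed. A cleaner packaging is to work with the auxiliary function $\psi:=e^b\in[1/e,1]$: from
\begin{equation*}
\frac{\partial^2\psi}{\partial z_j\partial\bar z_k}=\psi\left(\frac{\partial^2 b}{\partial z_j\partial\bar z_k}+\frac{\partial b}{\partial z_j}\frac{\partial b}{\partial\bar z_k}\right),
\end{equation*}
one has $H(\psi)=\psi H(b)+\psi(\dbar b)\otimes(\dbar b)^{\ast}$ with positive semidefinite rank-one correction, whence $H_q(\psi)(f)\ge\psi\,H_q(b)(f)\ge e^{-1}H_q(b)(f)$ pointwise. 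This realises the $1/e$ factor automatically while absorbing the $\dbar b$ cross-terms.

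The main obstacle is precisely this bookkeeping: matching the weighted $\dbarstar_b$ with the unweighted $\dbarstar$ while tracking the algebraic contributions of $\dbar b$. The substitution $\psi=e^b$ is designed to package those cross-terms as the non-negative rank-one perturbation in $H(\psi)$ on the right-hand side, producing the clean constant $1/e$ without any further absorption argument.
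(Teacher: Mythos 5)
Your overall route (weighted Morrey--Kohn--H\"ormander plus a conversion to the unweighted form) is indeed the route behind the references the paper invokes for this lemma, but as written the sketch has a genuine gap exactly at the step that constitutes the actual content of the proof: the conversion. The weighted inequality you state controls $\int_\Omega H_q(b)(f)e^{-b}\,dV$ by $\|\dbar f\|_b^2+\|\dbarstar_b f\|_b^2$, and $\dbarstar_b f=\dbarstar f+(\dbar b)^*\lrcorner f$ differs from $\dbarstar f$ by a zeroth-order term whose coefficient is $\nabla b$, which is \emph{not} bounded by the hypotheses (only $\|b\|_\infty\le 1$ is assumed). Introducing $\psi=e^{b}$ does not absorb this ``automatically'': the nonnegative rank-one piece $\psi\,\partial b\otimes\overline{\partial b}$ lives on the curvature side of the identity, while the uncontrolled cross term lives on the adjoint side, and you never specify an estimate in which $H_q(\psi)$ actually meets the \emph{unweighted} $Q_\Omega(f,f)$ (running the weighted estimate with weight $\psi$ reintroduces a weighted adjoint with the same defect). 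Making the two cancel is precisely the classical computation behind Catlin's (2.3)/Boas--Straube (2-10)/Straube's Corollary 2.13: one runs the twisted estimate with twist $a=A-e^{b}$ (equivalently weight $-\log a$), so that the surplus term $e^{b}\,\sum_K'\,|\sum_j b_j f_{jK}|^2$ coming from $H_q(e^{b})$ absorbs, with no room to spare, the cross term $2\,\mathrm{Re}\,\langle \vartheta f,\,e^{b}(\dbar b)^*\lrcorner f\rangle$, yielding $Q_\Omega(f,f)\ge \int_\Omega e^{b}H_q(b)(f)\,dV$ and hence the factor $1/e$. In addition, both of your pointwise conversions ($e^{-b}\ge 1$ applied to the Hessian integrand, and $\psi H_q(b)(f)\ge e^{-1}H_q(b)(f)$) are valid only where $H_q(b)(f)\ge 0$, which is not assumed; the parenthetical claim that the inequality is ``trivially true at points where $H_q(b)(f)$ is non-positive'' has no meaning for an integrated inequality and cannot replace the correct bookkeeping.

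There is a second gap of scope: the lemma is stated for an \emph{arbitrary} bounded pseudoconvex domain, with no boundary regularity. Your reduction to forms smooth up to the boundary (Chen--Shaw Lemma~4.3.2) and your appeal to a nonnegative boundary Levi term both require a smooth (or at least sufficiently regular) boundary. The paper's own proof is essentially a citation that addresses exactly these two points: Catlin/Boas--Straube for the smooth case (where the twisted computation above is carried out), and Straube's non-smooth result, or alternatively an exhaustion of $\Omega$ from inside by smooth pseudoconvex domains combined with the regularization operator of Lemma~\ref{lm:T} to transfer the estimate (mere restriction of $f$ to a subdomain does not stay in $\Dom(\dbarstar)$). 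To make your proposal complete you would need to carry out the cancellation argument explicitly and add such an approximation step for the non-smooth case.
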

\begin{proof}
	When $\partial\omz$ is smooth, the above  lemma is essentially due to Catlin (see (2.3) in \cite{Catlin84}; see also (2-10) in \cite{BoasStraube99}). When no boundary smoothness is assumed,  the lemma was proved in \cite{Straube97} (see also \cite[Corollary~2.13]{Straube10}). It can also be proved by exhausting $\omz$ from inside by pseudoconvex domains with smooth boundaries and applying Lemma~\ref{lm:T}.
\end{proof}

\begin{lem}\label{lem18}
	Let $\omz$ be a bounded pseudoconvex domain in $\C^n$. Suppose that $\partial\omz$ satisfies property $(P_q)$. Then for any $\eps>0$, there exists a $\delta>0$, such that for any pseudoconvex domain $\omz_j$ with $d_H(\omz, \omz_j)<\delta$, we have
	\begin{align}\label{41}
	\|f_j\|^2_{A_{j\delta}}\leq \eps^2 Q_{\omz_j}(f_j,f_j)
	\end{align}
	for all $f_j\in\Dom(Q_{q, \omz_j})$, where $A_{j\sigma}=\{z\in\omz_j|\dist(z,\partial\omz_j)<\sigma\}$.
	Furthermore,  if $f_j$ is an eigenform of $\Box_{\omz_j}$ with associated eigenvalue $\lambda(\omz_j)$, then 
	\begin{equation}\label{41a}
	\|\dbar_j f_j\|^2_{A_{j\delta}}\leq \eps^2\lambda^2(\omz_j)\|f_j\|_{\omz_j}^2.
	\end{equation} 
	Moreover, if $\partial\Omega$ satisfies property $(P_{q-1})$, then 
	\begin{equation}\label{41b}
	\|\dbarstar_j f_j\|^2_{A_{j\delta}}\leq \eps^2\lambda^2(\omz_j)\|f_j\|_{\omz_j}^2.
	\end{equation}	
\end{lem}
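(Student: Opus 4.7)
The plan is to derive \eqref{41} by applying Lemma~\ref{P} on $\omz_j$ to a bounded function constructed out of the data supplied by property $(P_q)$ of $\partial\omz$, and then to bootstrap to \eqref{41a} and \eqref{41b} by applying \eqref{41} at form levels $q+1$ and $q-1$ to the forms $\dbar_j f_j$ and $\dbarstar_j f_j$, respectively.

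For \eqref{41}, I would fix $M=e/\eps^2$ and invoke property $(P_q)$ of $\partial\omz$ to produce a neighborhood $U$ of $\partial\omz$ and a plurisubharmonic $\varphi\in C^\infty(U)$ with $0\le\varphi\le 1$ whose complex Hessian has every sum of $q$ eigenvalues at least $M$ on $U$. Fix $V\subset\subset U$ containing $\partial\omz$. Since $d_H(\omz,\omz_j)<\delta$ forces $\partial\omz_j$ to lie within $\delta$ of $\partial\omz$, for sufficiently small $\delta$ the tube $A_{j\delta}$ sits inside $V$ uniformly in $j$. I would then patch $\varphi$ with a bounded plurisubharmonic background function on $\omz_j$ (an affine rescaling of $|z-z_0|^2$ taking values in $[-1,0]$, say) by means of a cut-off $\chi$ with $\chi\equiv 1$ on $V$ and $\supp\chi\subset U$, producing $b_j\in C^2(\ol{\omz_j})$ with $-1\le b_j\le 0$, plurisubharmonic on $\omz_j$, and $H_q(b_j)(u)\ge M|u|^2$ on $A_{j\delta}$. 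Lemma~\ref{P} applied on $\omz_j$ will then yield
\[
Q_{\omz_j}(f_j,f_j)\ge\frac{1}{e}\int_{\omz_j}H_q(b_j)(f_j)\,dV\ge\frac{M}{e}\|f_j\|^2_{A_{j\delta}}=\frac{1}{\eps^2}\|f_j\|^2_{A_{j\delta}},
\]
which is \eqref{41}.

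For \eqref{41a}, observe that when $\Box_{\omz_j}f_j=\lambda(\omz_j)f_j$, the form $\dbar_j f_j$ lies in $\Dom(Q_{q+1,\omz_j})$: indeed $\dbar_j(\dbar_j f_j)=0\in L^2$ and $\dbarstar_j\dbar_j f_j=\lambda(\omz_j)f_j-\dbar_{j,q-1}\dbarstar_{j,q-1}f_j\in L^2$. Plurisubharmonicity of $\varphi$ upgrades property $(P_q)$ to property $(P_{q+1})$ (the smallest sum of $q+1$ nonnegative eigenvalues dominates the smallest sum of $q$), so \eqref{41} applies at level $q+1$ to $\dbar_j f_j$ and gives
\[
\|\dbar_j f_j\|^2_{A_{j\delta}}\le\eps^2 Q_{q+1,\omz_j}(\dbar_j f_j,\dbar_j f_j)=\eps^2\|\dbarstar_{j,q}\dbar_j f_j\|^2.
\]
The Hodge orthogonality $\range(\dbar_{j,q-1})\perp\range(\dbarstar_{j,q})$ in $L^2_{(0,q)}$ combined with $\Box_{\omz_j} f_j=\lambda(\omz_j) f_j$ yields $\|\dbar_j\dbarstar_j f_j\|^2+\|\dbarstar_j\dbar_j f_j\|^2=\lambda(\omz_j)^2\|f_j\|^2$, hence $\|\dbarstar_j\dbar_j f_j\|\le\lambda(\omz_j)\|f_j\|$, which is \eqref{41a}. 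The proof of \eqref{41b} proceeds in the same way at level $q-1$: the hypothesis $(P_{q-1})$ supplies the requisite Hessian function, and $\dbarstar_j f_j\in\Dom(Q_{q-1,\omz_j})$ (membership in $\Dom(\dbarstar_{j,q-2})$ follows from $\dbarstar^2=0$ together with $f_j\in\Dom(\dbarstar_{j,q-1})$).

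The main obstacle is the construction of $b_j$ on $\ol{\omz_j}$ from the locally defined $\varphi$. The cut-off $\chi$ introduces cross-terms in the Hessian of $\chi\varphi$ of the form $\nabla\chi\cdot\nabla\varphi$, and $\nabla\varphi$ may grow with $M$ so these terms are not obviously dominated. Catlin's original argument handles this by producing $\varphi$ with a self-bounded gradient $|\nabla\varphi|^2\lesssim H_q(\varphi)$, so that once $M$ is taken large and a sufficiently convex background term is added on the transition region $\supp\chi\setminus V$, plurisubharmonicity of $b_j$ is preserved. The crucial geometric point for the uniformity in $j$ is that the Hausdorff closeness places every $A_{j\delta}$ inside a single fixed $V$, so one and the same $\varphi$ and $\chi$ serve for all nearby $\omz_j$.
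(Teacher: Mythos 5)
Your proposal is correct and follows essentially the same route as the paper: produce a property-$(P_q)$ function with complex Hessian at least $e/\eps^2$ on a fixed neighborhood $U$ of $\partial\Omega$, note that Hausdorff closeness places $A_{j\delta}$ inside that neighborhood uniformly in $j$, apply Lemma~\ref{P} on $\Omega_j$, and then bootstrap to $\dbar_j f_j$ and $\dbarstar_j f_j$ at degrees $q\pm 1$ using that $(P_q)$ implies $(P_{q+1})$ and the eigenform structure. The only differences are cosmetic: you make explicit the cut-off/self-bounded-gradient patching needed to get a globally defined $b_j\in C^2(\overline{\Omega_j})$ (the paper simply applies Lemma~\ref{P} with the integral restricted to $\Omega_j\cap U$), and you derive \eqref{41a}--\eqref{41b} from $\|\dbar_j\dbarstar_j f_j\|^2+\|\dbarstar_j\dbar_j f_j\|^2=\lambda^2\|f_j\|^2$ rather than from the commutation $\Box_{\Omega_j}\dbar_j f_j=\lambda(\Omega_j)\dbar_j f_j$ used in the paper; both are valid.
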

\begin{proof}
	For any $\eps>0$, since $\partial\omz$ satisfies property $(P_q)$, there exists a neighborhood $U$ of $\partial\omz$ and $b\in C^\infty(U)$ with $-1<b\le0$ such that
	\begin{align}
	\sumprime_{K}\sum_{j,k=1}^{n}\dfrac{\partial^2 b}{\partial z_j\partial\bar{z}_k}f_{jK}\bar{f}_{kK}\ge \dfrac{e}{\eps^2}|f|^2
	\end{align}
	for any $(0,q)$-form $f$ on $U$. Let $\delta=\frac{1}{2} \dist(\partial\omz,\partial U)$. If $d_H(\Omega, \Omega_j)<\delta$, then $\partial\omz_j\subset U$ and $A_{j\delta}\subset U\cap\Omega_j$. Applying Lemma \ref{P} to $\omz_j$ and $f_j$, we have
	\begin{equation}\label{j}
	\int_{A_{j\delta}}|f_j|^2dV\le\int_{\omz_j\cap U}|f_j|^2dV\leq \frac{\eps^2}{e}\int_{\omz_j\cap U}H_q(b)(f_j)dV\le \eps^2\,Q_{\omz_j}(f_j,f_j).
	\end{equation}
	This concludes the proof of \eqref{41}. 
	
	To prove \eqref{41a} and \eqref{41b}, we first note that if $f_j$ is an eigenform for $\Box_{\Omega_j}$ associated with eigenvalue $\lambda(\Omega_j)$, then 
	\begin{align}
	\dbarstar_j\dbar_j f_j&=\lambda(\omz_j) f_j-\dbar_j\dbarstar_j f_j\in\Dom(\dbar_{j})\\
	\intertext{and}
	 \dbar_j\dbarstar_j f_j&=\lambda(\omz_j) f_j-\dbarstar_j\dbar_j f_j\in\Dom(\dbarstar_j).
	\end{align} 
   Moreover, 
	\begin{align}
	\Box_{\omz_j}\dbar_jf_j&=\dbar_j\Box_{\Omega_j} f_j=\lambda(\omz_j)\dbar_j f_j\\
	\intertext{and} 
	\Box_{\omz_j}\dbarstar_jf_j&=\dbarstar_j\Box_{\Omega_j}f_j=\lambda(\omz_j)\dbarstar_j f_j.
	\end{align} 
	Since $\partial\Omega$ satisfies property ($P_q$), it also satisfies property ($P_{q+1}$). Therefore, applying \eqref{41} to $\dbar_j f_j$, we have
	\begin{align*}
	\|\dbar_j f_j\|^2_{A_{j\delta}}&\leq \eps^2Q_{\omz_j}(\dbar_j f_j,\dbar_j f_j) =\eps^2\lambda(\omz_j)\,\|\dbar_j f_j\|^2_{\omz_j}\\
	&\leq \eps^2\lambda(\omz_j)\,Q_{\omz_j}(f_j,f_j)=\eps^2\lambda^2(\omz_j)\|f_j\|_{\omz_j}^2.
	\end{align*}
	Similarly, when $\partial\Omega$ satisfies property ($P_{q-1}$), we have
	$$\|\dbarstar_jf_j\|^2_{A_{j\delta}}\le \eps^2Q_{\omz_j}(\dbarstar_j f_j,\dbarstar_j f_j)\le \eps^2\lambda^2(\omz_j)\|f_j\|_{\omz_j}^2.$$ 
	This concludes the proof of Lemma~\ref{lem18}.
\end{proof}

\begin{lem}\label{Pj}
	$\omz$ is a bounded pseudoconvex domain in $\C^n$ that satisfies property $(P_q)$. Let $M$ be a positive constant. Let $\{\omz_j\}$ be a family of pseudoconvex domains such that $d_H(\omz,\omz_j)\to0$ as $j\to\infty$. Suppose $f_j\in\Dom(Q_{\omz_j})$ is a sequence of $(0,q)$-forms such that $\|f_j\|^2_{\omz_j}+Q_{\omz_j}(f_j,f_j)\le M$. Let $\tf_j$ be the extension of $f_j$ to 0 outside of $\omz_j$. Then $\{\tf_j\}$ is a pre-compact family in $L^2_{(0,q)}(\C^n)$.
\end{lem}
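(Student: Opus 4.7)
The plan is to combine two ingredients. Property $(P_q)$, through Lemma~\ref{lem18}, supplies uniform smallness of $\|f_j\|$ in a boundary strip of fixed width. Interior ellipticity of $\dbar\oplus\vartheta$ supplies a uniform $H^1$-bound of $f_j$ on compact subsets of $\omz$, which by Rellich's theorem gives $L^2$-precompactness on the interior. A diagonal extraction then pieces these together into an $L^2_{(0,q)}(\C^n)$-convergent subsequence of $\{\wt f_j\}$.

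Concretely, fix $\eps>0$ and apply Lemma~\ref{lem18} with parameter $\eps/\sqrt M$ to obtain $\delta_0>0$ such that $\|f_j\|^2_{A_{j\delta_0}}\le (\eps^2/M)\,Q_{\omz_j}(f_j,f_j)\le\eps^2$ whenever $d_H(\omz,\omz_j)<\delta_0$. A short use of the Hausdorff-closeness relations gives $\omz_j\setminus\omz^-_{\delta_0/3}\subset A_{j\delta_0}$ once $d_H(\omz,\omz_j)<\delta_0/3$, hence $\|\wt f_j\|^2_{\C^n\setminus\overline{\omz^-_{\delta_0/3}}}\le\eps^2$ uniformly for all sufficiently large $j$.

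For the interior estimate, choose a cutoff $\chi\in C^\infty_c(\omz)$ with $\chi\equiv 1$ on $\omz^-_{\delta_0/3}$ and $\supp\chi\subset\omz^-_{\delta_0/6}$. For $j$ large, $\supp\chi\subset\subset\omz_j$, so $\chi f_j$ extends by zero to a compactly supported element of $L^2_{(0,q)}(\C^n)$ with $\dbar(\chi f_j)=\dbar\chi\wedge f_j+\chi\dbar f_j$ and the analogous expression for $\vartheta(\chi f_j)$ both in $L^2(\C^n)$. The flat Kohn--Morrey identity, combined with integration by parts equating $\|\partial u_J/\partial z_k\|$ and $\|\partial u_J/\partial\bar z_k\|$ for compactly supported components, yields
$$\|u\|_{H^1(\C^n)}^2\le C\bigl(\|\dbar u\|^2+\|\vartheta u\|^2+\|u\|^2\bigr)$$
for every smooth compactly supported $(0,q)$-form $u$ on $\C^n$. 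Applying this to a mollification of $\chi f_j$ and passing to the limit gives $\|\chi f_j\|_{H^1(\C^n)}\le C(\chi)\sqrt M$ uniformly in $j$, so by Rellich's theorem $\{\chi f_j\}$, and in particular $\{f_j\vert_{\omz^-_{\delta_0/3}}\}$, is precompact in $L^2$.

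Combining the two, a standard diagonal extraction over $\eps=1/m$ with correspondingly shrinking $\delta_m$ produces a subsequence of $\{\wt f_j\}$ that is Cauchy in $L^2_{(0,q)}(\C^n)$: the restrictions to $\omz^-_{\delta_m/3}$ converge in $L^2$ by Rellich, while the $L^2$-mass on the complementary strip is at most $1/m$ uniformly. The main obstacle I anticipate is the uniform interior $H^1$-bound: each $f_j$ satisfies the $\dbar$-Neumann condition only on its own varying boundary $\partial\omz_j$, so no uniform regularity up to the boundary can be expected; the cutoff $\chi$ localizes the problem to the interior, where $\dbar\oplus\vartheta$ is simply elliptic and the universal constant in the flat Kohn--Morrey identity supplies the required uniform control.
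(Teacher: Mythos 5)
Your proof is correct and follows essentially the same route as the paper: uniform smallness of $\|f_j\|$ in a boundary strip via Lemma~\ref{lem18}, a cutoff plus interior ellipticity (the flat Kohn--Morrey identity) giving a uniform $W^1$-bound and hence Rellich precompactness on the interior, and a diagonal extraction to assemble a convergent subsequence. The only difference is cosmetic---you spell out the interior $H^1$ estimate and the diagonalization, which the paper leaves more implicit---so no further comparison is needed.
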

\begin{proof}
	For any $\eps>0$, it follows from (\ref{j}) that there exist a neighborhood $U$ of $ \partial\omz$ such that 
	$$\int_{U}|\tf_j|^2dV=\int_{\omz_j\cap U}|f_j|^2dV\leq  \eps^2\,Q_{\omz_j}(f_j,f_j)\le \eps^2 M$$
	for sufficiently large $j$.
	Let $V\subset\subset U$ be a neighbourhood of $\partial\omz$. Choosing sufficiently large $j$ such that $(\omz\setminus\omz_j)\cup(\omz_j\setminus\omz)\subset V$. Let $\eta\in C_0^{\infty}(\C^n)$ with $0\le\eta\le1$, $\eta\equiv1$ on $\omz\setminus U$ and $\supp\eta\subset\omz\setminus V$. Then there exists a constant $M_1$, such that $Q_{\omz_j}(\eta f_j,\eta f_j)\le M_1$ and hence $\|\eta \tf_j\|_{W^1(\omz)}\le M_1$.  (Hereafter $\|f\|_{W^\alpha}$ denotes the norm of $L^2$-Sobolev space of order $\alpha$.)  By Rellich's compactness theorem, $\{\eta\tf_j\}$ has a subsequence $\{\eta\tf_{j_l}\}$ that conveges in $L^2_{(0,q)}(\omz)$. Thus
	\begin{align*}
    \|\tf_{j_h}-\tf_{j_l}\|_{\C^n}&=\|\tf_{j_h}-\tf_{j_l}\|_{U}+\|\tf_{j_h}-\tf_{j_l}\|_{\omz\setminus U}\\
    &\le 2M^{1/2}\eps+\|\eta\tf_{j_h}-\eta\tf_{j_l}\|_{\omz},
	\end{align*}
	when $h$ and $l$ are sufficiently large.
	Thus $\{\tf_{j_l}\}$ is a subsequence of $\{\tf_j\}$ that converges in $L^2_{(0,q)}(\C^n)$.
\end{proof}

\begin{remark}\label{remark2}
	Let  $f_j$ be an eigenform associated with $k^{\rm th}$ eigenvalue $\lambda_k(\Omega_j)$ of $\Box_{\Omega_j}$.  From the proof of Lemma \ref{lem18}, we know that $\dbar_jf_j$ is also an eigenform of $\Box_{\omz_j}$.  Moreover, 
	$$
	\|\dbar_j f_j\|^2_{\omz_j}+Q_{\omz_j}(\dbar_jf_j,\dbar_jf_j)\le \lambda_k(\omz_j)(1+\lambda_k(\omz_j)) \|f_j\|^2_{\Omega_j},
	$$
	which, by Corollary~\ref{cor1}, is bounded from above by a constant independent of $j$. Therefore, $\{\widetilde{\dbar_j f_j}\}$ is also a pre-compact family in $L^2_{(0,q+1)}(\C^n)$.  Similarly, when $\partial\Omega$ satisfies property $(P_{q-1})$,	$\{\va \tf_j\}$ is also a pre-compact family in $L^2_{(0, q-1)}(\C^n)$.
\end{remark}

\begin{thm}\label{cor3}
	Let	$\omz$ be a bounded pseudoconvex domains in $\C^n$ with $C^1$ boundary that satisfies Property $(P_{q-1})$, $2\le q\le n-1$. Let $\omz_j$ be a sequence of bounded pseudoconvex domains whose $\dbar$-Neumann Laplacian $\Box_{\Omega_j}$ has purely discrete spectrum on $(0, q)$-forms. If $d_H(\omz_j, \omz)\to 0$ as $j\to\infty$, then for any $k\in\nb$, 
	\begin{equation}\label{PP}
	\liminf_{j\to\infty}\lambda^q_k(\Omega_j)\ge \lambda^q_k(\Omega).
	\end{equation}
\end{thm}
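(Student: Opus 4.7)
The plan is to produce a $k$-dimensional trial subspace of $\Dom(Q_{q,\Omega})$ whose Rayleigh quotient is bounded above by $\liminf_{j\to\infty}\lambda_k^q(\Omega_j)$, and then conclude via the min-max principle \eqref{eq:minmax}. Pass to a subsequence of $\{\Omega_j\}$ (not relabeled) along which $\lambda_k^q(\Omega_j)$ converges to this liminf. Since $\Box_{\Omega_j}$ has purely discrete spectrum on $(0,q)$-forms, choose for each $j$ orthonormal eigenforms $f_{1,j},\dots,f_{k,j}$ with $\Box_{\Omega_j} f_{l,j}=\lambda_l^q(\Omega_j) f_{l,j}$; then $Q_{\Omega_j}(f_{l,j},f_{m,j})=\lambda_l^q(\Omega_j)\delta_{lm}$, and Theorem~\ref{prop5} supplies a uniform upper bound on each $\lambda_l^q(\Omega_j)$. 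Because property $(P_{q-1})$ implies $(P_q)$ and $(P_{q+1})$ (by the same reasoning used in the proof of Lemma~\ref{lem18}), Lemma~\ref{Pj} applied to $f_{l,j}$ and, via Remark~\ref{remark2}, to $\dbar_j f_{l,j}$ and $\dbarstar_j f_{l,j}$, guarantees that the extensions by zero $\widetilde f_{l,j}$, $\widetilde{\dbar_j f_{l,j}}$, and $\vartheta\widetilde f_{l,j}=\widetilde{\dbarstar_j f_{l,j}}$ are pre-compact families in $L^2(\C^n)$. A diagonal extraction then yields a further subsequence along which $\lambda_l^q(\Omega_j)\to\mu_l$ for every $l=1,\dots,k$ and $\widetilde f_{l,j}\to F_l$, $\widetilde{\dbar_j f_{l,j}}\to G_l$, $\vartheta\widetilde f_{l,j}\to H_l$ in $L^2(\C^n)$.

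Next I identify the limits. Because $\widetilde f_{l,j}$, $\widetilde{\dbar_j f_{l,j}}$, and $\widetilde{\dbarstar_j f_{l,j}}$ are supported in $\overline{\Omega_j}\subset\overline{\Omega^+_\eps}$ for every fixed $\eps>0$ once $j$ is large, the limits $F_l,G_l,H_l$ are supported in $\overline\Omega$. Set $f_l=F_l|_\Omega\in L^2_{(0,q)}(\Omega)$. Testing $F_l$ against $\phi\in\mathcal D_{(0,q+1)}(\Omega)$ (which eventually has compact support inside $\Omega_j$) and passing to the limit yields $\dbar f_l=G_l|_\Omega$ distributionally on $\Omega$; closedness of the formal adjoint $\vartheta$ on $\C^n$ gives $\vartheta F_l=H_l$ there, and since both sides are supported in $\overline\Omega$ this is equivalent to $f_l\in\Dom(\dbarstar_\Omega)$ with $\dbarstar f_l=H_l|_\Omega$. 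Hence $f_l\in\Dom(Q_{q,\Omega})$. The support property also produces $\|f_l\|_\Omega=\|F_l\|_{\C^n}=\lim_j\|f_{l,j}\|_{\Omega_j}=1$ and $\langle f_l,f_m\rangle_\Omega=\delta_{lm}$, together with
\[
Q_\Omega(f_l,f_m)=\langle G_l,G_m\rangle_{\C^n}+\langle H_l,H_m\rangle_{\C^n}=\lim_j Q_{\Omega_j}(f_{l,j},f_{m,j})=\mu_l\delta_{lm}.
\]

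Let $L_k=\Span\{f_1,\dots,f_k\}$. For $u=\sum_l c_l f_l\in L_k$ one has $\|u\|_\Omega^2=\sum_l|c_l|^2$ and $Q_\Omega(u,u)=\sum_l\mu_l|c_l|^2\le\mu_k\|u\|_\Omega^2$, and \eqref{eq:minmax} then gives $\lambda_k^q(\Omega)\le\mu_k=\liminf_{j\to\infty}\lambda_k^q(\Omega_j)$, which is \eqref{PP}.

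The main difficulty is the identification step: showing that the $L^2(\C^n)$-limits of the extensions by zero arise as restrictions of forms genuinely lying in $\Dom(Q_{q,\Omega})$, and that no mass of $f_{l,j}$, $\dbar_j f_{l,j}$, or $\dbarstar_j f_{l,j}$ is lost across $\partial\Omega$ in the limit. Property $(P_{q-1})$ is exactly what makes this work: via Lemmas~\ref{lem18} and \ref{Pj} it provides uniform boundary-concentration estimates that simultaneously yield compactness and pin the limiting supports inside $\overline\Omega$. Without such a hypothesis the eigenforms could concentrate in boundary strips $\Omega_j\setminus\Omega$ and lower semi-continuity would generally fail.
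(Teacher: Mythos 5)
Your proposal is correct, but it follows a genuinely different route from the paper. The paper builds an explicit transition operator $T_{j\delta}f_j=\dbarstar N\dbar\widehat f_{j\delta}+\dbar N\dbarstar\widecheck f_{j\delta}$ (push-out/push-in of the eigenforms regularized through the canonical solution operators on $\Omega$) and then feeds the resulting quantitative comparisons into the abstract Lemma~\ref{first}, using the pre-compactness from Lemma~\ref{Pj} together with the Kolmogorov--Riesz theorem only to make the translation errors small uniformly in $j$. You instead use the same compactness input (Lemma~\ref{Pj} and Remark~\ref{remark2}, with $(P_{q-1})\Rightarrow(P_q)\Rightarrow(P_{q+1})$, plus the uniform eigenvalue bound from Theorem~\ref{prop5}) to extract strong $L^2(\C^n)$ limits $F_l, G_l, H_l$ of the zero-extended eigenforms and of their $\dbar_j$- and $\dbarstar_j$-images, identify the restrictions $f_l=F_l|_\Omega$ as an orthonormal $k$-dimensional subspace of $\Dom(Q_{q,\Omega})$ with $Q_\Omega(f_l,f_m)=\mu_l\delta_{lm}$, and conclude by min-max. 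This is a clean soft-analysis argument that avoids the transition operator entirely; its trade-off is that it produces no rate, whereas the paper's operator-theoretic scheme is set up so that the same skeleton yields the quantitative estimates of Section~\ref{sec:finite-type}. One step you should spell out: the passage from ``$\vartheta F_l=H_l$ on $\C^n$ with both supported in $\overline\Omega$'' to ``$f_l\in\Dom(\dbarstar_\Omega)$ with $\dbarstar f_l=H_l|_\Omega$'' is the direction converse to the fact cited in the paper, and it is not a purely formal equivalence: one pairs against $v\in\Dom(\dbar_{q-1,\Omega})$ by approximating $v$ in the graph norm of $\dbar$ by forms smooth up to the boundary (a Friedrichs-type density lemma), which is exactly where the $C^1$ boundary hypothesis enters your identification step; with that sentence added, the argument is complete. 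The $\dbar$-side identification via interior test forms, the support localization of the limits, and the preservation of orthonormality and of the quadratic form in the limit are all handled correctly.
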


\begin{proof}
	 The proof is similar in some respects to Theorem \ref{prop5}. The difference here is to use Lemma~\ref{Pj} and the Kolmogorov-Riesz theorem to establish estimates that are uniform with regard to $j$.
	 
	 We first construct the transition operator $T_{j\delta}$ from $\Dom(Q_{\Omega_j})$ into $\Dom(Q_\Omega)$. Let $\{U^l\}_{l=0}^m$ be an open covering of $\ol{\Omega}$ and let $\{\psi^l\}_{l=0}^m$ be a partition of unity subordinated to this covering,  constructed as in the setup preceding Lemma~\ref{lm:hatcheck}.  Let $U=\cup_{l=1}^m U^l$ and let $V\subset\subset U$ be a tubular neighborhood of $\partial\Omega$ such that $\dist(\partial V, \partial\Omega)<\dist(\partial U, \partial\Omega)$. We assume that $j$ is sufficiently large so that $(\Omega\setminus\Omega_j)\cup (\Omega_j\setminus\Omega)\subset\subset V$.  Let $f_j\in\Dom(Q_{\Omega_j})$. For any $\delta<\dist(\partial V, \partial\Omega)$ and any sufficiently large $j$ such that  $\delta_j=d_H(\Omega_j, \Omega)<\delta$, we define 
	 \begin{equation}
	 	\hf_{j\delta}(z)=\psi^0(z)f_j(z)+\sum_{l=1}^{m}\psi^l(z)f_j(z-2\delta\vv{n}^l)	
	 \end{equation}
	 and
	 \begin{equation}
	 	\cf_{j\delta}(z)=\psi^0(z)\tf_j(z)+\sum_{l=1}^{m}\psi^l(z)\tf_j(z+2\delta\vv{n}^l).	
	 \end{equation} 
	 (Throughout this proof, we will use $\widetilde f_j$ to denote the form obtained by extending $f_j$ to $0$ outside of $\Omega_j$.) Notice that $z-2\delta \vv n^l\in\Omega_j$ and $z+2\delta\vv n^l\not\in\Omega_j$ for $z\in\Omega\cap U^l$ (see the proof of Lemma~\ref{lm:hatcheck}). It follows that $\hf_{j\delta}\in\Dom(\dbar_\omz)$ and $\cf_{j\delta}\in\Dom(\dbarstar_\omz)$.   Define
	 	$T_{j\delta}\colon \Dom(Q_{\Omega_j})\to \Dom(Q_\Omega)$  by
	 	\begin{align}
	 		T_{j\delta} f_j=\dbarstar N\dbar \widehat f_{j\delta}+\dbar N\dbarstar\cf_{j\delta}.
	 	\end{align}
	 Then
	 	\begin{equation}
	 			T_{j\delta}f_j=\dbarstar\dbar N \hf_{j\delta}+\dbar\dbarstar N\cf_{j\delta}
	 	\end{equation}		
	 	and		
	 	\begin{equation}
	 		\dbar T_{j\delta} f_j=\dbar\dbarstar\dbar N\hf_{j\delta}=\dbar\hf_{j\delta} \quad\text{and}\quad 
	 		\dbarstar T_{j\delta} f_j=\dbarstar\dbar\dbarstar N \cf_{j\delta}=\dbarstar \cf_{j\delta}.
	 	\end{equation}
	 
	 We first fix $1\le l\le k$ and let $f_{j}$ be the normalized eigenform of $\Box_{\Omega_j}$ associated eigenvalue $\lambda_l(\omz_j)$. Since
	 \[
	 T_{j\delta} f_j-\tf_j=\dbarstar\dbar N (\hf_{j\delta}-\tf_{j})+\dbar \dbarstar N(\cf_{j\delta}-\tf_j),
	 \]
	 we have
	 \[
	 \begin{aligned}
	 \|T_{j\delta} f_j -\tf_j\|_\Omega &\le \|\hf_{j\delta}-\tf_{j}\|_\Omega+\|\cf_{j\delta}-\tf_j\|_\Omega\\
	 &\le	\sum_{l=1}^m \left(\left\|\tf_j(z-2\delta\vv{n}^l)-\tf_j(z)\right\|_{\omz\cap U^l}
	 + \left\|\tf_j(z+2\delta\vv{n}^l)-\tf_j(z)\right\|_{\omz\cap U^l}\right) \\
	 &\le	\sum_{l=1}^m \left(\left\|\tf_j(z-2\delta\vv{n}^l)-\tf_j(z)\right\|_{\C^n}
	 + \left\|\tf_j(z+2\delta\vv{n}^l)-\tf_j(z)\right\|_{\C^n}\right).
	 \end{aligned}
	 \]
	 By Lemma~\ref{Pj} and the subsequent remark, $\{\tf_j\}$ is a pre-compact family in $L^2_{(0, q)}(\C^n)$. For any $0<\eps<1$, it the follows from the Kolmogorov-Riesz theorm that 
	 \[
	  \|T_{j\delta} f_j -\tf_j\|_\Omega<\eps
	 \]
	 for all sufficiently small $\delta$ and sufficiently large $j$.
	 
	 Furthermore, we have
	 \[
	 \begin{aligned}
	 &\|\dbar T_{j\delta} f_j -\widetilde{\dbar f_j}\|_\Omega =\|\dbar \widehat{f}_{j\delta} -\widetilde{\dbar f_j}\|_\Omega\\
	 &\quad\le C\sum_{l=1}^m \big(\big\|\tf_j(z-2\delta\vv{n}^l)-\tf_j(z)\big\|_{\omz\cap U^l}
	 + \big\|\dbar f_j(z-2\delta\vv{n}^l)-\widetilde{\dbar f_j}(z)\big\|_{\omz\cap U^l}\big) \\
	 &\quad\le C\sum_{l=1}^m \big(\big\|\tf_j(z-2\delta\vv{n}^l)-\tf_j(z)\big\|_{\C^n}
	 + \big\|\widetilde{\dbar f_j}(z-2\delta\vv{n}^l)-\widetilde{\dbar f_j}(z)\big\|_{\C^n}\big), 
	 \end{aligned}
	 \]
	 where the constant $C$ depends only on the partition of unity and is independent of $\delta$ or $j$. Note that $\widetilde{\dbarstar f_j}=\dbarstar\widetilde{f_j}$.  Using the pre-compactness of the families $\{\tf_j\}$ and $\{\widetilde{\dbar f_j}\}$ in $L^2$-spaces, we then have
	 \[
	 \|\dbar T_{j\delta} f_j -\widetilde{\dbar f_j}\|_\Omega<\eps
	 \]
	 for all sufficiently small $\delta$ and all sufficiently large $j$. Similarly, 
	 \[
	 \|\dbarstar T_{j\delta} f_j -\widetilde{\dbarstar f_j}\|_\Omega 
	 =\|\dbarstar \widecheck{f}_{j\delta} -\widetilde{\dbarstar f_j}\|_\Omega<\eps.
	 \]

	 The rest of the proof follows the same lines of arguments as in the proof of Theorem~\ref{prop5}. We sketch the proof below. Let $f_{jl}$ be the normalized eigenform of $\Box_{\Omega_j}$ associated with eigenvalue $\lambda_l(\omz_j)$. Let $L_{jk}$ be the $k$-dimensional linear subspace of $\Dom(Q_{\Omega_j})$ spanned by $\{f_{jl}\}_{l=1}^k$.  Let $f_j=\sum_{l=1}^k c_l f_{jl}$ be a $(0, q)$-form in $L_{jk}$.  Then
\begin{equation}\label{tp}
	\begin{aligned}
		\big|\|T_{j\delta} f_j\|^2_{\Omega} - \|f_j\|^2_{\Omega_j} \big|&=\big|\| T_{j\delta} f_j\|^2_{\Omega}-\|\widetilde{f_j}\|^2_{\Omega}-\|f_j\|^2_{\Omega_j\setminus\Omega}\big| \\
		&\le \big(\|T_{j\delta} f_j\|_{\Omega}+\|\widetilde{f_j}\|_{\Omega}\big)\|T_{j\delta} f_j-\widetilde{f_j}\|_{\Omega}+\|f_j\|^2_{\Omega_j\setminus\Omega}\\
		&\le C\eps \|f_j\|^2_{\Omega_j}
	\end{aligned}
\end{equation}	
	for all sufficiently small $\delta$ and sufficiently large $j$. Note that in the last inequality, we have used Lemma~\ref{lem18}.  Similarly, we have
	\[
	\big| Q_{\Omega}(T_{j\delta} f_j, T_{j\delta} f_j)-Q_{\Omega_j}(f_j, f_j)\big|\le C\eps \|f_j\|^2_{\Omega_j}.
	\]
	 The desired inequality \eqref{PP} then follows from Lemma~\ref{first} and the subsequent remark. \end{proof}	

Theorem~\ref{main2} is a direct consequence of Theorem~\ref{cor3} by {\it reductio ad absurdum}. Combining Theorem~\ref{prop1} and Theorem~\ref{main2}, we then have:

\begin{cor} Let	$\omz, \Omega_j$ be bounded pseudoconvex domains in $\C^n$. Suppose $\partial\Omega$ is $C^1$-smooth and satisfies Property $(P_{q-1})$, $2\le q\le n-1$ and $\partial\omz_j$ satisfies property $(P_q)$. If $d_H(\omz_j, \omz)\to 0$ as $j\to\infty$, then for any $k\in\nb$, 
	\begin{equation}
	\lim_{j\to\infty}\lambda^q_k(\Omega_j)= \lambda^q_k(\Omega).
	\end{equation}	
\end{cor}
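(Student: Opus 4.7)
The plan is to combine the upper semi-continuity statement with the lower semi-continuity statement just proved, using property $(P_q)$ on $\partial\Omega_j$ to furnish the spectral discreteness hypothesis required by the latter.

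First, since $\Omega$ is a bounded pseudoconvex domain with $C^1$-smooth boundary, Theorem~\ref{main1} (equivalently Theorem~\ref{prop5}) applies with $\Omega_1=\Omega$ and $\Omega_2=\Omega_j$: for every $\eps>0$ there exists $\delta>0$ such that $d_H(\Omega_j,\Omega)<\delta$ implies $\lambda^q_k(\Omega_j)\le \lambda^q_k(\Omega)+\eps$. Since $d_H(\Omega_j,\Omega)\to 0$, this yields
\[
\limsup_{j\to\infty}\lambda^q_k(\Omega_j)\le \lambda^q_k(\Omega).
\]

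Second, to apply Theorem~\ref{main2} with $\Omega_1=\Omega$ and $\Omega_2=\Omega_j$, I must check that the $\dbar$-Neumann Laplacian $\Box_{q,\Omega_j}$ has purely discrete spectrum on $(0,q)$-forms. This is precisely where the hypothesis that $\partial\Omega_j$ satisfies property $(P_q)$ is used: by the Catlin--Straube theorem (\cite{Catlin84,Straube97}), property $(P_q)$ on the boundary of a bounded pseudoconvex domain implies that the $\dbar$-Neumann operator on $(0,q)$-forms is compact, which is equivalent to discreteness of $\sigma(\Box_{q,\Omega_j})$. The remaining hypotheses of Theorem~\ref{main2}---pseudoconvexity of $\Omega$, $C^1$-smoothness of $\partial\Omega$, and property $(P_{q-1})$ on $\partial\Omega$---are assumed outright in the corollary. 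Hence Theorem~\ref{main2} delivers $\lambda^q_k(\Omega_j)\ge \lambda^q_k(\Omega)-\eps$ for all sufficiently large $j$, so
\[
\liminf_{j\to\infty}\lambda^q_k(\Omega_j)\ge \lambda^q_k(\Omega).
\]

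Combining the two estimates gives $\lim_{j\to\infty}\lambda^q_k(\Omega_j)=\lambda^q_k(\Omega)$, as required. There is no real obstacle; the only conceptual point worth emphasizing is the \emph{asymmetric} role of the two potential-theoretic conditions: property $(P_{q-1})$ is imposed on the target domain $\Omega$ to drive the lower semi-continuity proof, while property $(P_q)$ is imposed on the perturbing domains $\Omega_j$ solely to guarantee that the variational quantities $\lambda^q_k(\Omega_j)$ in Theorem~\ref{main2} are genuine eigenvalues of an operator with discrete spectrum, so that the min-max comparison can be invoked on both sides.
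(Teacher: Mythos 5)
Your proof is correct and follows exactly the route the paper takes: combine the upper semi-continuity result (Theorem~\ref{main1}/Theorem~\ref{prop5}) with the lower semi-continuity result (Theorem~\ref{main2}), with the Catlin--Straube theorem supplying the spectral-discreteness hypothesis from property $(P_q)$ on $\partial\Omega_j$. Your identification of the asymmetric roles of $(P_{q-1})$ on $\Omega$ versus $(P_q)$ on $\Omega_j$ is the right way to read the hypotheses, and no gap remains.
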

 
\begin{remark} Unlike Theorem~\ref{prop3}, lower semicontinuity property does not hold on level sets of a smooth bounded domain without additional assumption. For example, let $\Omega$ be the Diederich-Fornaess worm domain with winding greater than $\pi$. Since $\Omega$ does not have Stein neighborhood basis $($\cite{DiederichFornaess77b}$)$, 
we have
This follows from the fact that pseudoconvexity of a smooth bounded domain in $\C^2$ is characterized by positivity of one of the variational eigenvalues $\lambda^1_k(\Omega)$ $($see \cite{Fu10}$)$. 
\end{remark}

\section{Quantitative estimates  on finite type domains}\label{sec:finite-type}

We continue our study of spectral stability on smooth bounded pseudoconvex domains of finite type. Our aim is to establish quantitative estimates for the stability on such domains. 
Notions of finite type were introduced by Kohn \cite{Kohn72}, D'Angelo \cite{Dangelo82, Dangelo93}, and Catlin \cite{Catlin83, Catlin84b, Catlin87} in connection with subelliptic theory of the $\dbar$-Neumann Laplacian. A smooth bounded domain $\Omega$ in $\C^n$ is said to be of finite $D_q$-type if the order of contact of $\partial\Omega$ with any $q$-dimensional complex analytic variety is finite. (We refer the reader to \cite{Dangelo82, Dangelo93} for precise definitions.) 

A fundamental theorem of Catlin states that a smooth bounded pseudoconvex domain $\Omega$ in $\C^n$ is of finite $D_q$-type if and only if the $\dbar$-Neumann Laplacian satisfies the following subelliptic estimate
\begin{equation}\label{subelliptic}
\|f\|^2_{W^\alpha}\le C Q_\Omega(f, f), \quad \forall f\in\Dom(Q_{q, \Omega})
\end{equation}
for some constants $0<\alpha\le 1/2$ and $C>0$. The constant $\alpha$ is referred to as the order of subellipticity. A key step in Catlin's theory is the construction of plurisubharmonic functions with large complex Hessians. More precisely, if $\Omega$ is a smooth bounded pseudoconvex domain in $\C^n$ of finite type, then there exist constants $\alpha>0$, $\delta_0>0$, and $C>0$ such that for any $0<\delta<\delta_0$, there exists a smooth plurisubharmonic function $\lambda_\delta$ on $\overline{\Omega}$ with $|\lambda_\delta|\le 1$ and
\begin{equation}\label{catlin1}
H_q(\lambda_\delta)(f)\ge C |f|^2/\delta^{2\alpha}
\end{equation}
on $A_\delta=\{z\in\Omega \mid d(z)=\dist(z, \partial\Omega)<\delta\}$ (\cite[Theorem~9.2]{Catlin87}). Subelliptic estimate \eqref{subelliptic} is then a consequence of the existence of such plurisubharmonic functions. Straube \cite{Straube97} showed that this last step also holds on bounded pseudoconvex domains with Lipschitz boundaries: Let $\Omega$ be a bounded pseudoconvex domain in $\C^n$ with Lipschitz boundary. Suppose there exist a continuous plurisubharmonic function $\lambda$ on $\Omega$ and constants $\alpha>0$, $C>0$ such that
\begin{equation}\label{catlin2}
H_q(\lambda)(f)\ge C |f|^2/(d(z))^{2\alpha}
\end{equation}
on $\Omega$ as currents, then subelliptic estimate \eqref{subelliptic} holds. For abbreviation, a bounded pseudoconvex domain $\Omega$ is said to satisfy {\it property} ($P_q^\alpha$) if condition \eqref{catlin1} is satisfied. We have the following simple analogues of Lemma~\ref{lem18}.

\begin{lem}\label{lem13}
	Let $\omz$ be a bounded pseudoconvex domain in $\C^n$. Suppose $\Omega$ satisfies property $(P_q^\alpha)$. Then there exists a  constant $C$ such that for all sufficiently small $\delta>0$,
	\begin{align}\label{22}
	\|f\|^2_{A_\delta}\leq C\delta^{2\alpha} Q_\omz(f,f),\qquad\forall f\in \Dom(Q_{q, \omz}).
	\end{align}
	Furthermore,  if $f$ is an eigenform for $\Box_\Omega$ associated with eigenvalue $\lambda(\Omega)$, then 
	\begin{align}\label{a1}
	\|f\|^2_{A_\delta}\leq C\delta^{2\alpha}\lambda(\omz)\|f\|^2_\omz \quad\text{and}\quad \|\dbar f\|^2_{A_{\delta}}\leq C\delta^{2\alpha}\lambda^2(\omz)\|f\|_{\omz}^2.
	\end{align} 
	Moreover, if $\Omega$ satisfies property $(P_{q-1}^\alpha)$, then 
	\begin{align}\label{a2}
	\|\dbarstar f\|^2_{A_{\delta}}\leq C\delta^{2\alpha}\lambda^2(\omz)\|f\|_{\omz}^2.
    \end{align}
\end{lem}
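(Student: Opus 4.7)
The plan is to parallel the proof of Lemma~\ref{lem18}, replacing the qualitative property $(P_q)$ by the quantitative version $(P_q^\alpha)$ so as to track an explicit factor of $\delta^{2\alpha}$. The key input is the plurisubharmonic function $\lambda_\delta$ supplied by $(P_q^\alpha)$: for each sufficiently small $\delta>0$ one has $\lambda_\delta\in C^2(\ol{\omz})$ plurisubharmonic with $|\lambda_\delta|\le 1$ and $H_q(\lambda_\delta)(f)\ge C|f|^2/\delta^{2\alpha}$ on $A_\delta$. Beyond this, the argument is a direct appeal to Catlin's inequality (Lemma~\ref{P}) followed by the same bootstrapping used in Lemma~\ref{lem18}.

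To establish \eqref{22}, I would set $b=(\lambda_\delta-1)/2$, which is $C^2$ plurisubharmonic on $\ol\omz$ with $-1\le b\le 0$, and apply Lemma~\ref{P}. Plurisubharmonicity gives $H_q(b)(f)\ge 0$ on all of $\omz$, while on the boundary strip $A_\delta$ one has the sharper bound $H_q(b)(f)\ge (C/2)|f|^2/\delta^{2\alpha}$. Restricting the integral in \eqref{cat} to $A_\delta$ yields
\begin{equation*}
Q_\omz(f,f)\ge \frac{1}{e}\int_{A_\delta} H_q(b)(f)\,dV\ge \frac{C}{2e\,\delta^{2\alpha}}\|f\|^2_{A_\delta},
\end{equation*}
which rearranges to \eqref{22}.

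For the eigenform estimates I proceed as in Lemma~\ref{lem18}. If $\Box_\omz f=\lambda(\omz)f$, then $Q_\omz(f,f)=\lambda(\omz)\|f\|^2_\omz$, and \eqref{22} immediately gives the first half of \eqref{a1}. For the second, $\dbar f$ is itself an eigenform for $\Box_\omz$ on $(0,q+1)$-forms with the same eigenvalue $\lambda(\omz)$, exactly as verified in the proof of Lemma~\ref{lem18}. Since $(P_q^\alpha)$ trivially implies $(P_{q+1}^\alpha)$ (the extra non-negative eigenvalue of the complex Hessian only improves the bound on $(0,q+1)$-forms), applying \eqref{22} at level $q+1$ to $\dbar f$ yields
\begin{equation*}
\|\dbar f\|^2_{A_\delta}\le C\delta^{2\alpha}Q_\omz(\dbar f,\dbar f)=C\delta^{2\alpha}\lambda(\omz)\|\dbar f\|^2_\omz\le C\delta^{2\alpha}\lambda^2(\omz)\|f\|^2_\omz.
\end{equation*}
The estimate \eqref{a2} is obtained by the same argument applied to $\dbarstar f$ at level $q-1$ under the hypothesis $(P_{q-1}^\alpha)$. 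Since the heavy analytic lifting is already done by Catlin's construction of $\lambda_\delta$ and by Lemma~\ref{P}, I do not expect a genuine obstacle; the only step that deserves a careful word of justification is the implication $(P_q^\alpha)\Rightarrow (P_{q+1}^\alpha)$, which follows from the fact that the complex Hessian acting on $(0,q+1)$-forms dominates its action on the constituent $(0,q)$-forms by a non-negative amount.
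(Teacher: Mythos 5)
Your proof is correct and is exactly the argument the paper intends: the paper omits the proof, stating that Lemma~\ref{lem13} is a simple consequence of Lemma~\ref{P} following the same line of reasoning as Lemma~\ref{lem18}, which is precisely what you carry out (normalize Catlin's $\lambda_\delta$ to lie in $[-1,0]$, apply \eqref{cat}, restrict the Hessian integral to $A_\delta$ using plurisubharmonicity, then bootstrap via the fact that $\dbar f$ and $\dbarstar f$ are eigenforms at levels $q+1$ and $q-1$). Your side remark that $(P_q^\alpha)\Rightarrow(P_{q+1}^\alpha)$ is the right justification and matches the corresponding step $(P_q)\Rightarrow(P_{q+1})$ in the proof of Lemma~\ref{lem18}.
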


\begin{lem}\label{lem16}
	Let $\omz$ be a bounded pseudoconvex domain in $\C^n$. Suppose $\Omega$ satisfies property $(P_q^\alpha)$. Then there exists a constant $\delta>0$ such that for any pseudoconvex domain $\omz_j$ with $\delta_j=d_H(\omz, \omz_j)\le\delta$, we have
	\begin{align}\label{39}
	\|f_j\|^2_{A_{j\delta_j}}\leq C\delta_j^{2\alpha} Q_{\omz_j}(f_j,f_j),\qquad\forall f_j\in\Dom(Q_{\omz_j}),
	\end{align}
	where $A_{j\sigma}:=\{z\in\omz_j|\dist(z,\partial\omz_j)<\sigma\}$.
	Furthermore,  if $f_j\in \Dom(\Box_{\omz_j}) $ is an eigenform satisfies $\Box_{\omz_j} f_j=\lambda(\omz_j) f_j$, then 
	\begin{align}
	\|\dbar_j f_j\|^2_{A_{j\delta_j}}\leq C\delta_j^{2\alpha}\lambda^2(\omz_j)\|f_j\|_{\omz_j}^2.
	\end{align} 
	Moreover, if $\Omega$ satisfies property $(P_{q-1}^\alpha)$, then
	 \begin{equation}
	 \|\dbarstar_j f_j\|^2_{A_{j\delta_j}}\leq C\delta_j^{2\alpha}\lambda^2(\omz_j)\|f_j\|_{\omz_j}^2.
	 \end{equation}
\end{lem}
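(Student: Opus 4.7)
The plan is to mirror the proof of Lemma~\ref{lem18}, upgrading the soft Catlin-type inequality from Lemma~\ref{P} by substituting the quantitative property $(P_q^\alpha)$ for ordinary property $(P_q)$, so that the resulting estimate carries a factor of $\delta_j^{2\alpha}$ rather than an arbitrary $\eps^2$. The first key step is to transfer the plurisubharmonic functions produced by $(P_q^\alpha)$ on $\Omega$ to the nearby domain $\Omega_j$. By $(P_q^\alpha)$, for each sufficiently small $s>0$ there is a smooth plurisubharmonic $\lambda_s$ with $|\lambda_s|\le 1$ satisfying $H_q(\lambda_s)(f)\ge C|f|^2/s^{2\alpha}$ on $A_s$. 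Catlin's construction actually provides such $\lambda_s$ on a fixed neighborhood $U$ of $\overline\Omega$, with the Hessian bound valid on $\{z\in U\mid\dist(z,\partial\Omega)<s\}$. For $\delta_j=d_H(\Omega,\Omega_j)$ sufficiently small, $\overline{\Omega_j}\subset U$, and the triangle inequality yields $A_{j\delta_j}\subset\{z\in U\mid\dist(z,\partial\Omega)<2\delta_j\}$.

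Now choose $s=2\delta_j$ and restrict $\lambda_{2\delta_j}$ (after the harmless normalization $b=(\lambda_{2\delta_j}-1)/2$) to $\Omega_j$. Applying Lemma~\ref{P} on the pseudoconvex domain $\Omega_j$ with this $b$ gives
\begin{equation*}
Q_{\Omega_j}(f_j,f_j)\ge\frac{1}{e}\int_{\Omega_j} H_q(b)(f_j)\,dV\ge\frac{C'}{\delta_j^{2\alpha}}\|f_j\|^2_{A_{j\delta_j}},
\end{equation*}
which is \eqref{39}. For the eigenform estimates, if $\Box_{\Omega_j}f_j=\lambda(\Omega_j) f_j$ on $(0,q)$-forms, then exactly as in the proof of Lemma~\ref{lem18}, $\dbar_j f_j$ is an eigenform of $\Box_{\Omega_j}$ on $(0,q+1)$-forms with the same eigenvalue, and likewise $\dbarstar_j f_j$ on $(0,q-1)$-forms. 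Property $(P_q^\alpha)$ automatically implies $(P_{q+1}^\alpha)$ (adding a further non-negative Hessian eigenvalue only enlarges the relevant sum), so \eqref{39} applies to $\dbar_j f_j$:
\begin{equation*}
\|\dbar_j f_j\|^2_{A_{j\delta_j}}\le C\delta_j^{2\alpha}\,Q_{\Omega_j}(\dbar_j f_j,\dbar_j f_j)=C\delta_j^{2\alpha}\lambda(\Omega_j)\|\dbar_j f_j\|^2_{\Omega_j}\le C\delta_j^{2\alpha}\lambda^2(\Omega_j)\|f_j\|^2_{\Omega_j},
\end{equation*}
using $\|\dbar_j f_j\|^2\le Q_{\Omega_j}(f_j,f_j)=\lambda(\Omega_j)\|f_j\|^2$. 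The analogous bound for $\dbarstar_j f_j$ uses the assumption $(P_{q-1}^\alpha)$ in place of $(P_q^\alpha)$.

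The main technical point is the first step: arranging that a family of PSH functions witnessing $(P_q^\alpha)$ is defined on a \emph{single} neighborhood $U\supset\overline\Omega$, so that restricting to $\Omega_j\subset U$ is legitimate and the Hessian lower bound persists on all of $A_{j\delta_j}$ (including the part that may lie outside $\Omega$). For smooth bounded pseudoconvex domains of finite type this follows from Catlin's construction, whose ingredients (the defining function, the finite type stratification, and the associated weight functions) extend smoothly to a neighborhood of $\overline\Omega$; once this is in hand, every remaining step is a direct adaptation of Lemmas~\ref{lem13} and~\ref{lem18}.
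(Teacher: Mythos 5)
Your route is exactly the one the paper indicates (the paper in fact omits this proof, remarking only that it is a consequence of Lemma~\ref{P} along the lines of Lemma~\ref{lem18}), and the inner mechanics are right: taking the Catlin function at scale comparable to $\delta_j$, normalizing it into $[-1,0]$, the inclusion $A_{j\delta_j}\subset\{z \mid \dist(z,\partial\Omega)<2\delta_j\}$, the observation that plurisubharmonicity makes $(P_q^\alpha)$ imply $(P_{q+1}^\alpha)$, and the eigenform bootstrap $Q_{\Omega_j}(\dbar_j f_j,\dbar_j f_j)=\lambda(\Omega_j)\|\dbar_j f_j\|^2_{\Omega_j}\le\lambda^2(\Omega_j)\|f_j\|^2_{\Omega_j}$ all mirror the proof of Lemma~\ref{lem18} correctly.

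The one step that is not justified as written is the transfer of $\lambda_{2\delta_j}$ to $\Omega_j$. As the paper defines property $(P_q^\alpha)$ (condition \eqref{catlin1}), $\lambda_\delta$ is given only on $\overline\Omega$, with the Hessian bound only on $A_\delta\subset\Omega$; since $\Omega_j$ need not be contained in $\Omega$, applying Lemma~\ref{P} on $\Omega_j$ requires a bounded function plurisubharmonic on all of $\Omega_j$ whose Hessian is large also on the part of $A_{j\delta_j}$ lying outside $\overline\Omega$. You correctly identify this as the crux, but you dispose of it by asserting that Catlin's construction yields $\lambda_s$ on a fixed neighborhood $U\supset\overline\Omega$ with the Hessian bound on a two-sided collar. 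That is not the statement of Theorem~9.2 in \cite{Catlin87} (which gives $\lambda_\delta$ on $\overline\Omega$ with the bound on the inside collar), it is not what $(P_q^\alpha)$ as defined in the paper provides, and it invokes smooth finite type of $\Omega$, which is stronger than the lemma's hypothesis --- the lemma is applied in Theorem~\ref{thm6} with $\Omega$ merely $C^1$ and satisfying $(P_{q-1}^\alpha)$. (Note also that smooth extension of the ingredients is not the issue; what needs proof is plurisubharmonicity and the Hessian lower bound on the outside collar.) So either the definition of $(P_q^\alpha)$ must be read or strengthened so that the functions live, with their Hessian bound, on a two-sided collar of $\partial\Omega$, or the outside piece $\Omega_j\setminus\overline\Omega$, which has thickness $\lesssim\delta_j$, must be handled separately, e.g.\ by locally patching in bounded plurisubharmonic slab-type functions with Hessian $\gtrsim\delta_j^{-2}\ge\delta_j^{-2\alpha}$. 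In fairness, the paper leaves this same point implicit, and you deserve credit for spotting it; but as a self-contained argument your appeal to Catlin's construction is an unproven strengthening that does not cover the stated generality.
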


These two lemmas are simple consequence of Lemma \ref{P}, following the same line
of arguments as in Lemma~\ref{lem18}. We omit the proofs. The following lemma is a direct consequence of the interior ellipticity of the $\dbar\oplus\dbarstar$. 

\begin{lem}\label{Interior} Let
	$\omz$ be a bounded domain in $\C^n$. Let $\omz_{\delta}=\{z\in\omz\,|\,\dist(z,\partial\omz)>\delta\}$. Then there exists a constant $C>0$ such that 
	\begin{align}\label{10}
	\|f\|^2_{W^1(\omz_\delta)}\le C\big(Q_\Omega(f, f)+ \frac{1}{\delta^2}\|f\|^2_\Omega\big)
	\end{align}
	for all $f\in\Dom(Q_\Omega)$.
\end{lem}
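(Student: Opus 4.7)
The plan is to reduce to an interior ellipticity estimate via a cutoff. First, choose $\chi \in C^\infty_c(\Omega)$ with $\chi \equiv 1$ on $\Omega_\delta$, $\supp\chi \subset \Omega_{\delta/2}$, and $|\nabla\chi| \le C/\delta$. Then $\chi f$ has compact support in $\Omega$ for any $f \in \Dom(Q_\Omega)$, and extending by zero, $\chi f$ lives on all of $\C^n$ with $\dbar(\chi f), \vartheta(\chi f) \in L^2$.

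The key ingredient is the standard identity: for any $u \in C^\infty_{(0,q),c}(\C^n)$, integration by parts yields
\begin{equation*}
\sum\limits_{|J|=q,\,k}{\!}' \int_{\C^n} \Big|\frac{\partial u_J}{\partial \bar z_k}\Big|^2 \, dV = \|\dbar u\|^2 + \|\vartheta u\|^2,
\end{equation*}
together with $\|\partial u_J/\partial z_k\| = \|\partial u_J/\partial \bar z_k\|$ for each coefficient (again by integration by parts on compactly supported functions). Combining these gives $\|u\|^2_{W^1(\C^n)} \le C(\|u\|^2 + \|\dbar u\|^2 + \|\vartheta u\|^2)$. A Friedrichs mollification argument, using the standard fact that the commutator of the mollifier with the first-order operators $\dbar$ and $\vartheta$ is uniformly bounded in $L^2$, extends this estimate to any compactly supported $u \in L^2_{(0,q)}(\C^n)$ with $\dbar u, \vartheta u \in L^2$.

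Applying this to $u = \chi f$ and using the Leibniz-type formulas
\begin{equation*}
\dbar(\chi f) = \chi\, \dbar f + \dbar\chi \wedge f, \qquad \vartheta(\chi f) = \chi\, \vartheta f - (\dbar\chi)^{\#}\lrcorner f,
\end{equation*}
together with $|\nabla\chi| \le C/\delta$, we obtain
\begin{equation*}
\|\dbar(\chi f)\|^2 + \|\vartheta(\chi f)\|^2 \le C\big(Q_\Omega(f,f) + \tfrac{1}{\delta^2}\|f\|^2_\Omega\big).
\end{equation*}
Since $\chi \equiv 1$ on $\Omega_\delta$, we have $\|f\|_{W^1(\Omega_\delta)} \le \|\chi f\|_{W^1(\C^n)}$, and combining everything yields the desired inequality (absorbing the $\|f\|^2_\Omega$ term into $\delta^{-2}\|f\|^2_\Omega$ for $\delta$ bounded above).

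The only subtle point is the passage from smooth compactly supported forms to arbitrary elements of $\Dom(Q_\Omega)$ via mollification, but since $\chi f$ is already compactly supported in $\Omega$ and no boundary condition enters, standard Friedrichs arguments handle this cleanly; this is what is meant by the ``interior ellipticity of $\dbar \oplus \dbarstar$'' alluded to in the statement.
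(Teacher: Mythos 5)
Your proof is correct and follows essentially the same approach as the paper: cut off to the interior, reduce to a compactly supported form, and use the elementary Bochner--Kodaira identity for compactly supported $(0,q)$-forms in $\C^n$ (together with $\|\partial u_J/\partial z_k\| = \|\partial u_J/\partial\bar z_k\|$) to convert $\|\dbar u\|^2 + \|\vartheta u\|^2$ into a full $W^1$ bound. The only cosmetic difference is that the paper uses a Lipschitz cutoff $\eta = \chi(d(z)/\delta)$ built directly from the distance function and cites Straube's Corollary~2.13 for the inequality $\sum\|\partial(\eta f_J)/\partial\bar z_l\|^2 \le Q(\eta f,\eta f)$, whereas you use a smooth cutoff and the exact identity for compactly supported forms (for which, as you note, the Friedrichs mollification step is trivial since $\dbar$ and $\vartheta$ are constant-coefficient and commute with convolution).
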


\begin{proof} The lemma is also well know. We include a proof for the reader's convenience. Let $\chi(t)=0$ for $t<1/2$, $\chi(t)=2(t-1/2)$ for $t\in [1/2, 1]$, and $\chi(t)=1$ for $t>1$. Let $d(z)=\dist(z,\partial\omz)$ and $\eta(z)=\chi(d(z)/\delta)$. Note that since the distance function is uniformly Lipschitz with Lipschitz constant $1$,  we have $|\nabla\eta(z)|\le 2/\delta$ almost everywhere on $\Omega$. Therefore
\[
\sum_{l=1}^n\sumprime_{|J|=q} \big\|\partial (\eta f_J)/\partial\bar z_l\big\|^2
\le Q(\eta f, \eta f)\le C\big(Q_\Omega(f, f)+\frac{1}{\delta^2}\|f\|^2_\Omega\big).
\]
(See \cite[Corollary~2.13]{Straube10} for a proof of the first inequality.) The desired inequality then follows from integration by part on the left-hand side.
\end{proof}

We remark that the constant in \eqref{10} can be chosen to be independent of $\Omega$. We will use this fact in the proof of the next theorem.

\begin{thm}\label{thm6} 
Let $\omz$ be a bounded pseudoconvex domain with $C^1$-smooth boundary. Assume that $\Omega$ satisfies property $(P_{q-1}^\alpha)$. Let $\omz_j$ be a bounded pseudoconvex domain whose $\dbar$-Neumann Laplacian has discrete spectrum on $(0, q)$-forms. Let $k\in\nb$. Then there exist constants $\delta>0$ and $C>0$ such that 
	\begin{equation}\label{44}
	\left|\lambda_k(\omz_j)-\lambda_k(\omz)\right|\le Ck\delta_j^{\alpha/(\alpha+1)} (\lambda_k(\omz)+1)^2,
	\end{equation}
provided $\delta_j=d_H(\omz,\omz_j)<\delta$.
\end{thm}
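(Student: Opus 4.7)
My plan is to execute quantitative versions of the proofs of Theorems~\ref{prop5} and \ref{cor3}, extracting an explicit rate by balancing interior elliptic regularity against the quantitative boundary-strip control supplied by property $(P_{q-1}^\alpha)$. Theorem~\ref{prop5} applied with $\eps=1$ already gives $\lambda_k(\omz_j)\le\lambda_k(\omz)+1$ once $\delta_j$ is small, so $\lambda_k(\omz_j)$ and $\Lambda:=\lambda_k(\omz)+1$ are comparable. To obtain both directions of the estimate, I reuse the push-out and push-in transition operators from the earlier sections: $T^+_\delta\colon\Dom(Q_\omz)\to\Dom(Q_{\omz_j})$ as in the proof of Theorem~\ref{prop5} (giving $\lambda_k(\omz_j)\le\lambda_k(\omz)+\cdots$), and $T^-_{j\delta}\colon\Dom(Q_{\omz_j})\to\Dom(Q_\omz)$ as in the proof of Theorem~\ref{cor3} (giving the reverse). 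Applied to an orthonormal family $\{f_l\}_{l=1}^k$ of eigenforms, feeding Lemma~\ref{first} reduces the problem to bounding translations of the form
\begin{equation*}
\|\tf_l(\cdot\mp 2\delta\vv n)-\tf_l\|_{\C^n},\quad \|\widetilde{\dbar f_l}(\cdot\mp 2\delta\vv n)-\widetilde{\dbar f_l}\|_{\C^n},\quad \|\widetilde{\dbarstar f_l}(\cdot\mp 2\delta\vv n)-\widetilde{\dbarstar f_l}\|_{\C^n}
\end{equation*}
by an explicit power of $\delta_j$ times a polynomial in $\Lambda$.

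\textbf{Quantitative translation estimate.} This is the technical heart. For a strip width $\sigma>0$ to be chosen, I split each translation into an interior part supported where $\dist(z,\partial\omz)>\sigma$ and a boundary part supported in $A_{\sigma+2\delta}$. The interior part is bounded by $C\delta\|\nabla f\|_{\omz_{\sigma/2}}$; by Lemma~\ref{Interior} (whose constant is independent of the underlying domain), this is at most $C\delta(Q(f,f)+\sigma^{-2}\|f\|^2)^{1/2}\le C\delta\sigma^{-1}\Lambda^{1/2}\|f\|$. The boundary part is bounded by $2\|f\|_{A_{\sigma+2\delta}}$; by Lemma~\ref{lem16} (using that $(P_{q-1}^\alpha)$ on $\omz$ also implies $(P_q^\alpha)$), this is at most $C\sigma^\alpha Q(f,f)^{1/2}\le C\sigma^\alpha\Lambda^{1/2}\|f\|$, with constants uniform in $j$ for $\delta_j$ small. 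Choosing $\sigma=\delta_j^{1/(\alpha+1)}$ balances the two contributions and yields
\begin{equation*}
\|\tf(\cdot\mp 2\delta_j\vv n)-\tf\|_\omz\le C\delta_j^{\alpha/(\alpha+1)}\Lambda^{1/2}\|f\|.
\end{equation*}
Applying the same argument to $\dbar f$ and $\dbarstar f$ (which are eigenforms of $\Box$ associated with eigenvalue $\lambda_k$) and invoking the sharper boundary-strip estimates $\|\dbar f\|^2_{A_\sigma}+\|\dbarstar f\|^2_{A_\sigma}\le C\sigma^{2\alpha}\lambda_k^2\|f\|^2$ from Lemma~\ref{lem16} produces the analogous inequality with an extra factor of $\Lambda^{1/2}$.

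\textbf{Assembly and main obstacle.} Combining these translation estimates as in the proofs of Theorems~\ref{prop5} and \ref{cor3}, one arrives at
\begin{equation*}
|\langle Tf_h,Tf_l\rangle-\delta_{hl}|\le C\delta_j^{\alpha/(\alpha+1)}\Lambda^{1/2},\quad |Q(Tf_h,Tf_l)-Q(f_h,f_l)|\le C\delta_j^{\alpha/(\alpha+1)}\Lambda^{3/2},
\end{equation*}
so that Lemma~\ref{first}, applied first with $T=T^+_\delta$ on eigenforms of $\Box_\omz$ and then with $T=T^-_{j\delta}$ on eigenforms of $\Box_{\omz_j}$, delivers the asserted bound $|\lambda_k(\omz_j)-\lambda_k(\omz)|\le Ck\delta_j^{\alpha/(\alpha+1)}\Lambda^2$. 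The chief obstacle is the second direction: the eigenforms $f_{jl}$ now live on the variable domain $\omz_j$, so one needs uniform-in-$j$ interior Sobolev control and uniform-in-$j$ boundary-strip control. The first is available because the constant in Lemma~\ref{Interior} is domain-independent (as noted after its proof), and the second because Lemma~\ref{lem16} produces estimates on $\omz_j$ from $(P^\alpha_{q-1})$ on $\omz$ with constants depending only on the $(P^\alpha_{q-1})$ data of $\omz$, valid for all $\omz_j$ with $\delta_j<\delta$.
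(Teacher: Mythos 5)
Your proposal follows essentially the same route as the paper's proof: the same push-out/push-in transition operators regularized via the Neumann operators (as in Theorems~\ref{prop5} and~\ref{cor3}), boundary-strip estimates from Lemmas~\ref{lem13} and~\ref{lem16}, the interior gradient bound of Lemma~\ref{Interior} applied to $f$, $\dbar f$, $\dbarstar f$ (using the eigenform structure), the balancing choice of strip width $\delta_j^{1/(\alpha+1)}$ giving the exponent $\alpha/(\alpha+1)$, the a priori bound $\lambda_k(\Omega_j)\le\lambda_k(\Omega)+1$, and the final appeal to Lemma~\ref{first}. The only difference is presentational: the paper writes out the direction $\lambda_k(\Omega)-\lambda_k(\Omega_j)\le Ck\delta_j^{\alpha/(\alpha+1)}$ using eigenforms of $\Box_{\Omega_j}$ and leaves the reverse direction (which you treat symmetrically with eigenforms of $\Box_{\Omega}$) to the reader.
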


\begin{proof}	The proof follows the same line of arguments as those for Theorem~\ref{cor3}. The difference here is that we use Lemma~\ref{lem13} and Lemma~\ref{lem16} to estimate terms near the boundary and use Lemma~\ref{Interior} to estimate terms inside the domain. 

We provide the proof of the inequality 
 $$
 \lambda_k(\Omega)-\lambda_k(\Omega_j)\le Ck\delta_j^{\alpha/(\alpha+1)}.
 $$
Following the same setup as in the proof of Theorem~\ref{cor3}, for $f_j\in\Dom(Q_{\Omega_j})$, we set
\begin{equation}\label{1-est}
\hf_{j}(z)=\psi^0(z)f_j(z)+\sum_{l=1}^{m}\psi^l(z)f_j(z-2\delta_j\vv{n}^l)	
\end{equation}
and
\begin{equation}
\cf_{j}(z)=\psi^0(z)\tf_j(z)+\sum_{l=1}^{m}\psi^l(z)\tf_j(z+2\delta_j\vv{n}^l).	
\end{equation} 
Define
$T_{j}\colon \Dom(Q_{\Omega_j})\to \Dom(Q_\Omega)$  by
\begin{align}
T_{j} f_j=\dbarstar N\dbar \widehat f_{j}+\dbar N\dbarstar\cf_{j}.
\end{align}

We now assume that $f_j(z)$ is the normalized eigenform of $\Box_{\Omega_j}$ associated with the eigenvalue $\lambda(\Omega_j)$. As in the proofs of Theorems~\ref{prop5} and~\ref{cor3}, it suffices to estimate the terms
\begin{equation}\label{term1}
\|f_j\|_{\omz_j\setminus\omz}, \quad \|\dbar f_j\|_{\omz_j\setminus\omz}, \quad \|\dbarstar f_j\|_{\omz_j\setminus\omz}, \quad \|\hf_j-\tf_j\|_{\omz},\quad  \|\cf_j-\tf_j\|_{\omz},
\end{equation}
and
\begin{equation}\label{term2}
\|\dbar \hf_j-\widetilde{\dbar f_j}\|_{\omz}, \quad \|\dbarstar\cf_j-\widetilde{\dbarstar f_j}\|_{\omz}.
\end{equation}
From \eqref{39} in Lemma~\ref{lem16}, we have  
$$
\|f_j\|_{\omz_j\setminus\omz}\le \|f_j\|_{A_{j\delta_j}}\le C\delta_j^{\alpha}(\lambda(\omz_j))^{1/2}.
$$
Similarly,
$$
\|\dbar f_j\|_{\omz_j\setminus\omz}+\|\dbarstar f_j\|_{\omz_j\setminus\omz}\le \|\dbar f_j\|_{A_{j\delta_j}}+\|\dbarstar f_j\|_{A_{j\delta_j}}\le C\delta_j^{\alpha}\lambda(\omz_j).
$$
Noticing that from Lemma~\ref{Interior}, we have
\[
\|\nabla f_j \|_{\Omega_{j, \delta_j^\beta}}\le C\delta_j^{-\beta}(\lambda(\Omega_j)+1)^{1/2},
\]
where $\Omega_{j, \delta_j^\beta}=\{z\in\Omega_j  \mid  \dist(z, \partial\Omega_j)>\delta_j^\beta\}$. Taking $\beta=1/(1+\alpha)$, we then have
     \begin{equation}
	\begin{aligned}
	\|\hf_j -\tf_j\|_{\omz}&\le C\sum_{l=1}^{m} \left\|f_j(z-2\delta_j\vv{n}^l)-\tf_j(z)\right\|_{\omz\cap U^l}\\
	&\le C\sum_{l=1}^m\left\|f_j(z-2\delta_j\vv{n}^l)-f_j(z)\right\|_{(\omz\cap U^l)\setminus A_{j, \delta_j^{\beta}}}+C\|f_j(z)\|_{A_{j, 2\delta_j^{\beta}}}\\
	&\le C\delta_j\left\|\nabla f_j\right\|_{\Omega_{j, \frac{1}{2}\delta_j^{\beta}}}+C\delta_j^{\alpha/(\alpha+1)}(\lambda(\omz_j))^{1/2} \\
	&\le C\delta_j^{\alpha/(\alpha+1)}(\lambda(\omz_j)+1)^{1/2}.
	\end{aligned}
	\end{equation}
Noticing that in obtaining the last inequality, we have used the facts that 
$$
\Omega\setminus A_{j, \delta^\beta_j}\subset \Omega_{j, \frac{1}{2}\delta^\beta_j}
$$ 
and $\lambda(\Omega_j)$ is controlled from above by the corresponding eigenvalue $\lambda(\Omega)$ of $\Box_\Omega$ as shown in Corollary~\ref{cor2}. Similar estimates also hold
for the other three terms in \eqref{term1} and \eqref{term2}. Notice that plugging $\dbar f_j$ and $\dbarstar f_j$ into \eqref{10}, we have
\[
\|\nabla \, \dbar f_j\|_{\Omega_{j, \delta}} \le C \delta^{-1}(\lambda(\Omega_j)+1) \quad \text{and}\quad \|\nabla \, \dbarstar f_j\|_{\Omega_{j, \delta}} \le C \delta^{-1}(\lambda(\Omega_j)+1)
\]
and the constants in the above estimates are independent of $j$. Using Lemma \ref{first}, we then obtain inequality \eqref{1-est}. The proof of the other inequality in Theorem~\ref{thm6} is similar and is left to the interested reader. 
\end{proof}

Quantitative estimate \eqref{44} can be sharpened when more restriction is placed on the boundaries of $\Omega$ and $\Omega_j$. A family of smoothly bounded pseudoconvex domains $\Omega_j$ in $\C^n$ with defining functions $\rho_j$ is said to be of {\it uniform finite $D_q$-type} if there exist positive constants $\alpha$ and $C$ such that inequality \eqref{catlin1} holds for all $\Omega_j$ and the $C^\infty$-norm of  $\rho_j$ is uniformly bounded. The following lemma is a direct consequence of Catlin's subelliptic estimates (\cite{Catlin87}).  

\begin{lem}\label{lem15} Let $\omz$ be a smooth bounded pseudoconvex domain of finite type in $\C^n$. Let $l$ be a non-negative integer. Let $f$ be an eigenform of the $\dbar$-Neumann Laplacian $\Box_{q, \omz}$ with associated eigenvalue $\lambda(\Omega)$. Then there exist positive constants $\alpha$ and $C_l$ such that 
	\begin{align}\label{28}
	\|f\|_{C^l(\overline{\Omega})}\le C_l (\lambda(\omz))^{[\frac{n+l}{2\alpha}]+1} \|f\|,
	\end{align}
	where $[(n+l)/2\alpha]$ denotes the integer part of $(n+l)/2\alpha$.
\end{lem}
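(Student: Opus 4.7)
The plan is to combine Catlin's subelliptic estimates with the Sobolev embedding $W^s(\Omega)\hookrightarrow C^l(\overline\Omega)$, exploiting the identity $\Box^m f=\lambda^m f$ for every $m\in\nb$. Since $\Omega$ is a smooth bounded pseudoconvex domain of finite type, Catlin's theorem (\cite{Catlin87}) yields a constant $\alpha>0$ such that the basic subelliptic estimate \eqref{subelliptic} holds. By the standard tangential-smoothing argument, this estimate propagates to the higher-order form
\[
\|g\|_{W^{s+2\alpha}}\le C_s\bigl(\|\Box g\|_{W^s}+\|g\|_{\Omega}\bigr),\qquad s\ge 0,
\]
valid for smooth $(0,q)$-forms $g$ in $\Dom(\Box_q)$. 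In particular, eigenforms of $\Box_q$ are $C^\infty(\overline\Omega)$, so iterating this estimate against $\Box^j f$ presents no regularity obstruction.

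Given the eigenform $f$ with $\Box f=\lambda f$, apply the higher-order estimate iteratively. With $s=0$ one obtains $\|f\|_{W^{2\alpha}}\le C_0(\lambda+1)\|f\|$. With $s=2\alpha$, using $\Box f=\lambda f$ and $\|\Box f\|_{W^{2\alpha}}=\lambda\|f\|_{W^{2\alpha}}$, one obtains $\|f\|_{W^{4\alpha}}\le C_1(\lambda+1)^2\|f\|$. Continuing inductively,
\[
\|f\|_{W^{2m\alpha}}\le C_m(\lambda+1)^m\|f\|_{\Omega}
\]
for every $m\in\nb$.

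Finally, since $\Omega\subset\R^{2n}$ is smooth, the Sobolev embedding $W^s(\Omega)\hookrightarrow C^l(\overline\Omega)$ holds whenever $s>l+n$. Taking $m=[(n+l)/(2\alpha)]+1$ ensures $2m\alpha>n+l$, and hence
\[
\|f\|_{C^l(\overline\Omega)}\le C\|f\|_{W^{2m\alpha}}\le C_l(\lambda+1)^{[(n+l)/(2\alpha)]+1}\|f\|.
\]
Since on a smooth bounded pseudoconvex domain of finite type the $\dbar$-Neumann Laplacian on $(0,q)$-forms with $1\le q\le n-1$ has trivial kernel, its lowest eigenvalue is bounded below by a positive constant, so the additive $1$ can be absorbed into $C_l$ to yield \eqref{28}. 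The one non-routine point is the inductive propagation of the subelliptic estimate to arbitrary Sobolev orders; this is classical in the subelliptic theory of the $\dbar$-Neumann Laplacian (going back to \cite{KohnNirenberg65}), but its use here hinges on the smoothness of eigenforms, itself a consequence of Catlin's subellipticity.
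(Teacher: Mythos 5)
Your proof is correct and follows essentially the same route as the paper: iterate Catlin's subelliptic estimate to bootstrap the Sobolev regularity of the eigenform, then apply the Sobolev embedding $W^s(\Omega)\hookrightarrow C^l(\overline\Omega)$. The only cosmetic difference is that the paper states the gain directly in terms of $N_\Omega$ (so that $f=\lambda N f$ yields $\lambda^m$ with no additive $1$), whereas you state it in terms of $\Box$ and then correctly absorb the additive $1$ using H\"ormander's lower bound on the first eigenvalue.
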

\begin{proof}  It follows from above-mentioned work of Catlin that $\Omega$ satisfies property $(P_q^\alpha)$ for some $\alpha\in (0, \ 1/2]$ and there exists a constant $C_s>0$ such that
\begin{equation}\label{cat-est}
\|N_\omz f\|_{W^{s+2\alpha}(\omz)}\le C_s\|f\|_{W^s(\omz)}.
\end{equation}
Starting with $s=0$ and repeatedly applying \eqref{cat-est} to $\Box_\Omega f=\lambda(\Omega) f$, we then have
\begin{equation}\label{28b}
	\|f\|_{W^{2m\alpha}(\omz)}\le C(\lambda(\omz))^{m}\|f\|,\quad m\in\N.
\end{equation}
	The desired estimates \eqref{28} is then an immediate consequence of Sobolev embedding theorem.
\end{proof}

We remark that the constant in \eqref{28} depends only on the constant in \eqref{catlin1} and the $C^\infty$-norm of the defining function of $\Omega$. We will use this fact in proving the  following theorem:

\begin{thm}\label{prop10} 
	Let $\omz$ be a smooth bounded pseudoconvex domain of finite $D_q$-type in $\C^n$. Let $\Omega_j$ be a family of bounded pseudoconvex domains. Let $1\le q\le n-1$ and let $k\in\nb$. 
	Then there exist 
	constants $C_k>0$ and $\delta>0$ such that 
	\begin{equation}\label{ft-1}
		\lambda^q_k(\omz_j)-\lambda^q_k(\omz)\le C_k\delta^{1/2}_j,
	\end{equation}
	provided $\delta_j=d_H(\omz,\omz_j)<\delta$. Furthermore, if $\Omega_j$ is a family of smooth bounded pseudoconvex domains of uniform finite $D_q$-type, then 
	\begin{equation}\label{ft-2}
		-C_k\delta_j\le\lambda^q_k(\omz_j)-\lambda^q_k(\omz)\le C_k \delta_j.
	\end{equation}
\end{thm}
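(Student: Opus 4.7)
The plan is to refine the proof of Theorem~\ref{thm6} by exploiting the \emph{global} $C^\infty$-regularity of the $\Box_{q,\omz}$-eigenforms guaranteed by Lemma~\ref{lem15}, in place of the merely interior $W^1$-bound of Lemma~\ref{Interior} that produced the suboptimal $\delta_j^{\alpha/(1+\alpha)}$ rate there. Because smoothness is controlled globally up to $\partial\omz$, the interior-vs.-boundary balancing in Theorem~\ref{thm6} can be dropped, and the two geometric factors $\delta_j^{1/2}$ and $\delta_j$ can be isolated cleanly.

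For \eqref{ft-1}, I would start from an orthonormal basis $\{f_1,\ldots,f_k\}$ of the first $k$ eigenspaces of $\Box_{q,\omz}$; by Lemma~\ref{lem15} each $f_i\in C^\infty(\overline{\omz})$ with $\|f_i\|_{C^1(\overline\omz)}\le M_k$, where $M_k$ depends only on $\lambda_k(\omz)$, the subellipticity exponent $\alpha$, and the $C^\infty$-norm of a defining function of $\omz$. Smoothly extend each $f_i$ to $\tf_i\in C^\infty_c(\C^n)$ with $\|\tf_i\|_{C^1(\C^n)}\le CM_k$, e.g.\ by Seeley's extension. Following the construction in Theorem~\ref{cor3}, I would then form the push-out $\hf_j$ and push-in $\cf_j$ of $\tf$ against a tubular-neighborhood partition of unity on $\overline\omz$ and define
\[
T_j f = \dbarstar_{\omz_j} N_{\omz_j}\dbar_{\omz_j}\hf_j + \dbar_{\omz_j} N_{\omz_j}\vartheta\cf_j\in\Dom(Q_{\omz_j}).
\]
Two decisive inputs then feed into Lemma~\ref{first}: (a) the boundary-strip bound $\|\tf\|_{L^2(\omz\triangle\omz_j)}+\|\dbar\tf\|_{L^2(\omz\triangle\omz_j)}+\|\vartheta\tf\|_{L^2(\omz\triangle\omz_j)}\le M_k|\omz\triangle\omz_j|^{1/2}\lesssim\delta_j^{1/2}$, from the pointwise $C^1$-bound together with $|\omz\triangle\omz_j|\lesssim\delta_j$; and (b) the translation bound $\|\hf_j-\tf\|_{L^2(\omz_j)}+\|\dbar\hf_j-\widetilde{\dbar f}\|_{L^2(\omz_j)}\lesssim\delta_j$, obtained from the identity $\hf_j(z)-\tf(z)=\sum_l\psi^l(z)(\tf(z-2\delta_j\vv{n}^l)-\tf(z))$ and $|\tf(z-2\delta_j\vv{n}^l)-\tf(z)|\le 2\delta_j\|\nabla\tf\|_\infty$, with the corresponding estimates for $\cf_j$. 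Substituting (a) and (b) into the Cauchy--Schwarz pattern of Theorem~\ref{prop5} with $\alpha_k,\beta_k\lesssim\delta_j^{1/2}$ then yields \eqref{ft-1}.

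For \eqref{ft-2}, uniform finite $D_q$-type of the $\omz_j$'s makes Lemma~\ref{lem15} applicable with a \emph{$j$-independent} constant $M_k$, so running the same construction in reverse---transferring $\Box_{\omz_j}$-eigenforms into $\Dom(Q_\omz)$---gives the matching lower bound. To upgrade both rates from $\delta_j^{1/2}$ to $\delta_j$, I would sharpen step (a) by decomposing $f = f^\tau+f_N\wedge\dbar\rho$ near $\partial\omz$; because $f\in\Dom(\dbarstar)$ we have $f_N=0$ on $\partial\omz$, and smoothness then forces $|f_N(z)|\lesssim d(z)$, making the normal contribution to the strip integral $O(\delta_j^{3/2})$. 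For the tangential part, which the naive pointwise bound only controls by $O(\delta_j^{1/2})$, I would invoke the sharp Brezis--Marcus Hardy inequality together with the Davies cutoff trick cited in the introduction, absorbing $\|f^\tau\|^2_{L^2(\text{strip}_{\delta_j})}$ into $\delta_j\,Q_\omz(f,f)$ rather than estimating it pointwise. This recovers the missing factor $\delta_j^{1/2}$ and gives $\alpha_k,\beta_k\lesssim\delta_j$, so that Lemma~\ref{first} yields \eqref{ft-2}.

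The main obstacle will be precisely this tangential step: since $f^\tau$ does not vanish on $\partial\omz$, no pointwise improvement is available, and the classical Hardy inequality does not apply directly. Making the Brezis--Marcus--Davies argument work inside the subelliptic framework for $\Box_q$---where the Dirichlet condition is replaced by the mixed $(\dbar\rho)^*\lrcorner u=0$ condition and the operator is only subelliptic of order $\alpha\le 1/2$---is the technical heart of the theorem. One must additionally verify that all implicit constants in (a) and (b) depend only on $\lambda_k(\omz)$ and on the uniform finite-type data, so that the $C_k$ in \eqref{ft-1}--\eqref{ft-2} is genuinely independent of $j$.
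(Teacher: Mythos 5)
Your overall strategy---exploit the global $C^\infty(\overline{\Omega})$ control on eigenforms from Lemma~\ref{lem15}, feed strip and translation estimates into Lemma~\ref{first}, and use the Brezis--Marcus/Davies bound for the normal component---is in the right spirit, but as written there are two genuine gaps. The first is structural: if you push in the Seeley extension $\tf$, the resulting form $\cf_j$ does not satisfy the boundary condition $(\dbar\rho_j)^*\lrcorner\,\cf_j=0$ on $\partial\omz_j$, so $\cf_j\notin\Dom(\dbarstar_{\omz_j})$, and the identities you are implicitly importing from Theorem~\ref{cor3} (namely $\dbarstar_{\omz_j}T_jf=\va\cf_j$ and $T_jf-\tf=\dbarstar\dbar N(\hf_j-\tf)+\dbar\dbarstar N(\cf_j-\tf)$, which rest on the commutation $\dbar N\dbarstar\cf=\dbar\dbarstar N\cf$ valid only for $\cf\in\Dom(\dbarstar)$) break down. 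The zero-extension push-in of Theorem~\ref{cor3} restores membership in $\Dom(\dbarstar)$, but then the translation difference is $O(1)$ pointwise on a strip of width $O(\delta_j)$ (the tangential trace of $f$ does not vanish), so you only recover $\delta_j^{1/2}$. The paper resolves this with a different transfer operator: $T_jf=\dbarstar_j\dbar_jN_j\ec f+\dbar_j\dbarstar_jN_jf_{\delta_j}$, where $f_{\delta_j}=\chi(d(z)/\delta_j)\tf$ is a cutoff by a function of the distance, hence compactly supported in $\Omega_j$ and in $\Dom(\dbarstar_{\omz_j})$; crucially, the cutoff error in $\va f_{\delta_j}$ is $\delta_j^{-1}\chi' f_N$, i.e.\ it hits only the normal component, to which Lemma~\ref{Hardy2} ($\|f_N\|_{A_\delta}\lesssim\delta^{3/2}$) applies.

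The second gap is your mechanism for upgrading to the rate $\delta_j$. The absorption $\|f^\tau\|^2_{A_{\delta_j}}\lesssim\delta_j\,Q_\omz(f,f)$ is no improvement: the squared tangential strip integral is already $O(\delta_j)$ from the sup-norm bound, while the genuinely stronger statement $\|f^\tau\|_{A_{\delta_j}}=O(\delta_j)$ in norm is false in general because $f^\tau$ has nonzero boundary trace---as you yourself observe, no Hardy inequality is available there. The true obstruction at rate $\delta_j$ is not the quadratic strip terms but the terms linear in the strip-supported difference, such as $\langle f_{\delta_j}-\ec f,\ \dbarstar_j\dbar_jN_j\ec f\rangle_{\Omega_j}$ in the norm comparison; without information on $\Omega_j$ the second factor can only be bounded by its global $L^2$ norm, giving $\delta_j^{1/2}$. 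The paper removes this loss by using the uniform finite $D_q$-type of the $\Omega_j$: Catlin's subelliptic estimates give a $j$-independent $C^2$ bound on $N_j\ec f$, so $\|\dbarstar_j\dbar_jN_j\ec f\|_{A_{j,3\delta_j}}=O(\delta_j^{1/2})$ and the cross term is $O(\delta_j)$ (this is \eqref{ec5}); meanwhile the quadratic-form comparison is made purely quadratic over the strip by arranging $\dbar_jT_jf=\dbar\ec f$ exactly and computing $\|\dbarstar_jf_{\delta_j}\|^2$ directly with Lemma~\ref{Hardy2}. In particular, the uniform finite-type hypothesis on $\Omega_j$ is needed for the improved \emph{upper} bound in \eqref{ft-2}, not only for the reverse inequality; your proposal invokes it only for the lower bound and substitutes, in the upper bound, a tangential Hardy step that is not available.
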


We will prove this theorem using the following sharp Hardy's inequality due to Brezis and Marcus (for functions)~\cite{BrezisMarcus97} and an idea from Davies~\cite{Davies00}. 

\begin{lem}\label{Hardy1} Let $\omz$ be a bounded domain in $\C^n$ with $C^2$-boundary. Then there exists a constant $A>0$ such that
 	\begin{equation}\label{8}
 	\int_{\omz}\frac{|f_N|^2}{(d(z))^2}dV\leq 16\left(Q_{\Omega}(f_N,f_N)+A\|f_N\|^2\right)
    \end{equation} 
    for any $f\in \Dom(Q_{\Omega})$, where $f_N=(\dbar d(z))^*\lrcorner f$ is the normal component of $f$ and $d(z)=d(z, \partial\Omega)$ is the Euclidean distance from $z$ to the boundary $\partial\omz$. Furthermore, if $\Omega$ is pseudoconvex, then for any $\eps>0$, there exists a constant $C_\eps>0$ such that
 	\begin{equation}\label{8a}
 	Q_{\Omega}(f_N, f_N)\le (q/4+ \eps)Q_{\Omega}(f, f)+C_\eps \|f\|^2.
 	\end{equation}	
 \end{lem}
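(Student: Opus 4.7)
The plan is to reduce both inequalities to two classical ingredients: the sharp Brezis--Marcus Hardy inequality \cite{BrezisMarcus97} for scalar functions with Dirichlet data, and the Morrey--Kohn--H\"ormander identity for $(0,r)$-forms. The pivotal observation I would exploit is that $f\in\Dom(Q_\Omega)\subset\Dom(\dbarstar_{q-1})$ forces $(\dbar\rho)^*\lrcorner f=0$ on $\partial\Omega$, which, because the distance function $d$ agrees with $-\rho$ in a neighborhood of $\partial\Omega$, means precisely that every coefficient of $f_N$ vanishes on $\partial\Omega$. Thus $f_N$ will carry Dirichlet data and in particular belong to $\Dom(\dbarstar_{q-2})$ automatically. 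By the standard density \cite[Lemma~4.3.2]{ChenShaw99} already used earlier in the paper, I may throughout assume $f\in C^1_{(0,q)}(\overline{\Omega})\cap\Dom(\dbarstar)$.

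For \eqref{8} my plan is to apply Brezis--Marcus to each coefficient $(f_N)_K$, giving $\int_\Omega |(f_N)_K|^2/d^2\le 4\int|\nabla(f_N)_K|^2+A\|(f_N)_K\|^2$. Since each $(f_N)_K$ vanishes on $\partial\Omega$, integration by parts yields $\int|\partial u/\partial z_j|^2=\int|\partial u/\partial\bar z_j|^2$, hence $\int|\nabla u|^2=4\sum_j\int|\partial u/\partial\bar z_j|^2$. Summing over $K$ controls $\sum_K\int|\nabla(f_N)_K|^2$ by $4\sum_{j,K}\|\partial(f_N)_K/\partial\bar z_j\|^2$. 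Finally, the Morrey--Kohn--H\"ormander identity applied to the $(0,q-1)$-form $f_N$ has a boundary integral that vanishes identically (every coefficient is zero on $\partial\Omega$, so \emph{no} pseudoconvexity is needed here), giving $\sum_{j,K}\|\partial(f_N)_K/\partial\bar z_j\|^2=Q_\Omega(f_N,f_N)$. Chaining these three steps produces the constant $16=4\cdot 4$.

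For \eqref{8a}, I would differentiate $(f_N)_K=\sum_k(\partial d/\partial z_k)f_{kK}$ to split
\begin{equation*}
\frac{\partial(f_N)_K}{\partial\bar z_j}=\sum_k\frac{\partial d}{\partial z_k}\frac{\partial f_{kK}}{\partial\bar z_j}+\sum_k\frac{\partial^2 d}{\partial\bar z_j\partial z_k}\,f_{kK},
\end{equation*}
and then apply $|a+b|^2\le(1+\eta)|a|^2+C_\eta|b|^2$, Cauchy--Schwarz on the first sum, and the pointwise identity $|\dbar d|^2=1/4$ (from $|\nabla d|=1$ together with $d$ real). Summing over $j,K$ and using the anti-symmetric combinatorial identity $\sumprime_{|K|=q-1}\sum_k|f_{kK}|^2=q\sumprime_{|J|=q}|f_J|^2$ would give
\begin{equation*}
Q_\Omega(f_N,f_N)=\sum_{j,K}\|\partial(f_N)_K/\partial\bar z_j\|^2\le\frac{q(1+\eta)}{4}\sumprime_J\sum_j\|\partial f_J/\partial\bar z_j\|^2+C_\eta\|f\|^2.
\end{equation*}
Pseudoconvexity enters at the final step: the Morrey--Kohn--H\"ormander identity applied to $f$ itself carries a boundary term that is a Levi-form integral, non-negative on $\Dom(\dbarstar_{q-1})$, so $\sumprime_J\sum_j\|\partial f_J/\partial\bar z_j\|^2\le Q_\Omega(f,f)$. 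Picking $\eta$ with $q\eta/4<\eps$ completes \eqref{8a}.

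I expect the main technical hurdle to be the fact that $d(z)=\dist(z,\partial\Omega)$ is only $C^2$ in a tubular neighborhood of $\partial\Omega$; globally it is merely Lipschitz and its Hessian (which I invoke in the differentiation above) is undefined on the cut locus. My planned remedy is to replace $d$ by a smooth surrogate $\wt d$ that coincides with $d$ on a collar $V\supset\partial\Omega$, is bounded below by a positive constant on $\Omega\setminus V$, and satisfies $|\dbar\wt d|^2\le 1/4+\eta$ globally. The resulting change in $f_N$ is confined to $\Omega\setminus V$, where $1/d$ is bounded and the Hessian of $\wt d$ is bounded, so the extra contributions are harmless zeroth-order terms, absorbable into the $A\|f_N\|^2$ term in \eqref{8} and the $C_\eps\|f\|^2$ term in \eqref{8a}.
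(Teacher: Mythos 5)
Your proposal is correct and takes essentially the same approach as the paper: apply a Brezis--Marcus Hardy inequality to each (Dirichlet) coefficient of $f_N$, convert $\|\nabla f_N\|^2$ to $4\,Q_\Omega(f_N,f_N)$ by integration by parts (which is valid without pseudoconvexity since $f_N^K\in W^1_0$), and for \eqref{8a} expand $\partial f_N^K/\partial\bar z_j$ into a leading term controlled by $\sum_l|\partial d/\partial z_l|^2=1/4$ plus a Hessian remainder, then use the multiplicity-$q$ combinatorial identity and the Morrey--Kohn--H\"ormander basic estimate on pseudoconvex $\Omega$. The paper handles the cut-locus issue the same way you do, by replacing $d$ with a $C^2$ surrogate agreeing with $d$ near $\partial\Omega$.
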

 \begin{proof} For functions in $W^1_0(\Omega)$, inequality \eqref{8} was proved in \cite{BrezisMarcus97}. We first provide a proof for functions, using only the divergence theorem. We can assume that $f\in C^\infty_0(\Omega)$.  Replacing $d(z)$ by a function that is identical to $d(z)$ in a neighborhood of $\partial\Omega$ and $C^2$ inside $\Omega$, we may assume that $d\in C^2(\overline{\Omega})$. Then
     \begin{equation}\label{eq:h1}
 	 \begin{aligned}
 	 &\int_{\omz}|\nabla f|^2 dV=\int_{\omz}\left| d^{1/2}\nabla (d^{-1/2}f)+\frac{\nabla d}{2d}f\right|^2 dV\\
 	 &=\int_{\omz} d|\nabla(d^{-1/2}f)|^2 dV+\frac{1}{2}\langle\nabla |d^{-1/2}f|^2, \nabla d\rangle_\omz+\frac{1}{4}\int_{\omz}\frac{|\nabla d|^2}{d^2}|f|^2 dV.\\
 	 \end{aligned}
 	 \end{equation}
 	 Let $g=d^{-1/2}f$. Note that $g\in C^\infty_0(\omz)$. By the divergence theorem, we have
 	 \begin{equation}
 	 \begin{aligned}
 	 0=\int_\omz \nabla\cdot (|g|^2 d\nabla d)\, dV&=\int_\omz \big(\nabla |g|^2\cdot (d\nabla d)+ |g|^2 |\nabla d|^2+ |g|^2 d\nabla^2 d \big)\, dV.
 	 \end{aligned}
 	 \end{equation}
 	 Thus,
 	 \begin{equation}
 	 \begin{aligned}
 	 \int_\omz |g|^2 |\nabla d|^2\, dV&=-2\rea\int_\omz \bar g\nabla g\cdot d(\nabla d)\,dV-\int_\omz |g|^2 d(\nabla^2d)\, dV\\
 	 &\le \eps\int_\omz d|\nabla g|^2dV+\dfrac{1}{\eps}\int_\omz d|\nabla d|^2|g|^2dV+\int_\omz d|\nabla^2d||g|^2dV\\
 	 &\le \eps\int_\omz d|\nabla g|^2dV+C_\eps\int_\omz d|g|^2dV.
 	 \end{aligned}
 	 \end{equation}
 	 Note that $|\nabla d|=1$ near $\partial\omz$. The middle term in the last expression of \eqref{eq:h1} is under controlled as above.  Thus by choosing $A>0$ sufficiently large, we obtain the following version of Hardy's inequality:
 	 \begin{equation}\label{11}
 	 \begin{aligned}
 	 \int_{\omz}\frac{|f|^2}{d^2} dV\le 4\int_{\omz}|\nabla f|^2 dV+A\|f\|^2.
 	 \end{aligned}
 	 \end{equation}
 	 The above inequality holds for all $f\in W^1_0(\Omega)$ as $C^\infty_0(\Omega)$ is dense in $W^1_0(\Omega)$. 
 	 
     Since $C^1_{(0, q)}(\overline{\Omega})\cap\Dom(Q_\Omega)$ is dense in $\Dom(Q_\Omega)$ in graph norm, it suffices to prove \eqref{8} for $f\in C^1_{(0,q)}(\bar\omz)\cap\Dom(Q_\Omega)$. Notice that
     $$
     f_N=(\dbar d(z))^*\lrcorner f= \sumprime_{|K|=q-1}f_N^Kd\bar{z}_K \quad \text{where}\quad 
     f_N^K=\sum_{j=1}^{n}\frac{\partial d(z)}{\partial z_j}f_{jK}.
     $$ 
     Since $f\in\Dom(\dbarstar_\Omega)$, it follows that $f_N^K=0$ on $\partial\omz$ and hence $f_N^K\in W^1_0(\omz)$ for any strictly increasing $(q-1)$-tuple $K$. Moreover, 
     \begin{equation}\label{12}
 	 \begin{aligned}
 	 Q_\Omega(f_N,f_N)=\sumprime_{|K|=q-1}\sum_{j=1}^n \int_\Omega \left|\frac{\partial f_N^K}{\partial \bar z_j}\right|^2\, dV=\dfrac{1}{4}\|\nabla f_N\|^2_\omz.
 	 \end{aligned}
 	 \end{equation}
 	 We then obtain (\ref{8}) by combining (\ref{11}) and (\ref{12}). 

     To establish \eqref{8a}, we note that
     \begin{equation}\label{12a}
     \begin{aligned}
     Q_\Omega(f_N,f_N)=\sumprime_{|K|=q-1}\sum_{j=1}^n\int_\Omega \left|\sum_{l=1}^n \frac{\partial d(z)}{\partial z_l}\frac{\partial f_{lK}}{\partial \bar z_j} +\sum_{l=1}^n \frac{\partial^2 d(z)}{\partial z_l\partial \bar z_j} f_{lK}\right|^2\, dV. 
     \end{aligned}
     \end{equation}
     The desirable inequality then follows from a simple use of the Cauchy-Schwarz inequality and the facts that $\sum_{l=1}^n |\partial d (z)/\partial z_l|^2=1/4$ near $\partial\Omega$ and 
     \[
     Q_\Omega(f, f)\ge\sumprime_{|J|=q}\sum_{j=1}^n \int_\Omega \left|\frac{\partial f_J}{\partial \bar z_j}\right|^2\, dV=\frac{1}{q}\sumprime_{|K|=q-1}\sum_{j=1}^n \int_\Omega \left|\frac{\partial f_{jK}}{\partial \bar z_j}\right|^2\, dV.
     \]
\end{proof}

The following lemma is a direct consequence of Lemma~\ref{Hardy1} and a theorem of Davies
\cite[Theorem4]{Davies00}. We sketch the proof for the reader's convenience.
	 
 	 \begin{lem}\label{Hardy2} Let $\Omega$ be a smooth bounded pseudoconvex domain of finite type in $\C^n$.  If $f$ is an eigenform for the $\dbar$-Neumann Laplacian with associated eigenvalue $\lambda(\Omega)$, then there exist constants  $0<\alpha<1$ and $C>0$ such that
 	 	\begin{equation}\label{13}
 	 	\|f_N\|_{A_\delta}\le C\delta^{3/2}(1+\lambda(\omz))^{\frac{1}{2}[\frac{1}{2\alpha}]+\frac{3}{4}}\|f\|,
 	 	\end{equation} 
 	 	where $A_\delta=\{z\in\omz | \dist(z,\partial\omz)<\delta\}$.
 	 \end{lem}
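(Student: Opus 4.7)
The plan is to combine the Hardy inequality of Lemma~\ref{Hardy1} with Davies's sharp boundary estimate for eigenfunctions (\cite[Theorem~4]{Davies00}) and the iterated subelliptic regularity of Lemma~\ref{lem15}.

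I first apply Lemma~\ref{Hardy1} to the normal component $f_N$. Since $f$ is an eigenform we have $Q_\Omega(f,f)=\lambda(\Omega)\|f\|^2$, so the second part \eqref{8a} of Lemma~\ref{Hardy1} gives $Q_\Omega(f_N,f_N)\le C(1+\lambda(\Omega))\|f\|^2$. Substituting into \eqref{8} produces the weighted $L^2$-bound
$$\int_\Omega \frac{|f_N|^2}{d(z)^2}\,dV \le C(1+\lambda(\Omega))\|f\|^2.$$
By itself this would only give $\|f_N\|_{A_\delta}\le C\delta(1+\lambda(\Omega))^{1/2}\|f\|$, missing a factor of $\delta^{1/2}$ and using too small an exponent on $(1+\lambda(\Omega))$.

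To upgrade the $\delta$-exponent from $1$ to $3/2$, I invoke Davies's Theorem~4 of \cite{Davies00} with $u=f_N$. That theorem combines a Hardy-type weighted $L^2$-bound of the kind just derived with a pointwise (or higher-regularity) bound on $u$ to extract the sharp decay $\|u\|_{L^2(A_\delta)}\le C\delta^{3/2}G(\lambda,\cdots)$. The pointwise input is supplied by Lemma~\ref{lem15}: via iterated application of Catlin's subelliptic gain of $2\alpha$ derivatives, $f$ (and hence $f_N=(\bar\partial d)^*\lrcorner f$) enjoys $C^l$-regularity controlled by a power of $(1+\lambda(\Omega))$ proportional to $[\frac{1}{2\alpha}]+1$ iterations. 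Davies's interpolation then mixes the Hardy factor $(1+\lambda(\Omega))^{1/2}$ with the regularity factor $(1+\lambda(\Omega))^{[\frac{1}{2\alpha}]+1}$ in proportions that yield exactly the exponent $\frac{1}{2}[\frac{1}{2\alpha}]+\frac{3}{4}$ and the $\delta^{3/2}$ rate in the final estimate.

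The main obstacle is recovering the sharp $\delta^{3/2}$ decay: a naive geometric-mean combination of the Hardy bound (decay $\delta^1$) with an $L^\infty$-bound (decay $\delta^{1/2}$) only yields $\delta^{3/4}$. Davies's theorem achieves the improvement by iterating the Hardy inequality along the normal direction together with the eigenvalue equation, effectively converting pointwise regularity into decay in the normal coordinate. Translating Davies's Dirichlet-Laplacian argument to the $\dbar$-Neumann setting via the substitution $u=f_N$ is straightforward: the boundary-vanishing hypothesis $u|_{\partial\Omega}=0$ is the $\dbar$-Neumann condition $(\bar\partial\rho)^*\lrcorner f=0$ that comes from $f\in\Dom(\dbarstar)$, so all the inputs required by \cite[Theorem~4]{Davies00} are in place.
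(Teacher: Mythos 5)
Your outline assembles the same ingredients as the paper's proof---Davies's boundary estimate applied to $u=f_N$, Lemma~\ref{Hardy1} for the first-order factor, Lemma~\ref{lem15} for the higher-order factor, and the observation that $f\in\Dom(\dbarstar_\Omega)$ forces $f_N$ to vanish on $\partial\Omega$---but the way you describe the black box leaves the decisive step unproved. The statement of \cite[Theorem~4]{Davies00} that is actually used is not an interpolation between a weighted Hardy bound and a pointwise bound; it is the inequality $\|f_N\|^2_{A_\delta}\le C\delta^3\,\|(\Delta^D+A)f_N\|\,\|(\Delta^D+A)^{1/2}f_N\|$ for $f_N$ (componentwise) in the domain of the Dirichlet Laplacian. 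Having cited it, you owe two estimates. The first-order one your step 1 essentially supplies, since $\|(\Delta^D+A)^{1/2}f_N\|^2=\|\nabla f_N\|^2+A\|f_N\|^2=4Q_\Omega(f_N,f_N)+A\|f_N\|^2\le C(1+\lambda(\Omega))\|f\|^2$ by \eqref{8a}. But the second-order factor $\|(\Delta^D+A)f_N\|$ is never estimated in your proposal, and this is precisely where the eigenvalue equation and Lemma~\ref{lem15} enter: since $\Box$ acts componentwise as $-\tfrac14\Delta$ in the interior, $\Delta f_{jK}=-4\lambda(\Omega)f_{jK}$, so expanding $\Delta\bigl(\sum_j (\partial d/\partial z_j) f_{jK}\bigr)$ gives $\|\Delta f_N\|\le C\bigl(\lambda(\Omega)\|f_N\|+\|\nabla f\|+\|f\|\bigr)$ with $C$ controlled by the $C^3$-norm of $d$, and then $\|\nabla f\|\le C(1+\lambda(\Omega))^{[\frac{1}{2\alpha}]+1}\|f\|$ by the Sobolev estimate \eqref{28b} with $2m\alpha\ge 1$. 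Without this step the $\delta^{3/2}$ rate and the stated power of $(1+\lambda(\Omega))$ are only asserted, not derived; your own remark that a naive Hardy--$L^\infty$ combination gives $\delta^{3/4}$ shows the gap is real.

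A related inaccuracy: you attribute the power $[\frac{1}{2\alpha}]+1$ to the $C^l$-regularity of Lemma~\ref{lem15}, but the sup-norm bound \eqref{28} carries the exponent $[\frac{n+l}{2\alpha}]+1$, which depends on the dimension; used literally it would not give ``exactly'' the exponent in \eqref{13} (it would still prove the lemma after shrinking the existential $\alpha$, but the exact-agreement claim is unjustified as written). The correct input is the $L^2$-Sobolev estimate \eqref{28b}, which controls $\|\nabla f\|$ with the dimension-free power. With both factors in place, Davies's inequality yields $\|f_N\|^2_{A_\delta}\le C\delta^3(1+\lambda(\Omega))^{[\frac{1}{2\alpha}]+\frac32}\|f\|^2$, whose square root is \eqref{13}; this is the paper's argument, so your route is the same in outline but has a genuine gap at the second-order estimate and a mis-citation of which regularity estimate produces the exponent.
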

  
  \begin{proof}  It follows from \cite[Theorem 4]{Davies00} that  
 	 \begin{equation}\label{9}
 	 \|f_N\|_{A_\delta}^2\leq C\delta^{3}\|(\Delta^D+A)f_N\|_\omz\|(\Delta^D+A)^{1/2}f_N\|_\omz,
 	 \end{equation}
 	 where $\Delta^D$ is the Dirichlet Laplacian, acting componentwise on $f_N$. Note
 	 that
 	 \[
 	 \|(\Delta^D+A)^{1/2}f_N\|^2=\|\nabla f_N\|^2+A\|f_N\|^2=4Q_\Omega(f_N, f_N)+A\|f_N\|^2.
 	 \]
 	 Thus by Lemma~\ref{Hardy1}, this term is dominated by a constant multiple of 
 	 $$
 	 Q_\Omega(f, f)+\|f\|^2=(1+\lambda(\Omega))\|f\|^2.
 	 $$
 	
 	 To estimate the term $\|(\Delta^D+A)f_N\|_\Omega$, we observe that 
 	 \begin{equation}
 	 \begin{aligned}
 	 \|\Delta^D f_N\|^2_\omz &= \|\nabla^2 f_N\|^2_\omz=\Bigg\|\nabla^2 \mathop{{\sum}'}\limits_{|K|=q-1}\left(\sum_{j=1}^{n}\frac{\partial d(z)}{\partial z_j}f_{jK}\right)d\bar{z}_K\Bigg\|^2_\omz\\
 	 &\le C\left(\|f\|^2+\|\nabla f\|^2+\|\lambda(\omz)f_N\|^2\right)\\
 	 &\le C (1+\lambda(\Omega))^{2\big[\frac{1}{2\alpha}\big]+2}\|f\|^2,
 	 \end{aligned}
 	 \end{equation}
 	 for some constant $C$ depending on the $C^3$-norm of the defining function. Here in the last inequality above we have used \eqref{28b}. Combining the above estimates, we then obtain the desired estimate \eqref{13}.
 \end{proof}

We are now in position to prove Theorem~\ref{prop10}.

\begin{proof}[Proof of Theorem~\ref{prop10}]		
	Let $f$ be a normalized $(0, q)$-eigenform of $\Box_\Omega$ associated with eigenvalue $\lambda(\omz)$. Since $\Omega$ is of finite type, $f\in C^\infty_{(0, q)}(\overline{\Omega})$.  Let $d(z)=\dist(z,\partial\omz)$ and let $\eta_{\delta_j}(z)=\chi\left(d(z)/\delta_j\right)$ where $\chi$ is a smooth function such that $\chi(t)=0$ if $t<1$, $\chi(t)=1$ if $t>2$, and $0\le \chi'(t)\le 1$. Then $|\nabla\eta_{\delta_j}|\le 1/\delta_j$, and $\supp\eta_{\delta_j}\subset\omz$ provided $\delta_j$ is sufficiently small.  Set $f_{\delta_j}(z)=\eta_{\delta_j}(z)\widetilde{f}(z)$. Then $\supp f_{\delta_j}\subset\subset\Omega_j$ and hence $f_{\delta_j}\in\Dom(\dbarstar_{\omz_j})$. 
	
	Let $\ec \colon W^s(\Omega)\to W^s(\C^n)$ be a continuous extension operator. Recall that the norm of this operator depends only
	on $n$, $s$, and the Lipschitz constant of $\Omega$ (\cite[Ch~VI.3, Theorem~5]{Stein70}).
	We have
	\begin{equation}\label{ec1}
	\|f_{\delta_j} -\ec f\|^2_{\Omega_j}\le \int_{A_{j, 3\delta_j}} |\ec f|^2\, dV\le C\delta_j,
	\end{equation}
	where, as before, $A_{j, 3\delta_j}=\{z\in\Omega_j \mid \dist(z, \partial\Omega_j)\le 3\delta_j\}$. We have
	$$\vartheta f_{\delta_j}=\eta_{\delta_j}\vartheta f+\delta_j^{-1}\chi'(d(z)/\delta_j) f_N,$$
	where
	$$
	f_N=(\dbar d(z))^*\lrcorner f= \sumprime_{|K|=q-1}f_N^K(z)\, d\bar{z}_K \quad \text{and}\quad 
	f_N^K(z)=\sum_{j=1}^{n}\frac{\partial d(z)}{\partial z_j}f_{jK}(z).
	$$ 
	Note that since $f\in \Dom(\dbarstar_{\omz})$, $f^K_N(z)=0$ on $\partial\Omega$.  It follows from Lemma~\ref{lem15} and Lemma~\ref{Hardy2} that
	\begin{align*}
	\|\vartheta f_{\delta_j}\|_{A_{2\delta_j}}\le \|\vartheta f\|_{A_{2\delta_j}}+\delta_j^{-1}\|f_N\|_{A_{2\delta_j}}
	\le C\delta^{1/2}_j 
	\end{align*}
	and
	\begin{equation}\label{52}
	\begin{aligned}
	&\|\vartheta f_{\delta_j}-\vartheta f\|_{\omz_j}\le \|(\eta_{\delta_j}-1)\vartheta f\|_{\omz_j}+\delta_j^{-1}\|\chi'(d/\delta_j)f_N\|_{\omz_j}\\
	&\qquad\le \|\vartheta f\|_{A_{2\delta_j}}+\delta_j^{-1}\|f_N\|_{A_{2\delta_j}}\le C\delta_j^{1/2}.
	\end{aligned}
	\end{equation}
	
	  Define
	$T_j\colon \Dom(\Box_\Omega)\to \Dom(Q_j)$  by
	\begin{equation}\label{ft3}
	T_j f=\dbarstar_j N_j\dbar_j \ec f+\dbar_j N_j\dbarstar_j f_{\delta_j}=\dbarstar_j\dbar_j N_j \ec f+\dbar_j \dbarstar_jN_j f_{\delta_j}.
	\end{equation}	
 Then
	\begin{equation}\label{f1}
	\begin{aligned}
	\|T_j f\|^2_{\Omega_j}&=\|\dbarstar_j \dbar_j N_j \ec f\|^2_{\Omega_j}+\|\dbar_j \dbarstar_j N_j  f_{\delta_j}\|^2_{\Omega_j} \\
	&=\|f_{\delta_j}\|^2_{\Omega_j}+\|\dbarstar_j\dbar_j N_j \ec f\|^2_{\Omega_j}-\|\dbarstar_j \dbar_j N_j  f_{\delta_j}\|^2_{\Omega_j}.
	\end{aligned}
	\end{equation}	
	Notice that
	\begin{equation}\label{f2}
	\begin{aligned}
	\|\dbarstar_j\dbar_j N_j f_{\delta_j}\|^2_{\Omega_j}&=\|\dbarstar_j \dbar_jN_j (f_{\delta_j}-\ec f)+\dbarstar_j\dbar_j N_j\ec f\|^2_{\Omega_j} \\
	&=\|\dbarstar_j\dbar_j N_j (f_{\delta_j}-\ec f)\|^2_{\Omega_j}+\|\dbarstar_j\dbar_j N_j \ec f\|^2_{\Omega_j}\\
	&\qquad +2\rea \langle \dbarstar_j \dbar_j N_j (f_{\delta_j}-\ec f), \, \dbarstar_j\dbar_j N_j \ec f\rangle_{\Omega_j}.\\
	\end{aligned}
	\end{equation}
	The first term on the right hand side above is estimated by
	\begin{equation}\label{ec1a}
\|\dbarstar_j\dbar_j N_j (f_{\delta_j}-\ec f)\|^2_{\Omega_j}\le \|f_{\delta_j}-\ec f\|^2_{\Omega_j}\le C\delta_j.
	\end{equation}
We also have
	\[
	\big|\langle \dbarstar_j \dbar_j N_j (f_{\delta_j}-\ec f), \, \dbarstar_j\dbar_j N_j \ec f\rangle_{\Omega_j}\big|=\big|\langle f_{\delta_j}-\ec f, \, \dbarstar_j\dbar_j N_j \ec f\rangle_{\Omega_j}\big|,
	\]
	which is estimated from above by
	\begin{equation}\label{ec2}
	\big|\langle f_{\delta_j}-\ec f, \, \dbarstar_j\dbar_j N_j \ec f\rangle_{\Omega_j}\big|\le C\|f_{\delta_j}-\ec f\|_{A_{j, 3\delta_j}}\|\dbarstar_j\dbar_j N_j \ec f\|_{A_{j, 3\delta_j}}.
	\end{equation}
When there is no finite type assumption on $\Omega_j$, since
\[
\|\dbarstar_j\dbar_j N_j \ec f\|_{A_{j, 3\delta_j}}\le \|\dbarstar_j\dbar_j N_j \ec f\|_{\Omega_j}\le \|\ec f\|_{\Omega_j}\le C,
\]
it follows from \eqref{ec1} and \eqref{ec2} that
\begin{equation}\label{ec3}
\big|\langle f_{\delta_j}-\ec f, \, \dbarstar_j\dbar_j N_j \ec f\rangle_{\Omega_j}\big|
\le C\delta^{1/2}_j.
\end{equation}
It then follows from \eqref{f1}, \eqref{f2}, \eqref{ec1a}, and \eqref{ec3} that in this case, we have
\begin{equation}\label{ec4}
\left| \|T_jf\|^2_{\Omega_j} -\|f\|^2_{\Omega}\right| \le C\delta^{1/2}_j.
\end{equation}

Under the uniform finite type assumption on  $\Omega_j$, by  Catlin's subelliptic estimate, $N_j \ec f$ is smooth and its $C^2$-norm is bounded from above by a constant independent of $j$ (see Lemma~\ref{lem15} and the subsequent remark above). It follows that 
\begin{equation}\label{ec5}
	\big|\langle f_{\delta_j}-\ec f, \, \dbarstar_j\dbar_j N_j \ec f\rangle_{\Omega_j}\big|
	\le C\delta_j.
\end{equation}
Thus, in this case, we have
\begin{equation}\label{ec4b}
	\left| \|T_jf\|^2_{\Omega_j} -\|f\|^2_{\Omega}\right| \le C\delta_j.
\end{equation}

On the one hand, since
	\[
	\|\dbarstar_j T f\|_{\Omega_j}^2=\|\dbarstar_j \dbar_j N_j \dbarstar_j f_{\delta_j}\|_{\omz_j}^2=\|\dbarstar_j f_{\delta_j}\|_{\omz_j}^2,
	\]
	we have
	\[
	\left| \|\dbarstar_j T_j f\|_{\omz_j}^2-\|\dbarstar f\|_{\omz}^2\right|
	\le \int_\Omega (1-\eta_{\delta_j}^2) |\vartheta f|^2+2|\eta_{\delta_j}\vartheta f|\cdot|\delta^{-1}_j \chi'(d(z)/\delta_j) f_N|+ \delta^{-2}_j |f_N|^2\, dV.
	\]
	By Lemma~\ref{Hardy2} and the fact that the $C^1$-norm of $f$ is bounded from above (see Lemma~\ref{lem15}), we have
	\begin{equation}\label{ec6}
	\left| \|\dbarstar_j T_j f\|_{\omz_j}^2-\|\dbarstar f\|_{\omz}^2\right| \le C\delta_j.
	\end{equation}
	On the other hand, since $\dbar_j T_j f=\dbar \ec f$, we have
	\begin{equation}\label{ec7}
	|\|\dbar_j T_jf\|_{\Omega_j}^2-\|\dbar f\|^2_\Omega|\le \int_{\Omega_j\setminus\Omega} |\dbar \ec f|^2\, dV+\int_{\Omega\setminus\Omega_j} |\dbar f|^2\, dV \le C\delta_j.
	\end{equation}
Thus
\begin{equation}\label{ec8}
|Q_{\Omega_j}(T_j f, T_j f)-Q_\Omega(f, f)|\le C\delta_j.	
\end{equation}
When \eqref{ec8} is coupled with \eqref{ec4}, we then obtain from Lemma~\ref{first} the inequality \eqref{ft-1}. When it is coupled with \eqref{ec4b}, we obtain the second inequality in \eqref{ft-2}. The first inequality of \eqref{ft-2} is proved similarly and is left to the interested reader.
\end{proof}

\begin{remark} The power of $\delta_j$ in \eqref{ft-2} is sharp. For example, let $\B$ be the unit ball in $\C^n$. Then $\lambda_k^q(r_j\B)=\lambda_k^q(\B)/r^2_j$. Thus
	\[
	\big|\lambda_k^q(r_j\B)-\lambda_k^q (\B)\big|=(1-r_j)(1+r_j) \lambda_k^q(\B)/r^2_j\approx \delta_j
	\]
	as $\delta_j=1-r_j\to 0$.
\end{remark}
\section{Resolvent convergence}\label{sec:re}

Let $\Omega$ be a bounded domain in $\C^n$. We consider $L^2_{(0, q)}(\Omega)$ be a subspace of $L^2_{(0, q)}(\C^n)$ consisting of forms vanishing outside $\Omega$. For $\lambda\in\C\setminus\R$,  
we extend the resolvent operator $R_\lambda(\square_\Omega)=(\lambda I-\square_\Omega)^{-1}$ to act on $L^2_{(0,q)}(\C^n)$ by setting $R_\lambda(\square_\Omega)=0$ on $L^2_{(0, q)}(\C^n)\ominus L^2_{(0, q)}(\Omega)$.  Let $\Omega_j$ and $\Omega$ be bounded domains in $\C^n$, we say that $\square_{\Omega_j}$ converges to $\square_\Omega$ in strong (respectively in norm) resolvent sense if for all $\lambda\in\C\setminus\R$, $R_\lambda (\square_{\Omega_j})$ converges strongly (respectively in norm) to $R_\lambda(\square_\Omega)$ as operators acting on $L^2_{(0, q)}(\C^n)$. When $\Omega$ and $\omz_j$ are pseudoconvex, we will extend $N_\Omega=\square^{-1}_\Omega$ and $N_{\Omega_j}=\square^{-1}_{\Omega_j}$  to act on $L^2_{(0, q)}(\C^n)$ in a likewise manner.
Since the spectra of $\Box_{\Omega_j}$ and $\Box_{\Omega}$ are uniformly bounded away from $0$, it is easy to see that $\Box_{\Omega_j}$ converges to $\Box_{\Omega}$ in strong resolvent sense if $N _j$ converges to $N$ strongly on $L^2_{(0, q)}(\C^n)$ (see, e.g., \cite[Theorem VIII.9]{ReedSimon80}). Here, as before, to economize the notation, we write $N_{\omz_j}$ and $N_\omz$ simply as $N_j$ and $N$ respectively.

\begin{thm}\label{th:re}
   $\omz$ is a bounded pseudoconvex domain in $\C^n$ with $C^1$ boundary. Let $\{\omz_j\}_{j\in\nb}$ be a sequence of bounded pseudoconvex domain such that $\delta_j=d_H(\omz,\omz_j)\to0$ as $j\to\infty$. Then $\Box_{\omz_j}$ converge to $\Box_\omz$ in strong resolvent sense.
\end{thm}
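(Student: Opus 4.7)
The authors have reduced the problem to showing $N_j\to N$ strongly on $L^2_{(0,q)}(\C^n)$, so I would focus on this. Fix $f\in L^2_{(0,q)}(\C^n)$, write $u_j=N_jf$ and $u=Nf$ under the extension-by-zero convention, and note $\supp u_j\subset\Omega_j$, $\supp u\subset\Omega$. Enclosing every $\Omega_j$ and $\Omega$ inside a common ball $B(z_0,R)$ and applying Lemma~\ref{P} to $b(z)=(|z-z_0|^2-R^2)/R^2$ (whose complex Hessian is $R^{-2}I$) yields the uniform lower bound $\lambda_1^q(\Omega_j)\ge q/(eR^2)=:c_0>0$. Combined with $Q_{\Omega_j}(u_j,u_j)=\langle f,u_j\rangle_{\Omega_j}$, this gives $\|u_j\|_{L^2(\C^n)}\le c_0^{-1}\|f\|$ and $Q_{\Omega_j}(u_j,u_j)\le c_0^{-1}\|f\|^2$ uniformly in $j$.

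By weak compactness, a subsequence (not relabeled) satisfies $u_j\rightharpoonup w$, $\widetilde{\dbar u_j}\rightharpoonup v_2$, and $\vartheta u_j=\widetilde{\dbarstar_j u_j}\rightharpoonup v_1$ in $L^2(\C^n)$, where the last equality uses $u_j\in\Dom(\dbarstar_{\Omega_j})$. Since $\Omega_j\subset\Omega^+_\delta$ eventually for every $\delta>0$, the support of $w$ lies in $\overline\Omega$. Distributional continuity of $\vartheta$ forces $v_1=\vartheta w$, so by the standard characterization $w|_\Omega\in\Dom(\dbarstar_\Omega)$ with $\dbarstar_\Omega(w|_\Omega)=v_1|_\Omega$; testing $v_2$ against $\phi\in\dc_{(0,q+1)}(\Omega)$ (whose support eventually sits in $\Omega_j$) identifies $v_2|_\Omega$ as the distributional $\dbar$ of $w|_\Omega$, so $w|_\Omega\in\Dom(Q_\Omega)$. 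To identify $w|_\Omega$ with $u$, I would take arbitrary $\phi\in\Dom(Q_\Omega)$ and apply the transition operator $T_{\delta_j}\phi\in\Dom(Q_{\Omega_j})$ built in the proof of Theorem~\ref{prop5}; Lemma~\ref{lm:hatcheck} together with the density of $\dc_{(0,q)}(\Omega)$ in $\Dom(\dbarstar_\Omega)$ gives the strong $L^2(\C^n)$-convergences $T_{\delta_j}\phi\to\widetilde\phi$, $\dbar T_{\delta_j}\phi\to\widetilde{\dbar\phi}$, and $\dbarstar_j T_{\delta_j}\phi\to\vartheta\widetilde\phi$. Passing to the limit in
\begin{equation*}
Q_{\Omega_j}(u_j,T_{\delta_j}\phi)=\langle f,T_{\delta_j}\phi\rangle_{\Omega_j}
\end{equation*}
through weak/strong pairings then yields $Q_\Omega(w|_\Omega,\phi)=\langle f,\phi\rangle_\Omega$, and the injectivity of $\Box_\Omega$ on $(0,q)$-forms over pseudoconvex $\Omega$ forces $w|_\Omega=u$. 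Uniqueness of the subsequential weak limit then upgrades this to $u_j\rightharpoonup u$ for the full sequence.

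To upgrade to strong convergence, I would apply the Poincaré-type bound $\|v\|^2_{\Omega_j}\le c_0^{-1}Q_{\Omega_j}(v,v)$ to $v=u_j-T_{\delta_j}u\in\Dom(Q_{\Omega_j})$ and expand
\begin{equation*}
Q_{\Omega_j}(v,v)=\langle f,u_j\rangle_{\Omega_j}-2\re\langle f,T_{\delta_j}u\rangle_{\Omega_j}+Q_{\Omega_j}(T_{\delta_j}u,T_{\delta_j}u).
\end{equation*}
The first summand tends to $\langle f,u\rangle_\Omega=Q_\Omega(u,u)$ by weak convergence of $u_j$, the second tends to $\langle f,u\rangle_\Omega$ by the strong convergence $T_{\delta_j}u\to u$, and the third tends to $Q_\Omega(u,u)$ by Lemma~\ref{lm:hatcheck}; hence $\|u_j-T_{\delta_j}u\|_{L^2(\C^n)}\to 0$, and the triangle inequality with $\|T_{\delta_j}u-u\|_{L^2(\C^n)}\to 0$ delivers the desired strong convergence $u_j\to u$ in $L^2(\C^n)$. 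The main obstacle is the identification step: since extension by zero does not commute with $\dbar$, one cannot simply commute weak limits with $\dbar$ in $L^2(\C^n)$; the limit $v_2|_\Omega$ must instead be recovered distributionally against test forms compactly supported inside $\Omega$, while the dual $\dbarstar$-boundary condition for $w|_\Omega$ is automatically encoded by the $\vartheta$-limit thanks to $u_j\in\Dom(\dbarstar_{\Omega_j})$.
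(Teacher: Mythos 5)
Your argument is correct, but it follows a genuinely different route from the paper's. The paper proves strong convergence directly: writing $g=Nf$ and $f|_\Omega=\dbar\dbarstar g+\dbarstar\dbar g$, it approximates $g$ and $\dbar g$ by compactly supported forms (minimality of $\dbarstar$), pushes them across the boundary with the $\widehat{(\cdot)}$-construction of Lemma~\ref{lm:hatcheck}, and estimates $\|N_j\widetilde{f|_\Omega}-\widetilde g\|_{\Omega_j}$ term by term using the uniform boundedness of $N_j$, $\dbar_jN_j$, $\dbarstar_jN_j$ from H\"ormander's estimates; no compactness or subsequence extraction is needed. You instead run a weak-compactness/energy-convergence scheme: uniform coercivity $\lambda_1^q(\Omega_j)\ge q/(eR^2)$ via Lemma~\ref{P} with $b=(|z-z_0|^2-R^2)/R^2$, extraction of weak limits of $\widetilde u_j$, $\widetilde{\dbar_j u_j}$, $\widetilde{\dbarstar_j u_j}$, identification of the limit as $\widetilde{Nf}$ by testing against the transition forms $T_{\delta_j}\phi$ and using injectivity of $\Box_\Omega$, and then the quadratic expansion of $Q_{\Omega_j}(u_j-T_{\delta_j}u,\,u_j-T_{\delta_j}u)$ to upgrade weak to strong convergence. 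Both proofs lean on the same $C^1$-dependent machinery (the push-out operators and, in your case, also the fact that a form supported in $\overline\Omega$ with distributional $\vartheta$ in $L^2(\C^n)$ restricts to an element of $\Dom(\dbarstar_\Omega)$ --- your ``standard characterization,'' which for $C^1$ boundary can be justified either by density of smooth forms in the graph norm of $\dbar$ or by the paper's push-in construction $\widecheck{(\cdot)}_\delta$ and closedness of $\dbarstar$; this is the one step you should make explicit, since it does use boundary regularity). The paper's route is shorter given Theorem~\ref{prop5} and yields an explicit error estimate for $\|N_jf-Nf\|$ in terms of the approximation data; your route is more modular and robust --- it only needs uniform coercivity plus strong convergence of the transition operators on fixed forms, in the spirit of Mosco-type arguments for form convergence --- at the cost of a subsequence argument and the distributional identification step you correctly flag (never commuting zero-extension with $\dbar$, testing only against forms compactly supported in $\Omega$ for the $\dbar$-component and using $\vartheta\widetilde u_j=\widetilde{\dbarstar_j u_j}$ for the $\dbarstar$-component).
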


\begin{proof}
	The proof follows the same lines of arguments as in Theorem~\ref{prop5}. 
	Let $f\in L^2_{(0,q)}(\C^n)$ and let $g=Nf$. Then $f_\omz:=f|_\omz=\Box_\omz  g=\dbar\dbarstar g+\dbarstar\dbar g$. Since $ g,\dbar g\in \Dom(\dbarstar_\Omega)$, it follows from the minimality of $\dbarstar$ that for any $0<\varepsilon<1$,  there exist $\phi\in\dc_{(0,q)}(\omz)$ and $\varphi\in\dc_{(0,q+1)}(\omz)$ such that 
	\begin{equation}\label{re-1}
	\|\phi- g\|_\omz+\|\dbarstar(\phi- g)\|_\omz+\|\varphi-\dbar g\|_\omz+\|\dbarstar(\varphi- \dbar g)\|_\omz<\varepsilon. 
	\end{equation}	
	By choosing $j$ sufficiently large, we have $\phi\in\dc_{(0,q)}(\omz_{j})$ and  $\varphi\in\dc_{(0,q+1)}(\omz_{j})$. 
	
	As in the proof of Theorem~\ref{prop5}, we will use $\widetilde{F}$ to denote the extension of $F$ to $\C^n$ by letting $\widetilde F=0$ outside of $\omz$ and $\widehat{F}$ to denote the form constructed by \eqref{hat} with $\delta=\delta_j$. Thus $\widehat{\dbarstar g}$, $\widehat g\in \Dom(\dbar_j)$. Since 
	\begin{align*}
		\|N _jf-N  f\|_{\C^n}
		&= \|N _jf-N  f\|_{\omz_{j}}+\|Nf\|_{\omz\setminus\omz_j}\\
		&\le \|N_j\widetilde{f_\omz}-\widetilde{g}\|_{\omz_{j}}+\|N_j (f-\widetilde{f_\omz})\|_{\omz_j}+\|Nf\|_{\omz\setminus\omz_j}\\
		&\le \|N_j\widetilde{f_\omz}-\widetilde{g}\|_{\omz_{j}}+C\| f-\widetilde{f_\omz}\|_{\omz_j}+\|Nf\|_{\omz\setminus\omz_j}		
	\end{align*}	
	and the last two terms above goes to $0$ as $j\to \infty$, it suffices to prove that
	$\|N_j\widetilde{f_\omz}-\widetilde{g}\|_{\omz_{j}}\to 0$.
	We have
	\[
		\|N_j\widetilde{f_\omz}-\widetilde g\|_{\Omega_j}\le\|N_{j}\widetilde{\dbar\,\dbarstar g}-\dbar _j\dbarstar_jN_j\widetilde{g}\|_{\Omega_j}+\|N_{j}\dbarstar\widetilde{\dbar g}-\dbarstar_j\dbar_jN_j\widetilde{g}\|:=I+II.
	\]
	Note that	
		\begin{align*}
			I&= \|\dbar _jN _j\widehat{\dbarstar g}+N_j(\widetilde{\dbar\,\dbarstar g}-\dbar\,\widehat{\dbarstar g})-\dbar _j\dbarstar_jN _j\widetilde{g}\|_{\Omega_j}\\
			&=\|\dbar _jN _j\dbarstar \widetilde{g}+ \dbar _jN _j(\widehat{\dbarstar g}-\widetilde{\dbarstar g})+N_j(\widetilde{\dbar\,\dbarstar g}-\dbar\,\widehat{\dbarstar g})-\dbar _j\dbarstar_jN _j\widetilde{g}\|_{\Omega_j}\\
			&\le\|\dbar _j\dbarstar_jN _j(\phi-\widetilde{g})\|_{\Omega_j}+ \|\dbar _jN _j\dbarstar(\widetilde{g}-\phi)\|_{\Omega_j}\\
			&\qquad + \|\dbar _jN _j(\widehat{\dbarstar g}-\widetilde{\dbarstar g})\|_{\Omega_j}+\|N_j(\widetilde{\dbar\,\dbarstar g}-\dbar\,\widehat{\dbarstar g})\|_{\Omega_j}\\	
			&\le\|\phi-g\|_{\Omega}+ C\|\dbarstar(g-\phi)\|_{\Omega}+ C\|\widehat{\dbarstar g}-\widetilde{\dbarstar g}\|_{\Omega_j}+C\|\widetilde{\dbar\,\dbarstar g}-\dbar\,\widehat{\dbarstar g}\|_{\Omega_j}.\\
		\end{align*}
		It follows from \eqref{re-1} and Lemma~\ref{lm:hatcheck} that $I\to 0$ as $j\to\infty$.  Similarly, we have
		\begin{align*}
		  II&\le \|\dbarstar_j\dbar_jN_j(\widehat{g}-\widetilde{g})\|_{\Omega_j} +\|\dbarstar_jN_j(\widetilde{\dbar g}-\dbar \widehat{g})\|_{\Omega_j}\\
		  &\qquad +\|\dbarstar_jN_j(\varphi-\widetilde{\dbar g})\|_{\Omega_j}+\|N_{j}\dbarstar(\widetilde{\dbar  g}-\varphi)\|_{\Omega_j}\\
		  &\le \|\widehat{g}-\widetilde{g}\|_{\Omega_j} +C\|\widetilde{\dbar g}-\dbar \widehat{g}\|_{\Omega_j}+C\|\varphi-\dbar g\|_{\Omega}+C\|\dbarstar(\dbar  g-\varphi)\|_{\Omega},\\
			\end{align*}
			which again goes to $0$ as $j\to\infty$. Thus $\|N_j\widetilde{f_\omz}-\widetilde{g}\|_{\omz_{j}}\to 0$. \end{proof}

\begin{remark} $(1)$ As for Theorem~\ref{prop1}, Theorem~\ref{th:re} holds without the $C^1$ assumption on $\partial\omz$ if the $\omz_j$'s are contained in $\omz$. $(2)$ One cannot expect that $\Box_{\Omega_j}$ converges to $\Box_\Omega$ in norm resolvent sense. For example, let $\Omega_j$ and $\Omega$ be bounded convex domains in $\C^n$ such that $\Omega$ is exhausted by $\Omega_j$ from inside.  Suppose  $\partial\Omega$ contains a complex analytic variety but $\partial\Omega_j$ does not. Then $N_j$ is compact but $N$ is not $($see \cite{FuStraube98}$)$. Thus in this case, 
$N_j$ does not converges to $N$ in norm.  
\end{remark}

\bigskip

\noindent{\bf Acknowledgments.} Part of this work was done during the visits of the first author to Princeton University, the University of Notre Dame, and Xiamen University, and the second to Rutgers University-Camden, and both to the Erwin Schr\"{o}dinger Institute. The authors
thank these institutions for hospitality. The second author also thanks Professor Chunhui Qiu for his kind encouragement and support. The first author was supported in part by a grant from the National Science Foundation (DMS-1500952). The second author was supported in part by a grant from the National Natural Science Foundation of China (Grant No.~11571288). 



\enddocument

\end